\theoremstyle{plain}
\newtheorem{thm}{Theorem}[section]
\newtheorem{lemma}[thm]{Lemma}
\newtheorem{prop}[thm]{Proposition}
\newtheorem{cor}[thm]{Corollary}
\theoremstyle{definition}
\newtheorem{quest}{Question}[section]
\newtheorem{dfn}[quest]{Definition}
\theoremstyle{remark}
\newtheorem{remark}[thm]{Remark}
\newtheorem{eg}[thm]{Example}
\theoremstyle{plain}
\numberwithin{equation}{section}
\newcommand{\QQ}{\mathbb{Q}}
\newcommand{\RR}{\mathbb{R}}
\newcommand{\ZZ}{\mathbb{Z}}
\newcommand{\cptwo}{\mathbb{CP}^2}
\newcommand{\defeq}{\vcentcolon=}
\DeclareRobustCommand\widecheck[1]{{\mathpalette\@widecheck{#1}}}
\def\@widecheck#1#2{%
	\setbox\z@\hbox{\m@th$#1#2$}%
	\setbox\tw@\hbox{\m@th$#1%
		\widehat{%
			\vrule\@width\z@\@height\ht\z@
			\vrule\@height\z@\@width\wd\z@}$}%
	\dp\tw@-\ht\z@
	\@tempdima\ht\z@ \advance\@tempdima2\ht\tw@ \divide\@tempdima\thr@@
	\setbox\tw@\hbox{%
		\raise\@tempdima\hbox{\scalebox{1}[-1]{\lower\@tempdima\box
				\tw@}}}%
	{\ooalign{\box\tw@ \cr \box\z@}}}
\title{Obstructing Lagrangian concordance for closures of 3-braids}
\author[Wu]{Angela Wu}
\address[A.\ Wu]{Department of Mathematics \\ Louisiana State University \\ Baton Rouge \\ LA \\ U.S.A.}
\email{awu@lsu.edu}
\begin{document}
	
\maketitle

\begin{abstract}
 We show that any knot which is smoothly the closure of a 3-braid cannot be Lagrangian concordant to and from the maximum Thurston-Bennequin Legendrian unknot except the unknot itself. Our obstruction comes from drawing the Weinstein handlebody diagrams of particular symplectic fillings of cyclic branched double covers of knots in $S^3$. We use the Legendrian contact homology differential graded algebra of the links in these diagrams to compute the symplectic homology of these fillings to derive a contradiction. As a corollary, we find an infinite family of contact manifolds which are rational homology spheres but do not embed in $\RR^4$ as contact type hypersurfaces.
\end{abstract}

\section{Introduction}
\label{intro}

Knot concordance was first defined by Fox and Milnor in \cite{FoxMil66} as a way to endow topological knots with a group structure. Two knots are said to be smoothly concordant if they jointly form the boundary of a smooth cylinder in four-dimensional Euclidean space. We study a variant of the problem of concordance defined in the symplectic setting by Chantraine \cite{Cha10} called \emph{Lagrangian concordance}. 

We consider Legendrian knots in $\RR^3$ with the standard contact structure $\ker\alpha$, $\alpha = dz-ydx$. Let $\RR^4$ be the symplectization of $\RR^3$, $\RR_t\times \RR^3$ with the symplectic form $\omega=d(e^t\alpha)$. 

\begin{dfn}
	Let $\Lambda_+,\Lambda_-\subset \RR^3$ be Legendrian knots where $\RR^3$ is equipped with the standard contact structure $\xi=\ker(\alpha)$. A \textit{Lagrangian cobordism from $\Lambda_-$ to $\Lambda_+$} is a Lagrangian $L$ embedded in $\RR^4$ such that
	$ ((-\infty,-T)\times \RR^3) \cap L = (-\infty,-T)\times \Lambda_-$
	and
	$((T,\infty)\times\RR^3) \cap L = (T,\infty)\times\Lambda_+$
	for some $T\geq 0$.
\end{dfn}

\begin{dfn}	
	If a Lagrangian cobordism has zero genus, then we call it a \textit{Lagrangian concordance}. If there is a Lagrangian concordance from $\Lambda_-$ to $\Lambda_+$, we write $\Lambda_-\prec\Lambda_+$ and say that $\Lambda_-$ is \textit{Lagrangian concordant to} $\Lambda_+$.
\end{dfn}

Lagrangian cobordisms between Legendrian submanifolds have been studied in symplectic and contact topology due to their key role in symplectic field theory \cite{EliGiv00}. Indeed, Lagrangian cobordisms were first defined to construct a category whose objects are Legendrian submanifolds and whose morphisms are given by the exact Lagrangian cobordisms. This led to the development of a new invariant by Chekanov and Eliashberg \cite{Che02,Eli98}, a differential graded algebra 
whose homology, called \emph{Legendrian contact homology} gives a functor from the category of Legendrians to the category of these differential graded algebras. The study of Lagrangian cobordisms has also led to the development of the wrapped Fukaya category \cite{FukSei08}. 
With this motivation, a lot of recent work aims to understand the structure and behaviour of the Lagrangian cobordism relation (for instance \cite{BlaLeg20}, \cite{Cha10}, \cite{Cha13}, \cite{CorNg16}, \cite{EkhHon16}, \cite{SabTra13}, \cite{SabVel21}).

In \cite{Cha10}, Chantraine showed that Legendrian isotopic Legendrian knots are Lagrangian concordant, and that Lagrangian concordance can be obstructed by classical Legendrian knot invariants: the rotation number and the Thurston-Bennequin number. Lagrangian concordance is both reflexive and transitive, suggesting that as a relation, it is potentially a partial order on Legendrian knots. In \cite{Cha13}, Chantraine proved that Lagrangian concordance is not symmetric. It is not known if Lagrangian concordance is antisymmetric, ie. if $\Lambda_1\prec\Lambda_2\prec\Lambda_1$, then $\Lambda_1$ is Legendrian isotopic to $\Lambda_2$. Let $U$ denote the $tb=-1$ Legendrian unknot. As a first step to understanding the potential antisymmetry of this relation, we may pose the following question about the simplest case:

\begin{quest}\label{mainquestion} 
Which Legendrian knots $\Lambda$ satisfy $U\prec\Lambda\prec U$ (as in Figure \ref{fig:ulambdau})?
\end{quest}

The answer to this question is not known. In fact, it is not known whether any knot is Lagrangian concordant to $U$, other than $U$ itself. If a Legendrian knot $\Lambda$ satisfies $U\prec\Lambda$, then $U$ can be filled at the negative end by a Lagrangian disk. The result is a Lagrangian filling of $\Lambda$ with genus zero. Boileau and Orevkov showed that any such $\Lambda$ must be \emph{quasipositive} \cite{BoiOre01}. Furthermore, $\Lambda$ must be \emph{smoothly slice}, meaning it bounds a smooth disk in $B^4$. 

Cornwell, Ng, and Sivek \cite{CorNg16} show that no nontrivial $\Lambda$ can have a decomposable Lagrangian concordance to $U$. They also provide many obstructions to and examples of Lagrangian concordance. For example, if $\Lambda$ has at least two normal rulings, then $\Lambda\nprec U$. 

\begin{figure}
	\begin{center}
		\begin{tikzpicture}
			\node[inner sep=0] at (0,0) {\includegraphics[width=12 cm]{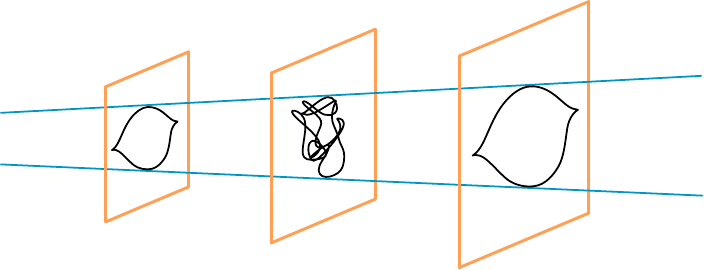}};
			\node at (-0.4,-1.1){$\Lambda$};
			\node at (3.2,-1.25){$U$};
			\node at (-3.5,-0.9){$U$};
			\node at (1.1,-0.1){$C_2$};
			\node at (-2,-0.1){$C_1$};
			\node at (-0.3,-1.75){$S^3$};
			\node at (3.2,-2){$S^3$};
			\node at (-3.4,-1.5){$S^3$};
			\node at (5,-2.5){$\RR_t\times S^3$};
		\end{tikzpicture}
		\caption{A Lagrangian concordance $C_1$ from $U$ to $\Lambda$ glued to a Lagrangian concordance $C_2$ from $\Lambda$ to $U$.}
		\label{fig:ulambdau}
	\end{center}
\end{figure}

In this paper we develop new obstructions to the double Lagrangian concordance $U\prec\Lambda\prec U$ which do not depend on the Legendrian contact homology of $\Lambda$ itself. The main result is that we are able to obstruct the topological knot type of $\Lambda$:
\begin{thm}\label{thm:main}
	Let $U$ be the standard $tb=-1$ unknot. Let $\Lambda$ be a Legendrian knot satisfying $U\prec\Lambda\prec U$, and $\Lambda\neq U$. Then $\Lambda$ cannot be smoothly the closure of a 3-braid.
\end{thm}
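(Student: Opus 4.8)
The plan is to combine the strong constraints coming from $U \prec \Lambda \prec U$ with Murasugi's classification of conjugacy classes of $3$-braids, handling each class by whatever technique is sharpest for it. First I would assemble the topological and symplectic consequences of the double concordance. Since $U \prec \Lambda$, the work of Boileau--Orevkov forces $\Lambda$ to be quasipositive and smoothly slice; in particular the closure of the $3$-braid must have Seifert genus compatible with sliceness, so its Alexander polynomial must satisfy the Fox--Milnor condition $\Delta(t) \doteq f(t)f(t^{-1})$, and its signature must vanish. Symmetrically, $\Lambda \prec U$ gives a genus-zero Lagrangian filling of $U$ from above, which by Chantraine's computation pins down the classical invariants: the Thurston--Bennequin and rotation numbers of $\Lambda$ must match those of $U$, so $tb(\Lambda) = -1$ and $r(\Lambda) = 0$, and the underlying smooth knot has maximal $tb$ equal to $-1$. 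These numerical constraints alone already eliminate the positive and negative braid families and most torus-link closures.

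Next I would run through Murasugi's list. Murasugi's theorem presents every $3$-braid, up to conjugacy, as one of a few normal forms: powers of the full twist $(\sigma_1\sigma_2)^{3d}$ times either $\sigma_1\sigma_2$, or a product $\sigma_1^{-a_1}\sigma_2^{b_1}\cdots$, or certain small exceptional words. For each normal form I would compute the candidate knot type and test it against the obstructions above. The slice-Bennequin inequality together with quasipositivity should knock out all braids with a nontrivial full twist contribution, since a positive full twist raises the writhe and hence $\overline{tb}$ beyond $-1$; the Fox--Milnor and signature obstructions dispose of the alternating and negative pieces. The expectation is that every class collapses except for one stubborn infinite family of braids whose closures pass all the classical filters simultaneously.

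For that remaining family the classical invariants are insufficient, and here I would bring in the deeper symplectic machinery flagged in the abstract. The idea is to pass to the branched double cover $\Sigma(\Lambda)$ of $S^3$ along $\Lambda$: a genus-zero Lagrangian filling of $\Lambda$ lifts to an exact symplectic filling of the contact manifold $\Sigma(\Lambda)$, so if $U \prec \Lambda \prec U$ holds, then $\Sigma(\Lambda)$ must admit such a filling with prescribed properties (in fact it must embed in $\RR^4$ as a contact-type hypersurface). I would then construct explicit Weinstein handlebody diagrams for the natural candidate fillings of these branched double covers, read off the Legendrian links, and use the Legendrian contact homology differential graded algebra to compute the symplectic homology $SH^*$ of the filling. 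The contradiction comes from comparing this computed $SH^*$ against what an honest filling-from-both-sides would force; the vanishing or nonvanishing of symplectic homology is the obstruction.

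The hard part will unquestionably be this last step: the symplectic homology computation via the LCH differential graded algebra of the Weinstein diagram. Producing the correct handlebody picture of the filling of $\Sigma(\Lambda)$, ensuring the Legendrian attaching link is in a form where the DGA and its linearized homology are computable for the whole infinite family uniformly (rather than knot-by-knot), and then extracting $SH^*$ through the Bourgeois--Ekholm--Eliashberg surgery formula, is a substantial and delicate calculation. Everything before it is, by comparison, bookkeeping against a classification; the genuine mathematical obstacle is turning the one surviving family into a clean algebraic contradiction in symplectic homology.
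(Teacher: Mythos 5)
Your proposal follows the same overall architecture as the paper: Boileau--Orevkov quasipositivity plus sliceness, Murasugi's classification, smooth obstructions for most normal forms, and then branched double covers, Weinstein handlebody diagrams, and a symplectic homology computation via the Legendrian contact homology DGA for the one surviving family. The differences lie in the middle reduction, where one of your claims is wrong as stated. The paper disposes of Murasugi type 2 words by observing their closures are multi-component links (so not knots at all), of type 3 words by showing they are positive or negative braids (positive $\Rightarrow$ strongly quasipositive $\Rightarrow$ not slice by Kronheimer--Mrowka), and then cuts down the type 1 family not with classical filters but with a contact-geometric one: Gompf's $d_3$ invariant of the contact branched double cover, computed using the Stein filling embedded in $B^4$ from Theorem \ref{thm:contact_hypersurface}, forces $\sum a_i = n+4$, and only then does signature vanishing give $d=1$, hence algebraic length $2$. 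Your assertion that slice-Bennequin ``knocks out all braids with a nontrivial full twist contribution'' is false: the surviving family retains a positive full twist, $d=1$ (e.g.\ $m(8_{20})$, the closure of $\sigma_1\sigma_2^3\sigma_1\sigma_2^{-3}$, survives every filter and is handled by the symplectic machinery). Your classical route can nevertheless be repaired: a quasipositive $3$-braid written as a product of $k$ conjugated positive generators bounds a Bennequin surface of Euler characteristic $3-k$, which by Rudolph (via Kronheimer--Mrowka) realizes $\chi_4$ of the closure; sliceness then forces $k=2$, i.e.\ algebraic length $2$, reaching the same family as the paper's $d_3$ argument by purely classical means.

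Two further points you leave implicit are essential to closing the argument. First, the filling $X_\Lambda$ whose symplectic homology is computed must be shown to embed in $B^4$ itself, not merely in a blow-up of $B^4$; the paper establishes this with a Mayer--Vietoris computation showing $H_2$ of the complementary cobordism $V$ vanishes, so the glued manifold is minimal and hence is $B^4$. Second, the final contradiction is not a bare comparison of ``computed versus forced'' symplectic homology: it is McLean's theorem (resting on Viterbo's transfer map) that a compact convex symplectic manifold with nonvanishing $S\mathbb{H}$ admits no exact codimension-$0$ embedding into a subcritical Weinstein domain such as $B^4$. Without these two ingredients your last step is only a gesture; with them, your outline matches the paper's proof in substance.
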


The 3-braids are a relatively small class of braids: Murasugi classified representatives for all conjugacy classes of 3-braids \cite{Mur74}. The proof of Theorem \ref{thm:main} involves obstructing the closures of these braids. We prove some obstructions by reframing the problem of Lagrangian concordance as one of fillings of $\Sigma_p(\Lambda')$, the $p$-fold cyclic cover of $S^3$ branched over $\Lambda'$, a transverse push-off of $\Lambda$: 
\begin{thm}\label{thm:contact_hypersurface}
	If $\Lambda$ is a Legendrian knot which satisfies $U\prec\Lambda\prec U$, then any $p$-fold cyclic branched cover $\Sigma_p(\Lambda')$ of $S^3$ branched over the transverse push-off $\Lambda'$ of $\Lambda$ embeds as a contact type hypersurface in $(B^4,\xi_{std})$. Moreover, $\Sigma_p(\Lambda')$ is Stein fillable and has a Stein filling which embeds in $(B^4,\xi_{std})$.
\end{thm}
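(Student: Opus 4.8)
The plan is to exploit the double concordance $U\prec\Lambda\prec U$ to build symplectic branch loci in the ball and then pass to cyclic branched covers. First I would convert Lagrangian cobordisms of Legendrians into symplectic cobordisms of their transverse push-offs: a genus-zero Lagrangian cobordism from $\Lambda_-$ to $\Lambda_+$ gives rise to a symplectic annulus from the transverse push-off $\Lambda_-'$ to $\Lambda_+'$ in the symplectization, of the same Euler characteristic. Applying this to the concordance $C_1$ realizing $U\prec\Lambda$ produces a symplectic annulus $\widehat{C}_1$ from $U'$ to $\Lambda'$, and applying it to $C_2$ realizing $\Lambda\prec U$ produces a symplectic annulus $\widehat{C}_2$ from $\Lambda'$ to $U'$. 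Since the push-off $U'$ of the standard $tb=-1$ unknot is the maximal self-linking transverse unknot, it bounds the standard symplectic (complex) disk $D'$ in $(B^4,\omega_{std})$. Gluing $D'$ below $\widehat{C}_1$ yields a symplectic disk $\Sigma\subset B_1\defeq B^4$ with $\partial\Sigma=\Lambda'$.

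Next I would take the $p$-fold cyclic branched cover. After making $\Sigma$ holomorphic for a suitable compatible almost complex structure, the cyclic branched cover $W\defeq\Sigma_p(B_1,\Sigma)$ of the Stein domain $B^4$ along the properly embedded symplectic branch locus $\Sigma$ inherits a Stein structure, with $\partial W=\Sigma_p(\Lambda')$ carrying its branched-cover contact structure. This already exhibits $\Sigma_p(\Lambda')$ as Stein fillable, giving one half of the theorem. The construction is natural for the concordances: branch-covering the collar of the symplectization carrying $\widehat{C}_2$ produces a symplectic cobordism from $\Sigma_p(\Lambda')$ to $\Sigma_p(U')=S^3$.

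For the embedding statement I would glue. Set $B_2\defeq B_1\cup R_2\cong B^4$, where $R_2$ is the collar of the symplectization containing $\widehat{C}_2$, and let $\Sigma_2\defeq\Sigma\cup\widehat{C}_2$, a symplectic disk in $B_2$ with $\partial\Sigma_2=U'$ the maximal transverse unknot at the top. Its branched cover decomposes as $\Sigma_p(B_2,\Sigma_2)=W\cup(\text{cobordism to }S^3)$, so that $W$ embeds into $\Sigma_p(B_2,\Sigma_2)$ with $\Sigma_p(\Lambda')=\partial W$ a separating contact-type hypersurface. It then remains to identify $\Sigma_p(B_2,\Sigma_2)$ with the standard ball. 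Because $\Sigma_2$ fills the standard transverse unknot, its branched-cover boundary is $(S^3,\xi_{std})$, and $\Sigma_p(B_2,\Sigma_2)$ is again a Stein filling of $(S^3,\xi_{std})$; by the uniqueness of symplectic fillings of $(S^3,\xi_{std})$ (Gromov--McDuff) this filling is $(B^4,\omega_{std})$. Hence $W$ embeds symplectically into $(B^4,\omega_{std})$ with contact-type boundary $\Sigma_p(\Lambda')$, as required.

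The hardest steps are the two technical inputs underlying the branched-cover machinery. First, one must upgrade the symplectic branch locus $\Sigma$ to a genuinely holomorphic curve so that the cyclic cover acquires a Stein, not merely symplectic, structure with the expected contact boundary; this is where the bulk of the work lies. Second, one must verify that the branched-cover contact structure on $\Sigma_p(U')=S^3$ is indeed $\xi_{std}$, so that McDuff's uniqueness theorem applies. In addition, the Lagrangian-to-symplectic passage for the push-offs has to be performed compatibly with all the gluings, so that the branch loci $D'$, $\widehat{C}_1$, and $\widehat{C}_2$ match up smoothly along the intermediate copies of $S^3$ at the $\Lambda'$ and $U'$ levels.
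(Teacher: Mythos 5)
Your very first step contains the one genuine gap. You assert that a genus-zero Lagrangian cobordism from $\Lambda_-$ to $\Lambda_+$ automatically yields a symplectic annulus between the positive transverse push-offs, and you apply this separately to $C_1$ and to $C_2$. But the approximation results actually available do not give this: Eliashberg's lemma requires $\omega|_F>0$ near \emph{all} of $\partial F$, which fails identically along a Lagrangian, and the refinement of \cite{CaoGal14} (the paper's Lemma \ref{lem:approximation}) repairs this only for a Lagrangian whose boundary is a Legendrian in the \emph{convex} end of a filling. Your annulus $\widehat{C}_2$, coming from the concordance $\Lambda\prec U$, has a concave negative end at $\Lambda$, and no cited tool produces a symplectic approximation there with positively transverse boundary; you would need a new argument (say, a Weinstein-neighbourhood construction using exactness of the concordance, taking the graph of a $1$-form $\sigma$ with $d\sigma>0$ equal to the push-off sections on both cylindrical ends). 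This is exactly the difficulty the paper's proof is engineered to avoid: it first concatenates and caps, so that $D^2\cup C_1\cup C_2$ is a Lagrangian \emph{filling} of $U$ with boundary only at the top, and only then approximates, once. You can repair your argument the same way without changing its character: approximate $D^2\cup C_1\cup C_2$ in one go, then cut along a middle level sphere $\{t\}\times S^3$; $C^\infty$-smallness guarantees the middle slice is the transverse push-off $\Lambda'$, and the two pieces are your $\Sigma$ and $\widehat{C}_2$.

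With that repair, your route is sound but genuinely different from the paper's at the key rigidity step. The paper invokes Chantraine's Theorem \ref{thm:cylindrical} (resting on Eliashberg--Polterovich): the concordance $C_1\cup C_2$ from $U$ to $U$ is Hamiltonian-standard, so the capped Lagrangian disk is standard, its symplectic approximation $C'$ is standard, and the cover $\Sigma_p(C')$ is therefore \emph{visibly} $(B^4,\xi_{std})$. You never use this; instead you identify the total branched cover as a filling of $\Sigma_p(U')=(S^3,\xi_{std})$ and appeal to Gromov--McDuff. For that to close, note that the cyclic cover of a ball branched over a disk bounding the unknot is a homology ball, so $H_2=0$, the branched-cover form is exact, the filling is minimal, and McDuff gives the standard ball rather than merely a blow-up; this is the same appeal the paper itself makes in proving Theorem \ref{thm:fillings_embed}, so it is fair game. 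The trade-offs: the paper's route pins down the ambient cover and branch locus explicitly, which it reuses later (the complementary piece $V$ in Theorem \ref{thm:fillings_embed} and in the proof of Theorem \ref{thm:main}), whereas yours identifies the ambient only up to the symplectomorphism furnished by McDuff's theorem; on the other hand yours is independent of the Eliashberg--Polterovich rigidity. Finally, on Steinness: choosing a \emph{compatible almost complex structure} making $\Sigma$ pseudoholomorphic gives only a strong (here exact) filling, not a Stein one, since such a $J$ need not be integrable; for the Stein statement one can instead quote Plamenevskaya's branched-cover result for quasipositive braid closures, as the paper remarks immediately after the theorem.
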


\begin{thm}\label{thm:fillings_embed}
	If $\Lambda$ is a Legendrian knot which satisfies $U\prec\Lambda\prec U$, then any filling of the $p$-fold cyclic branched cover $\Sigma_p(\Lambda')$ of $S^3$ branched over the transverse push-off $\Lambda'$ must embed in a blow up of $B^4$ and must have negative definite intersection form.
\end{thm}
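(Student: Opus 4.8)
The plan is to leverage Theorem \ref{thm:contact_hypersurface}, which already places $\Sigma_p(\Lambda')$ inside $(B^4,\xi_{std})$ as a separating contact type hypersurface. First I would cut $B^4$ along this hypersurface. Since the Liouville field is transverse to $\Sigma_p(\Lambda')$ and points outward along $\partial B^4 = S^3$, the outer region $W$ inherits the structure of a symplectic cobordism whose concave (negative) end is $(\Sigma_p(\Lambda'),\xi)$ and whose convex (positive) end is the standard $(S^3,\xi_{std})$, while the inner region is exactly the embedded Stein filling furnished by Theorem \ref{thm:contact_hypersurface}.

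Next, given an arbitrary strong symplectic filling $F$ of $(\Sigma_p(\Lambda'),\xi)$, I would glue $F$ to $W$ along their common boundary $\Sigma_p(\Lambda')$. Here the convex boundary of $F$ matches the concave end of $W$, so the Liouville forms agree in a collar and the union $X \defeq F\cup_{\Sigma_p(\Lambda')} W$ is a smooth symplectic filling of $(S^3,\xi_{std})$. At this point I would invoke McDuff's structure theorem for fillings of the standard contact $3$-sphere: any symplectic filling of $(S^3,\xi_{std})$ is diffeomorphic to a blow-up $B^4\#\,k\,\overline{\cptwo}$ of the $4$-ball. Since $F$ sits inside $X$ by construction, this immediately yields the first conclusion: $F$ embeds in a blow-up of $B^4$.

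For the statement about the intersection form, I would exploit that $\Lambda'$ is a knot, so $\Sigma_p(\Lambda')$ is a rational homology sphere. Then the Mayer--Vietoris sequence with $\QQ$ coefficients for $X = F\cup_{\Sigma_p(\Lambda')} W$ collapses, since $H_1$ and $H_2$ of the gluing hypersurface vanish rationally, to give $H_2(X;\QQ)\cong H_2(F;\QQ)\oplus H_2(W;\QQ)$. Classes supported in the interiors of $F$ and of $W$ can be made disjoint, so the intersection form of $X$ is block-diagonal, $Q_X\cong Q_F\oplus Q_W$ over $\QQ$. Because $Q_X$ is the intersection form of $B^4\#\,k\,\overline{\cptwo}$, namely $k\langle -1\rangle$, it is negative definite; hence each orthogonal summand, in particular $Q_F$, is negative definite as well. (The same exact sequence shows $H_2(F;\QQ)\to H_2(F,\partial F;\QQ)$ is an isomorphism, so $Q_F$ is genuinely the intersection form of $F$ and agrees with the restriction of $Q_X$.)

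The main obstacle I anticipate is the symplectic bookkeeping in the gluing step: one must verify that the contact structure induced on $\Sigma_p(\Lambda')$ from its embedding in $B^4$ is genuinely the one that $F$ fills, and that the Liouville collars can be arranged so that the glued form is an honest symplectic form making $S^3$ convex. Once that matching is secured, the appeal to McDuff's theorem and the rational Mayer--Vietoris computation are essentially formal.
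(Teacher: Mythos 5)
Your proposal is correct and follows essentially the same route as the paper: it cuts $B^4$ along the contact type hypersurface $\Sigma_p(\Lambda')$ furnished by Theorem \ref{thm:contact_hypersurface}, glues an arbitrary filling to the outer complementary piece (the paper's $V$, your $W$), and applies McDuff's theorem to identify the glued manifold as a blow-up of $B^4$. Your rational Mayer--Vietoris argument for negative definiteness is just a more explicit version of the paper's terse observation that surfaces in the filling embed in $B^4\#n\overline{\cptwo}$, so the two proofs agree in substance.
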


We consider specifically the branched double cover to obtain a further obstructions using $d_3$, Gompf's 3-dimensional invariant on 2-plane fields \cite{Gom98}:
\begin{thm}\label{thm:d3_obstruction}
	If $\Lambda$ is a Legendrian knot which satisfies $U\prec\Lambda\prec U$, then the contact $2$-fold cyclic branched cover $(\Sigma_2(\Lambda'),\xi)$ of $S^3$ branched over the positive transverse push-off $\Lambda'$ of $\Lambda$ has 
	$$d_3(\xi) =-\frac{1}{2}.$$
\end{thm}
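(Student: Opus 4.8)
The plan is to compute $d_3(\xi)$ directly from an explicit almost complex filling of $(\Sigma_2(\Lambda'),\xi)$ via Gompf's formula. Recall that if $(W,J)$ is an almost complex $4$-manifold whose field of complex tangencies along $\partial W = Y$ is homotopic to $\xi$, and $c_1(\xi)$ is torsion, then
$$d_3(\xi) = \frac{1}{4}\left(c_1^2(W,J) - 2\chi(W) - 3\sigma(W)\right).$$
As a sanity check, taking $(W,J)=(B^4,J_{std})$ with $\partial W = (S^3,\xi_{std})$ gives $c_1 = 0$, $\chi = 1$, $\sigma = 0$, hence $d_3(\xi_{std}) = -\tfrac12$, the standard value. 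The whole strategy is to exhibit a filling of $(\Sigma_2(\Lambda'),\xi)$ with the same invariants as $B^4$.

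First I would produce the filling. Since $U\prec\Lambda$, capping the negative end of the concordance with a Lagrangian disk filling of $U$ yields a genus-zero Lagrangian filling of $\Lambda$; passing to $B^4$ exactly as in the proof of Theorem \ref{thm:contact_hypersurface} gives a symplectic disk $\Sigma$ in $(B^4,\omega_{std})$ whose boundary is the transverse push-off $\Lambda'$. Taking the double cover of $B^4$ branched over $\Sigma$ produces a Stein domain $(W,J)$ with $\partial(W,J) = (\Sigma_2(\Lambda'),\xi)$, which is precisely the filling furnished by Theorem \ref{thm:contact_hypersurface}.

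Next I would compute the three topological quantities. The branched-cover Euler characteristic formula gives $\chi(W) = 2\chi(B^4) - \chi(\Sigma) = 2 - 1 = 1$. The essential point is that, because $\Lambda'$ is slice with a genus-zero slice disk $\Sigma$, the double branched cover $W$ is a rational homology ball: this is the classical fact that the branched double cover of a slice disk has the rational homology of a point. Consequently $b_2(W) = 0$, so $\sigma(W) = 0$. Since $\Sigma_2(\Lambda')$ is then a rational homology sphere, $c_1(\xi)$ is torsion; and as $H^2(W;\QQ) = 0 = H_2(W;\QQ)$, the rational self-intersection $c_1^2(W,J)$ is forced to vanish. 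Substituting into Gompf's formula yields $d_3(\xi) = \tfrac14(0 - 2 - 0) = -\tfrac12$, as claimed.

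The main obstacle is the middle step: verifying that $W$ really is a rational homology ball, and that the field of complex tangencies induced on $\partial W$ is exactly the $\xi$ whose $d_3$ we want. The homology computation rests squarely on the genus-zero (disk) hypothesis, which is precisely where $U\prec\Lambda$—rather than a higher-genus Lagrangian cobordism—enters; I would either invoke the standard slice-disk branched-cover computation or reprove $b_1(W;\QQ) = 0$ directly via the transfer homomorphism of the double cover together with $\chi(W)=1$. Care is also needed to confirm that the Stein structure on $W$ induces $\xi$ and not $\bar\xi$, so that the resulting sign in $d_3$ is correct; this is the reason the statement specifies the \emph{positive} transverse push-off $\Lambda'$.
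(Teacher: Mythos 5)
Your proposal is correct, and it follows the same skeleton as the paper's proof: both apply Gompf's formula to the Stein filling of $(\Sigma_2(\Lambda'),\xi)$ produced by Theorem \ref{thm:contact_hypersurface} (the double cover of $B^4$ branched over the symplectic slice disk), and both obtain $\chi=1$ from the branched-cover Euler characteristic count. Where you genuinely diverge is in the middle step. The paper never establishes $b_2(X)=0$; instead it uses the embedding $X\subset B^4\subset\RR^4$ furnished by Theorem \ref{thm:contact_hypersurface} to argue that every closed surface in $X$ has vanishing self-intersection (its normal Euler number can be computed in $\RR^4$), so $Q_X\equiv 0$ and $\sigma(X)=0$; it then needs a separate argument for $c_1^2=0$, lifting a multiple $d[F]$ of the Poincar\'e dual of $c_1$ to a closed surface using the finiteness of $H_1(\Sigma_2(\Lambda'))$ (of order $\det(\Lambda')$) and pairing against the trivial form. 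You instead invoke the classical Casson--Gordon fact that the double cover of $B^4$ branched over a smooth slice disk is a rational homology ball, which gives $H_2(W;\QQ)=0$ and hence $H^2(W;\QQ)\cong H_2(W,\partial W;\QQ)=0$ at once, killing $\sigma$ and $c_1^2$ simultaneously. Your route is cleaner and proves something a priori stronger (actual vanishing of $b_2$, not merely degeneracy of the intersection form), and it isolates exactly what is needed --- sliceness of $\Lambda'$ --- rather than the embedding into $\RR^4$; the paper's route has the virtue of being self-contained, reusing the embedding it has already constructed instead of an external classical theorem. One caveat: the rational homology ball statement genuinely requires the branched-cover/Alexander-module argument (or a citation); it does not follow from the transfer homomorphism together with $\chi(W)=1$ alone, since for a compact $4$-manifold with rational homology sphere boundary, $\chi=1$ only forces $b_2=2b_1$, and the transfer only controls the invariant part of the homology. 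Since you correctly flag this as the crux and propose to cite the standard computation, the proof is complete as outlined.
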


These obstructions eliminate all 3-braids except those which are quasipositive and of algebraic length 2. To obstruct the remaining 3-braids, we study particular fillings of their branched double covers by drawing the Weinstein handlebody diagrams \cite{Wei91} of these fillings. Weinstein handlebody diagrams are the symplectic analogue to Kirby diagrams and illustrate a symplectic version of handle decomposition. We obtain these diagrams by adapting the recipe laid out by Casals and Murphy in \cite{CasMur19} and prove the following theorem:

\begin{thm}\label{thm:diagram}
	Let $\Lambda\neq U$ be a Legendrian knot which is smoothly the closure of a quasipositive 3-braid of algebraic length 2. Let $\Lambda'$ be a positive transverse push off of $\Lambda$. If Lemma \ref{lemma:qgreaterthanp} gives $\gamma$ with slope $(p,q)$, $0< p < q$, then there is a filling of $\Sigma_2(\Lambda')$, the double cover of $S^3$ branched over $\Lambda'$, given by the handle decomposition consisting of a single 1-handle and a single 2-handle pictured in Figure \ref{fig:d_Lambda}.
\end{thm}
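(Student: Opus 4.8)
The plan is to realize the required filling as an honest branched double cover and then translate its natural handle decomposition into a Weinstein diagram. First I would fix the Bennequin (quasipositive) surface $F\subset (B^4,\omega_{std})$ bounded by $\Lambda'$. Since $\Lambda$ is the closure of a quasipositive $3$-braid that is a product of two positive bands, $F$ is assembled from three disks $D_1,D_2,D_3$ (one per strand) joined by two bands $b_1,b_2$, so that $\chi(F)=3-2=1$ and $F$ is a connected symplectic slice disk. By the setup already used in Theorem \ref{thm:contact_hypersurface}, the double cover $W\defeq\Sigma_2(B^4,F)$ branched along the symplectic surface $F$ is precisely the Stein (hence Weinstein) filling of $(\Sigma_2(\Lambda'),\xi)$ that embeds in $(B^4,\xi_{std})$; this is the filling we must exhibit by a handlebody diagram. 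As a consistency check, $\chi(W)=2\chi(B^4)-\chi(F)=2-1=1$, which must match the final handle count.

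Next I would extract a handle decomposition of $W$ from that of $F$. The local model for the branched double cover of $B^4$ over a single unknotted disk is again $B^4$, so $D_1$ supplies the $0$-handle; the double cover of $B^4$ over two disjoint disks is $S^1\times D^3$, i.e.\ $B^4$ with one $1$-handle attached, so each of the two remaining disks $D_2,D_3$ contributes a Weinstein $1$-handle; and each band $b_i$, being a $1$-handle of the surface $F$, lifts to a Weinstein $2$-handle. I would make this precise by running the Casals--Murphy recipe \cite{CasMur19}: present $F$ as a Legendrian front of disks and bands, and read off the Weinstein handlebody diagram of $W$ as two subcritical $1$-handles together with two $2$-handles, whose attaching Legendrians and their Thurston--Bennequin framings are dictated by how $b_1$ and $b_2$ run over the $1$-handles. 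This yields one $0$-, two $1$-, and two $2$-handles, again with $\chi(W)=1-2+2=1$.

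Finally I would simplify the diagram. The key observation is that, for a length-$2$ quasipositive $3$-braid, one of the two bands runs over its $1$-handle trivially, so the diagram contains a canceling Weinstein $1$-handle/$2$-handle pair; cancelling it leaves exactly one $1$-handle and one $2$-handle, as claimed. I would then identify the surviving $2$-handle's attaching sphere with the curve $\gamma$ produced by Lemma \ref{lemma:qgreaterthanp}: after cancellation the attaching circle lies on the Heegaard torus of $S^1\times S^2=\partial(B^4\cup 1\text{-handle})$ as a curve of slope $(p,q)$, which is exactly the Legendrian drawn in Figure \ref{fig:d_Lambdaagain}.

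The hard part will be the bookkeeping. I expect the main obstacle to be carrying the rotation numbers, framings, and over/under crossing data of the two bands accurately through the Casals--Murphy recipe and then through the handle cancellation, so as to certify that the surviving attaching sphere genuinely realizes slope $(p,q)$ with $0<p<q$, rather than some handle-slide-equivalent curve of a different slope. A secondary but essential check is that the branched-cover Weinstein structure is the one whose induced boundary contact structure is the $\xi$ appearing in Theorem \ref{thm:d3_obstruction}, so that the diagram in Figure \ref{fig:d_Lambdaagain} is a genuine filling of $(\Sigma_2(\Lambda'),\xi)$ and not merely of the underlying smooth $3$-manifold.
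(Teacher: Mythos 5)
Your global picture does match the paper's construction: the filling in question is the double cover of $B^4$ branched over the pushed-in quasipositive disk, the count of one $0$-handle, two $1$-handles and two $2$-handles is correct, and the final cancellation of a $1$-handle/$2$-handle pair coming from the ``trivial'' band $\sigma_1$ (after conjugating the braid to $\sigma_1 B\sigma_1 B^{-1}$, which is the paper's Lemma \ref{lem:sigma1braid}) is exactly the paper's last simplification step. However, there are two genuine gaps. First, your mechanism for upgrading the smooth Akbulut--Kirby handle decomposition to a \emph{Weinstein} handlebody diagram is misstated: the Casals--Murphy recipe does not take ``a Legendrian front of disks and bands'' as input. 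It requires a Weinstein Lefschetz fibration whose vanishing cycles are written as words in Dehn twists about exact Lagrangian spheres with known Legendrian lifts, so that Proposition \ref{prop:CM} can be applied. To get that, you must first convert the branched cover into such a fibration; this is precisely what the paper does in Proposition \ref{prop:OBD} and Corollary \ref{cor:WeinsteinLF}, lifting the braid word $\sigma_1 B\sigma_1 B^{-1}$ to the monodromy $\tau_\alpha\tau_\gamma$ on the once-punctured torus page (the double cover of the disk branched over three points). Without that step, your ``two $1$-handles plus two $2$-handles'' decomposition carries no certified Weinstein attaching data, i.e.\ no Legendrian attaching spheres with $tb-1$ framings.

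Second, and more seriously, the actual content of the theorem is the \emph{specific} front in Figure \ref{fig:d_Lambdaagain} --- a single attaching circle running through the $1$-handle $q$ times, organized as $p$ and $q-p$ parallel strands --- because the DGA computation in Section \ref{sec:DGA} and the nonvanishing argument for symplectic homology read their generators and differentials off this particular front. Your proposal compresses this into ``bookkeeping'' certifying that the surviving attaching circle is a $(p,q)$ curve on a Heegaard torus of $S^1\times S^2$; that is only a smooth statement, and it is exactly the part that occupies the bulk of the paper's proof: the inductive application of Proposition \ref{prop:CM} following the Euclidean algorithm for $q/p$ (Figures \ref{fig:pq_1}--\ref{fig:pq_5}), followed by the handleslide and $1$-handle cancellation of Figure \ref{fig:pq_final}. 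Deferring this leaves the theorem, as stated, unproven. A minor further point: you invoke ``the setup already used in Theorem \ref{thm:contact_hypersurface}'' to get the Stein filling, but that theorem assumes $U\prec\Lambda\prec U$, which is not a hypothesis of Theorem \ref{thm:diagram}; the Stein structure on the branched double cover should instead be sourced from quasipositivity alone, e.g.\ via \cite{Pla06}, as the paper itself remarks.
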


\begin{figure}
	\begin{center}
		\includegraphics[width=5 cm]{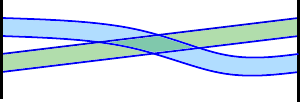}
	\end{center}
	\caption[A Weinstein filling of $\Sigma_2(\Lambda')$.]{A Weinstein filling of $\Sigma_2(\Lambda')$. The shaded blue and shaded green represent parallel strands of a single Legendrian link in $S^1\times S^2$.}
	\label{fig:d_Lambda}
\end{figure}

We then compute the Legendrian contact homology differential graded algebra of the Legendrian attaching sphere depicted in the general diagram. The homology of this differential graded algebra, called the Legendrian contact homology was introduced by Chekanov and Eliashberg \cite{Che02,Eli98}, and was the first non-classical Legendrian link invariant. The differential graded algebra is generated by Reeb chords in a Legendrian link in $\RR^3$, and the differential counts holomorphic disks in $\RR\times \RR^3$. A $\ZZ$-graded version of the Legendrian contact homology differential graded algebra was defined by Etnyre, Ng, and Sabloff \cite{EtnNg02}, and a version of this differential graded algebra was defined for links in $\#^m(S^1\times S^2)$ by Ekholm and Ng \cite{EkhNg15}, and simplified by Etg\"u and Lekili \cite{EtgLek19}. The Legendrian contact homology DGA can be used to compute invariants of the Weinstein manifolds including the symplectic homology of the filling of $\Sigma_2(\Lambda')$ following work of \cite{BouEkh12}. We prove:

\begin{thm}\label{thm:nonvanishingSH}
	Let $\Lambda\neq U$ be a Legendrian knot which is smoothly the closure of a quasipositive 3-braid of algebraic length 2. Let $\Lambda'$ be a positive transverse push off of $\Lambda$. Then there is a filling of $\Sigma_2(\Lambda')$, the double cover of $S^3$ branched over $\Lambda'$, which has nonvanishing symplectic homology.
\end{thm}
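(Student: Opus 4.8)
The plan is to read off the Weinstein domain $W$ filling $\Sigma_2(\Lambda')$ from the handlebody diagram supplied by Theorem \ref{thm:diagram}, and then to compute its symplectic homology through the Chekanov--Eliashberg differential graded algebra of its critical attaching sphere, using the surgery formula of Bourgeois--Ekholm--Eliashberg \cite{BouEkh12}. Concretely, $W$ consists of a $0$-handle, a single $1$-handle, and a single $2$-handle; after attaching the $1$-handle the subcritical piece is $(S^1\times S^2,\xi_{std})$, and the $2$-handle is glued along the Legendrian link $\mathcal{L}\subset S^1\times S^2$ drawn in Figure \ref{fig:d_Lambda} via its two parallel (blue and green) strands, a curve of slope $(p,q)$ with $0<p<q$. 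Since $SH_*(B^4)=0$, establishing $SH_*(W)\neq 0$ is exactly the kind of statement that later clashes with the embedding constraints of Theorems \ref{thm:contact_hypersurface} and \ref{thm:fillings_embed}, so here the target is just the nonvanishing itself.

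First I would write down $\mathcal{A}=\mathcal{A}(\mathcal{L})$ explicitly, using the combinatorial model of Ekholm--Ng for Legendrian links in $\#^m(S^1\times S^2)$ \cite{EkhNg15} in the streamlined form of Etg\"u--Lekili \cite{EtgLek19}. This means enumerating the Reeb-chord generators---both the chords visible in the front of the two strands and the chords created by passing through the $1$-handle---recording their $\mathbb{Z}$-gradings, and computing the differential by the combinatorial disk count. Because $\mathcal{L}$ is the explicit slope-$(p,q)$ curve produced by Lemma \ref{lemma:qgreaterthanp} (carrying only the two parallel strands), I expect $\mathcal{A}$ and its homology, the Legendrian contact homology of $\mathcal{L}$, to be computable in closed form as a function of $p$ and $q$; in particular I would identify an augmentation $\epsilon$ induced by the filling $W$ and compute the associated linearized contact homology.

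With the algebra in hand, I would invoke \cite{BouEkh12}: the positive symplectic homology $SH^+_*(W)$ is isomorphic, up to the standard grading shift, to the linearized Legendrian contact homology of $\mathcal{L}$, and $SH_*(W)$ fits into the tautological exact triangle relating it to $SH^+_*(W)$ and $H_*(W)$. Since $W$ is built from handles of index at most $2$, its ordinary homology is concentrated in low degrees and easy to pin down, so the nonvanishing of $SH_*(W)$ follows once I exhibit a cycle in the linearized complex whose degree---or whose contribution to the rank, after a ranks comparison---cannot be absorbed by the connecting map into $H_*(W)$.

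The main obstacle is the explicit contact-homology computation of the second step, and it is twofold. Getting the $\mathbb{Z}$-grading right is delicate in the $S^1\times S^2$ setting, where the generators threading the $1$-handle contribute a winding/Maslov-type term that must be tracked through the slope-$(p,q)$ combinatorics; and the differential's disk count for a $(p,q)$-curve grows with $p$ and $q$, so I would need a stable, $(p,q)$-independent description of the homology rather than a case-by-case tally. The final subtlety is confirming that the distinguished class genuinely survives to $SH_*(W)$ and is not cancelled through the connecting homomorphism with $H_*(W)$; this requires the grading and rank bookkeeping above to be exact, not merely up to the usual indeterminacies.
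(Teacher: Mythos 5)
Your overall framework matches the paper's: both start from the Weinstein diagram of Theorem \ref{thm:diagram}, compute the Ekholm--Ng/Etg\"u--Lekili DGA of the attaching link, and feed it into the Bourgeois--Ekholm--Eliashberg surgery formula. But the central step of your plan --- ``identify an augmentation $\epsilon$ induced by the filling $W$ and compute the associated linearized contact homology'' --- fails for a structural reason, not a computational one: this DGA admits no graded augmentation at all. In the diagram $\mathcal{D}(\Lambda)$ every strand passes through the $1$-handle with the same Maslov potential, so every internal generator $c^0_{i\;j}$ has degree $-1$ and every $c^1_{i\;j}$ has degree $+1$. A graded augmentation into a unital ring concentrated in degree $0$ must vanish on all of them, and then
$$\epsilon\bigl(\partial c^1_{i\;i}\bigr)=\epsilon\Bigl(e_1+\sum_m \pm\, c^0_{i\;m}c^1_{m\;i}+\sum_m \pm\, c^1_{i\;m}c^0_{m\;i}\Bigr)=1\neq 0,$$
contradicting $\epsilon\circ\partial=0$. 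Nor does the filling $W$ ``induce'' an augmentation: augmentations of the DGA of the attaching link would come from exact Lagrangian fillings of $\mathcal{L}$ inside the subcritical piece, and the core disk of the $2$-handle sits on the wrong side of the surgery. Relatedly, your citation of \cite{BouEkh12} is not what that paper proves: $SH^+(W)$ is computed there by the Hochschild-type complex $LH^{Ho+}$ built from cyclically composable words in the \emph{full, non-linearized} DGA (with a marked letter), not by linearized Legendrian contact homology, which both presupposes the augmentation you cannot produce and computes a different (equivariant-flavored) invariant.

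The paper's proof sidesteps both issues by working directly with $LH^{Ho}(L)=LH^{Ho+}(L)\oplus C(L)$ (Theorem \ref{thm:BEE}), which computes the full $S\mathbb{H}(X_\Lambda)$ with no augmentation and no exact-triangle bookkeeping, and by exhibiting an explicit class rather than computing the homology in closed form. The degree-$0$ crossing generators $a,b_1,\dots,b_{q-1}$ have differentials that pair up the interior chords $c^0_{i\;i+1}$, so a signed combination $c=ta+\sum_{i}\epsilon_i t^{s_i}b_i$ is a cycle in $\mathscr{A}_L$; Lemma \ref{lemma:nosolos} shows that $a,b_1,\dots,b_{q-1}$ never occur as linear monomials in any differential, so $c$ is not a boundary, and $(\check{c},\hat{c},\tau_1)$ survives as a nonzero class in $S\mathbb{H}(X_\Lambda)$. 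This single observation is uniform in $(p,q)$ and is exactly the ``stable, $(p,q)$-independent description'' your proposal acknowledges it needs but does not supply. As written, your route cannot be repaired without abandoning the linearized formulation in favor of the non-linearized Hochschild complex.
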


The main result (Theorem \ref{thm:main}) then follows from an argument showing that this filling of $\Sigma_2(\Lambda')$ cannot embed in $B^4$.

The strategy used in this paper can also be extended to $n$-stranded braids on a case by case basis. In example \ref{eg:m10_140}, we consider the $m(10_{140})$ knot, which is the closure of a 4-stranded braid. It was previously not known whether or not this knot can be Lagrangian concordant to the unknot since its Legendrian contact homology DGA is stable tame isomorphic to the DGA of the unknot \cite{CorNg16} and thus indistiguishable via contact homology techniques. We show that in fact that $m(10_{140})\nprec U$.

A corollary of Theorem \ref{thm:main} may be of separate interest. In \cite{MarTos20}, Mark and Tosun pose the following question: \emph{which smooth, oriented manifolds can be realized, up to diffeomorphism, as contact type hypersurfaces in $\RR^{2n}$ with the standard symplectic structure?} They rule out the Brieskorn spheres. We find another infinite family of contact manifolds which are rational homology spheres but do not embed as contact type hypersurfaces in $\RR^4$. 

\begin{cor}\label{cor:not_hyper}
	Let $\Sigma_2(\Lambda')$ be the double cover of $S^3$ branched over a quasipositive transverse knot which is the closure of a 3-braid of algebraic length 2. Suppose $\Lambda'$ is not the unknot. Then $\Sigma_2(\Lambda')$ does not embed as a contact type hypersurface in $\RR^4$.
\end{cor}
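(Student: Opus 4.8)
The plan is to argue by contradiction, converting a hypothetical contact-type embedding into a filling of the standard contact sphere that is forced to carry both vanishing and nonvanishing symplectic homology. Suppose $\Sigma_2(\Lambda')$, equipped with its canonical contact structure $\xi$, embeds as a contact type hypersurface in $(\RR^4,\omega_{std})\cong\CC^2$. Since the radial Liouville vector field is transverse to $\Sigma_2(\Lambda')$ and points outward, the bounded region it encloses is an exact (Liouville) filling $W_0$ of $(\Sigma_2(\Lambda'),\xi)$, while the region between $\Sigma_2(\Lambda')$ and a large standard sphere $S^3_R$ is a Liouville cobordism $C$ with negative end $(\Sigma_2(\Lambda'),\xi)$ and positive end $(S^3,\xi_{std})$. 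On the other hand, the hypotheses place us exactly in the setting of Theorem \ref{thm:nonvanishingSH}, which supplies a Weinstein (hence exact) filling $W$ of $(\Sigma_2(\Lambda'),\xi)$ with $SH_*(W)\neq 0$. The key move is that I would \emph{discard} the inside filling $W_0$ coming from the ambient $\RR^4$ (its symplectic homology is a priori the wrong invariant, since $SH_*$ depends on the filling) and instead cap off $W$ with the outside cobordism $C$.

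Concretely, glue $W$ to $C$ along their common contact boundary $(\Sigma_2(\Lambda'),\xi)$ to form $\hat{X}=W\cup_{\Sigma_2(\Lambda')}C$, a compact manifold with boundary $(S^3,\xi_{std})$. After matching the two Liouville forms along $\Sigma_2(\Lambda')$ (rescaling by the Liouville flow so the induced contact forms agree) this gluing is exact, so $\hat X$ is a Liouville filling of the standard contact $3$-sphere. By the Gromov--Eliashberg--McDuff classification of symplectic fillings of $(S^3,\xi_{std})$, any such exact filling is symplectomorphic to the standard ball $B^4$; in particular $SH_*(\hat X)\cong SH_*(B^4)=0$.

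To reach the contradiction, observe that by construction $W$ sits inside $\hat X$ as a Liouville subdomain. Viterbo functoriality then furnishes a transfer map $SH_*(\hat X)\to SH_*(W)$ which is a unital ring homomorphism, so it carries the unit of $SH_*(\hat X)$ to the unit of $SH_*(W)$. Since $SH_*(\hat X)=0$ its unit vanishes, forcing the unit of $SH_*(W)$ to vanish and hence $SH_*(W)=0$, contradicting Theorem \ref{thm:nonvanishingSH}. Therefore no such contact-type embedding exists. I expect the main obstacle to be the gluing step: one must verify that $\hat X$ really is an exact (Liouville) filling---that is, that the Liouville primitives on $W$ and on $C$ can be matched along $\Sigma_2(\Lambda')$ so that the result admits a global primitive---since both the uniqueness-of-filling input and the functoriality of symplectic homology require genuine Liouville, not merely strong symplectic, structures. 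A secondary point requiring care is the identification of the contact structure induced on the hypersurface with the canonical $\xi$, which is precisely what permits $W$ and $C$ to be glued; this is where the rigidity of the branched-cover setup (and, if needed, the $d_3$ computation of Theorem \ref{thm:d3_obstruction}) enters.
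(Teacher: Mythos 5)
Your proof is correct, and its engine is the same as the paper's: replace the inner filling by the Weinstein domain $X_\Lambda$ of Theorem \ref{thm:diagram} (your $W$), whose symplectic homology is nonzero by Theorem \ref{thm:nonvanishingSH}, glue it to the exterior of the hypersurface, identify the glued manifold with the standard object, and contradict Viterbo functoriality. Where you genuinely diverge is in the identification step. The paper keeps the full noncompact exterior $\RR^4\setminus X$, so the glued manifold is only known, via Gromov \cite{Gro85} and McDuff \cite{McD90}, to be $\RR^4\# m\overline{\cptwo}$ for some $m\geq 0$; it then runs two Mayer--Vietoris computations (using that $\Sigma_2(\Lambda')$ is a rational homology sphere and $H_2(X_\Lambda;\QQ)=0$) to force $m=0$, which is needed because McLean's Theorem \ref{thm:McLean} requires the ambient manifold to be subcritical. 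You instead truncate the exterior at a large sphere and glue two \emph{exact} pieces, so $\hat X$ is an exact filling of $(S^3,\xi_{std})$; exactness excludes symplectic spheres for free, so minimality --- hence $\hat X\cong B^4$ and $S\mathbb{H}(\hat X)=0$ --- comes with no homology computation, and you then rederive the unital transfer-map argument of \cite{McL09} by hand instead of quoting its packaged corollary. The trade-off is that your route leans harder on the Liouville geometry of the exterior piece: the contact-type hypothesis only provides a Liouville vector field \emph{near} $\Sigma_2(\Lambda')$, and to make $C$ a genuine Liouville cobordism you must patch that local primitive with the global primitive of $\omega_{std}$, which works precisely because $H^1(\Sigma_2(\Lambda');\RR)=0$; you correctly flag this as the main obstacle, and it is resolvable, whereas the paper's homological route never needs the exterior to be exact at all. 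Both arguments equally require that the induced contact structure on the hypersurface is the canonical one, so that the gluing along $\Sigma_2(\Lambda')$ makes sense.
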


\subsection*{Structure of the paper}

In section 2, we build some obstructions from fillings of double covers and apply them to 3-braids. In section 3, we produce particular open book decompositions of the branched double covers of the knots in a remaining family of 3-braids. We find corresponding Weinstein Lefschetz fibrations of fillings of these covers. In section 4, we draw the Weinstein handlebody diagrams for these fillings. In Chapter 5, we compute the Legendrian contact homology differential graded algebra of the Legendrian attaching spheres of these Weinstein handlebody diagrams and prove that these fillings have non-zero symplectic homology. In section 6, we prove Theorem \ref{thm:main}.

\subsection*{Acknowledgements}

The author thanks Roger Casals, Orsola Capovilla-Searle, Jonathan Evans, Yank{\i} Lekili, Steven Sivek, Laura Starkston, Shea Vela-Vick, No\'emie Legout, and Russell Avdek for their helpful insights and conversations. This research was supported by the Engineering and Physical Sciences Research Council
[EP/L015234/1], the EPSRC Centre for Doctoral Training in Geometry and Number Theory (The London
School of Geometry and Number Theory), University College London, and by Louisiana State University. 

\section{Initial obstructions from fillings}

In this section, we prove some new obstructions to Lagrangian concordance to and from the $tb=-1$ unknot which come from restrictions on the fillings of cyclic branched covers of transverse knots. We apply these to obstruct all but a particular family of closures of 3 braids from Lagrangian double concordance.

In \cite{Ben83}, Bennequin showed that any conjugacy class of braids can be closed in a natural way to produce a transverse knot in $(S^3,\xi_{std})$, and that every transverse knot is transversely isotopic to a closed braid. Orevkov, Shevchishin, and Wrinkle \cite{OreShe03,Wri02} show that two transverse closed braids that are isotopic as transverse knots are also isotopic as transverse braids. More precisely:
\begin{thm}\label{thm:tranverseMarkov} \cite{OreShe03,Wri02}
Two braids represent transversally isotopic links if and only if one can pass from one braid to another by conjugations in braid groups, positive Markov moves, and their inverses. 
\end{thm}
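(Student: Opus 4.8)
The plan is to prove both implications, with the forward (``if'') direction elementary and the converse (``only if'') carrying all the genuine difficulty. Throughout I would use the self-linking number $sl$ as the organizing invariant: Bennequin's formula gives, for the transverse closure of a braid $\beta$ on $n$ strands, $sl = e(\beta) - n$, where $e(\beta)$ is the exponent sum. Conjugation $\beta \mapsto \gamma\beta\gamma^{-1}$ fixes both $e$ and $n$; a positive stabilization $\beta \mapsto \beta\sigma_n$ (passing from $n$ to $n+1$ strands) raises each of $e$ and $n$ by one, so $sl$ is unchanged; a negative stabilization $\beta \mapsto \beta\sigma_n^{-1}$ would instead drop $sl$ by two.

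For the easy direction I would exhibit explicit transverse isotopies realizing the two allowed moves. Conjugation is realized by a braid isotopy cycling the closure through the pages of the open book supporting $\xi_{std}$, and positive stabilization by the standard local move that wraps one strand positively around the binding; each is visibly a transverse isotopy, so braids related by conjugations and positive Markov moves close to transversely isotopic links.

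For the hard direction I would take as input the classical Markov theorem: since transversely isotopic links are in particular smoothly isotopic, the braids $\beta_0$ and $\beta_1$ are connected by a finite sequence of conjugations together with positive \emph{and} negative (de)stabilizations, and the entire content is to eliminate the negative moves. The self-linking bookkeeping above shows that negative stabilizations and destabilizations occur in $sl$-balancing pairs, since $sl(\beta_0) = sl(\beta_1)$; but balancing is far from sufficient, because a negative move may be genuinely forced by the \emph{smooth} isotopy even when the links are transversely isotopic. To upgrade, I would realize the transverse isotopy as a movie and analyze the braid (characteristic) foliation induced on the spanning surface, following the Birman--Menasco foliation calculus used by Wrinkle; the Orevkov--Shevchishin route instead models the transverse link as the boundary of a pseudoholomorphic curve and invokes the adjunction/Bennequin inequality. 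In either framework one reduces the elementary changes to exchange moves, positive stabilizations, and positive destabilizations, and argues that every configuration apparently forcing a negative move carries a negative elliptic--hyperbolic singularity pair that can be cancelled or slid past by an exchange move without lowering $sl$.

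The main obstacle is exactly this last step: showing that negative stabilizations are never forced. This is the deep geometric heart of the theorem, and it is where both published proofs concentrate their effort --- either in the combinatorial bound controlling the elliptic and hyperbolic singularities so that a complexity-reducing induction terminates, or in the compactness and adjunction theory for holomorphic curves bounding the transverse knot. The self-linking constraint only rules out the naive obstructions and certifies that the reduction respects the transverse structure; the real work lies in the termination argument. Since the statement is quoted from \cite{OreShe03,Wri02}, in practice I would cite their reduction for this core step rather than reproduce it.
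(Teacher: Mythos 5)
The paper offers no proof of this statement for you to be compared against: it is imported verbatim from \cite{OreShe03,Wri02}, with the citation standing in for the proof. Your sketch is consistent with that treatment --- the forward direction and the self-linking bookkeeping ($sl = e(\beta) - n$, preserved by conjugation and by positive stabilization, dropped by $2$ under a negative one) are correct, you correctly locate the entire difficulty in showing that negative stabilizations are never forced, and for that core step you defer to the same two references the paper cites. So there is no gap relative to the paper itself, but be aware that what you have written is an annotated citation rather than a proof: if a self-contained argument were demanded, the elimination of negative moves (via Wrinkle's braid-foliation/MTWS approach or Orevkov--Shevchishin's analytic one) is precisely the content that would still have to be supplied, and your outline of those two methods should be checked against the sources before being asserted as their actual structure.
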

\emph{Markov moves}, also called \emph{positive or negative stabilizations}, are an operation on braids that increases the number of strands but preserves the topological knot type of the braids, see Figure \ref{fig:markov}. 

\begin{figure}
	\begin{center}
		\includegraphics[width=7 cm]{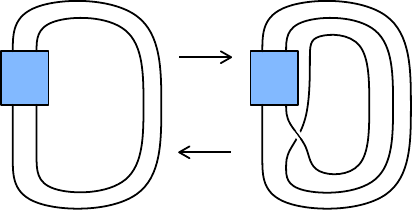}
	\end{center}
	\caption[A positive Markov move and its inverse.]{Going from left to right, a positive Markov move (or stabilization) and from right to left its inverse (or de-stabilization).}
	\label{fig:markov}
\end{figure}

Recall that a braid is \emph{quasipositive} if it is the product of conjugates of positive generators of the braid group. A knot is called \emph{quasipositive} if it is the closure of a quasipositive braid. It follows from \cite{BoiOre01} that Lagrangian fillable Legendrian knots are quasipositive. Using conjugations and positive Markov moves (and their inverses), Hayden proves the following:
\begin{thm}\label{thm:Hayden} \cite{Hay18}
Every quasipositive link has a quasipositive representative of minimal braid index.
\end{thm}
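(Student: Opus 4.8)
The plan is to pass from the braid to its transverse closure and carry out the reduction to minimal braid index using only the moves permitted by the transverse Markov theorem (Theorem~\ref{thm:tranverseMarkov}), namely conjugation and \emph{positive} (de)stabilization, while checking at each stage that quasipositivity is preserved. The guiding principle is that quasipositive braids are exactly transverse representatives of \emph{maximal} self-linking number, and that positive (de)stabilizations are precisely the moves that fix the self-linking number.

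First I would set up the self-linking bookkeeping. A quasipositive braid $\beta$ on $n$ strands built from $k$ positive bands $w_i\sigma_{j_i}w_i^{-1}$ bounds a quasipositive Seifert surface of Euler characteristic $n-k$, and its exponent sum (writhe) is $k$ since the conjugating syllables cancel. Hence its transverse closure $\hat\beta$ has $sl(\hat\beta)=k-n$. By the slice--Bennequin inequality, every transverse representative $T'$ of the underlying link $L$ satisfies $sl(T')\le -\chi_4(L)$, and since quasipositive surfaces realize the slice Euler characteristic (Rudolph~\cite{Rud93}, via Kronheimer--Mrowka~\cite{KroMro93}, the same input used elsewhere in this paper), we obtain $sl(\hat\beta)=-\chi_4(L)=\overline{sl}(L)$. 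Thus every quasipositive braid closes to a transverse representative of maximal self-linking number.

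Next I would pin down the strand/writhe data of minimal representatives using the generalized Jones conjecture (Dynnikov--Prasolov; LaFountain--Menasco): every braid representative of $L$ with $n$ strands and writhe $w$ obeys $|w-w_0|\le n-b$, where $b=\beta(L)$ is the braid index and $w_0$ is the (well-defined) writhe of a minimal-index representative. This gives $sl=w-n\le w_0-b$, so $\overline{sl}(L)=w_0-b$, realized on $b$ strands. Applying this to our quasipositive $\beta$, whose $sl$ is maximal, forces $w=w_0+(n-b)$: the braid sits on the extremal writhe line and carries exactly $n-b$ more bands than the minimum. Each positive destabilization drops both $n$ and $w$ by one, hence preserves the maximal self-linking number, and a negative destabilization would raise $sl$ above $\overline{sl}(L)$ and is therefore impossible here. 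So any Markov reduction of $\beta$ to braid index $b$ must be realized by $n-b$ positive destabilizations (interspersed with conjugations), and the target strand count is guaranteed to be $b$.

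The heart of the argument, and the step I expect to be hardest, is to show that these $n-b$ positive destabilizations can actually be performed \emph{through quasipositive braids}. Conjugation and positive stabilization visibly preserve quasipositivity (the latter simply appends a positive band $\sigma_n$), so the only issue is the destabilization step: whenever $n>b$, I must conjugate $\beta$ so that one of its bands becomes exactly $\sigma_{n-1}$ with the top strand otherwise unused, exhibiting a positive destabilization that deletes a single band and returns a quasipositive braid on $n-1$ strands of maximal $sl$; iterating then lands on a quasipositive braid on $b$ strands, proving Theorem~\ref{thm:Hayden}. Making the ``bring a band to $\sigma_{n-1}$'' move rigorous is the crux: it amounts to showing that a maximal-$sl$, non-minimal-index quasipositive braid is genuinely positively destabilizable rather than merely Markov-equivalent to a smaller braid through intermediate stabilizations. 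I would attack this with braid-foliation techniques in the spirit of Birman--Menasco together with the transverse destabilization results of Orevkov--Shevchishin~\cite{OreShe03}, controlling the band-generator normal form under conjugation so that the ``slack'' measured by $n-b$ is always visible as a removable band.
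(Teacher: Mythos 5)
This theorem is not proved in the paper at all: it is quoted verbatim from Hayden \cite{Hay18}, so the only meaningful comparison is with Hayden's published argument. Your first two steps are sound and do match the opening of that argument: a quasipositive $n$-braid with $k$ bands closes to a transverse link with $sl=k-n=-\chi_4$ by Rudolph's realization of the slice Euler characteristic via Kronheimer--Mrowka \cite{Rud93,KroMro93}, and the generalized Jones conjecture (Dynnikov--Prasolov, LaFountain--Menasco) then places the braid on the extremal ray $w=w_0+(n-b)$ of the writhe/strand cone, with $\overline{sl}(L)=w_0-b$ realized at minimal index. The problems begin at your third step, and they are genuine gaps, not details.

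First, the assertion that ``any Markov reduction of $\beta$ to braid index $b$ must be realized by $n-b$ positive destabilizations (interspersed with conjugations)'' is a non sequitur. The topological Markov theorem produces sequences that may contain cancelling pairs of negative stabilizations and destabilizations; your $sl$ bookkeeping only forbids a negative \emph{destabilization} performed at a moment when the intermediate braid already has maximal $sl$, so it does not force monotone, all-positive sequences. To invoke the transverse Markov theorem (Theorem~\ref{thm:tranverseMarkov}, \cite{OreShe03,Wri02}) instead, you must first know that $\hat\beta$ is \emph{transversely isotopic} to the closure of a minimal-index braid; equal topological type together with equal (even maximal) self-linking number does not imply transverse isotopy, since transversely non-simple knots exist. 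That transverse statement --- a maximal-$sl$ braid closure is transversely isotopic to a minimal-index closure --- is precisely the deep content of the LaFountain--Menasco/Dynnikov--Prasolov machinery, not a formal corollary of the cone inequality, and your proposal assumes it rather than proves it. Second, even granting a sequence of conjugations and positive (de)stabilizations, quasipositivity is not known to be preserved by positive \emph{destabilization}: that is exactly Orevkov's question, which Hayden's theorem only partially resolves at the level of links. Your plan to ``conjugate $\beta$ so that one of its bands becomes exactly $\sigma_{n-1}$ with the top strand otherwise unused'' is therefore not a hard step you can defer to braid foliations; it is essentially the whole theorem restated. Hayden's proof avoids this trap entirely: having placed the transverse link $\hat\beta$ in the same transverse class as a minimal-index closure, he does not track quasipositivity through any destabilization at all, but instead uses the fact that $\hat\beta$ bounds a piece of pseudoholomorphic/algebraic curve in $B^4$ (Rudolph) and applies the Boileau--Orevkov characterization \cite{BoiOre01} to the minimal-index presentation, so that the braided-surface theory outputs a quasipositive braid on $b$ strands directly. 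In short: your reduction scheme correctly identifies the inputs but leaves the two essential steps --- the transverse isotopy to minimal index, and quasipositivity surviving the reduction --- unproven, and the second of these cannot be repaired along the lines you sketch without resolving an open question that the cited theorem was designed to circumvent.
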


\begin{remark}\label{remark:quasipositive_transverse} 
Thus every transverse representative of a quasipositive link is transversely isotopic to a transverse quasipositive braid of minimal braid index. If a quasipositive knot has braid index 3, then any quasipositive transverse representative of that smooth knot type is transversely isotopic to a transverse 3-braid. For a candidate $\Lambda$ satisfying $U\prec\Lambda\prec U$, we obtain a transverse knot $\Lambda'$ which is a positive push off of $\Lambda$. If $\Lambda$ is smoothly the closure of a 3-braid, then so is $\Lambda'$. 
\end{remark}

\subsection{Embeddings of branched covers}

To begin, we cite Lemma 4.1 of \cite{CaoGal14} which gives a way of approximating a Lagrangian filling with a symplectic one:
\begin{lemma}\cite{CaoGal14}\label{lem:approximation}
Let $(X, \omega)$ be a strong filling of $(Y, \xi)$ with an oriented Lagrangian $L\subset X$ whose boundary is a Legendrian $\Lambda \subset Y$. Then, the Lagrangian surface $L$ may be $C^\infty$ approximated by a symplectic surface $L'$ that satisfies:
\begin{enumerate}
\item $\omega|_{L'} > 0$ and
\item $\partial L'$ is a positive transverse link smoothly isotopic to $\Lambda$. 
\end{enumerate}
\end{lemma}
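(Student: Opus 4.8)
The plan is to reduce to a local model via a Weinstein-type neighborhood theorem and then perturb $L$ explicitly as the graph of a small primitive one-form. The key observation is that a Lagrangian is characterized by $\omega|_L\equiv 0$, whereas we want $\omega|_{L'}>0$; since $L$ is a compact \emph{surface with nonempty boundary}, this defect can be corrected without a cohomological obstruction, which is precisely what makes the surface case tractable.

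First I would invoke the Lagrangian neighborhood theorem to identify a neighborhood of $L$ in $(X,\omega)$ symplectomorphically with a neighborhood of the zero section in $(T^*L,\omega_{\mathrm{can}})$, where $\omega_{\mathrm{can}}=d\lambda_{\mathrm{can}}$. Because $(X,\omega)$ is a strong filling, near $Y=\partial X$ there is a Liouville vector field $V$ transverse to $Y$ inducing $\xi=\ker\alpha$ with $\alpha=\iota_V\omega|_Y$. I would use a version of the neighborhood theorem for Lagrangians with Legendrian boundary that additionally matches this Liouville structure along $\partial L$, so that in the local model the contact structure on the boundary of the neighborhood is the standard one. This compatibility is the step that requires the most care.

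Next, recalling that for a section $s=\beta\colon L\to T^*L$ the pullback satisfies $s^*\omega_{\mathrm{can}}=d\beta$, I would produce a one-form whose differential is a positive area form. Concretely, fix a positive area form $\sigma$ on $L$ compatible with its orientation. Since $L$ is a compact surface with boundary it deformation retracts onto a one-complex, so $H^2(L;\RR)=0$ and $[\sigma]=0$; hence there exists a smooth $\beta$ with $d\beta=\sigma>0$. Setting $L'_\epsilon=\{(x,\epsilon\beta_x):x\in L\}$, the restriction of $\omega_{\mathrm{can}}$ pulls back to $\epsilon\,d\beta=\epsilon\sigma>0$, so $L'_\epsilon$ is a symplectic surface satisfying $\omega|_{L'_\epsilon}>0$, which gives condition (1). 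For $\epsilon$ small, $L'_\epsilon$ is $C^\infty$-close to the zero section $L$, yielding the approximation claim.

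Finally, condition (2) concerns the boundary $\partial L'_\epsilon$, which is the push-off of $\Lambda$ determined by $\epsilon\beta|_{\partial L}$. I would arrange the boundary values of $\beta$ inside the local contact model so that this push-off is transverse to $\xi$; the positivity $\omega|_{L'_\epsilon}>0$ together with the chosen orientation should pin down the sign, producing precisely the \emph{positive} transverse push-off of $\Lambda$ rather than the negative one. I expect the main obstacle to be exactly this boundary analysis: ensuring the neighborhood identification respects the Liouville/contact data, and then checking that an area-positive interior perturbation can be realized by a one-form whose boundary restriction yields the correct (positive) transverse knot. The interior perturbation is the classical Gromov-style trick, but converting a Legendrian boundary into a positive transverse link with the right sign is the delicate point.
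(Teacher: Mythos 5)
Your proposal takes a genuinely different route from the paper's. The paper, following \cite{CaoGal14}, outsources the interior approximation to Eliashberg's Lemma 2.3.A of \cite{Eli95}: one first perturbs $L$ only inside the collar $((-\epsilon,0]\times Y,\,d(e^t\alpha))$, where $L$ may be taken to be the cylinder $(-\epsilon,0]\times\Lambda$ (as it is in this paper's applications, where $L$ is assembled from concordances with cylindrical ends), so that $\omega$ becomes positive near $\partial L$ and the boundary becomes a positive transverse push-off; Eliashberg's lemma then spreads positivity over the interior while keeping the surface fixed near the boundary. Your graph-of-a-primitive construction replaces Eliashberg's lemma by the elementary fact $H^2(L;\RR)=0$; this simplification is legitimate, and it is available precisely because $L$ is Lagrangian, so a cotangent model exists (Eliashberg's lemma is the harder statement needed for surfaces that are merely $\omega$-non-negative, where there is no such model). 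The price is the adapted neighborhood theorem you flag, which does hold here: taking $J^1\Lambda$-coordinates $(q,p,z)$ near $\Lambda$ in $Y$ with $\alpha=dz-p\,dq$, the map $(t,q,p,z)\mapsto(t,q;p_t,p_q)=(t,q;-e^tz,-e^tp)$ is a symplectomorphism from the collar onto a neighborhood of the zero section of $T^*L$ over the collar of $\partial L$, carrying $Y$ to $T^*L|_{\partial L}$, and it glues to the usual Weinstein model over the interior.

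There is, however, one genuine error, and it sits exactly at the step you defer: positivity of $\omega|_{L'_\epsilon}$ does \emph{not} pin down the boundary behaviour, even up to sign. In the coordinates above, the graph of $\epsilon\beta$ with $\beta=f\,dt+g\,dq$ has boundary $\{p=-\epsilon g(0,q),\ z=-\epsilon f(0,q)\}$, on which $\alpha$ evaluates against the tangent as $\epsilon\,(g-\partial_qf)|_{t=0}$, while the interior condition $d\beta>0$ reads $\partial_tg-\partial_qf>0$; these conditions are independent. Concretely, $\beta=t\,dq$ has $d\beta=dt\wedge dq>0$, yet its graph has boundary exactly $\Lambda$: a symplectic surface with Legendrian, non-transverse boundary. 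So conclusion (1) never implies conclusion (2); all that positivity forces, via Stokes ($\int_{\partial L}\beta=\int_L d\beta>0$), is transversality ``on average.'' The repair is to prescribe $\beta$ on the collar first, e.g.\ $\beta=(1+t)\,dq$, whose graph has boundary $\{p=-\epsilon,\ z=0\}$, the standard positive transverse push-off of $\Lambda$, and then to solve the extension problem of finding $\beta$ on all of $L$ with these collar values and $d\beta>0$ everywhere. This is solvable because the only obstruction is the Stokes constraint, which has the correct sign: choose a positive area form $\sigma$ equal to $dt\wedge dq$ on the collar with $\int_L\sigma=\int_{\partial L}\beta>0$; then for any extension $\tilde\beta$ of the collar form, $\sigma-d\tilde\beta$ is a closed $2$-form compactly supported in the interior with total integral zero, hence equals $d\gamma$ for a compactly supported $1$-form $\gamma$, and $\beta\defeq\tilde\beta+\gamma$ works. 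With this boundary step made explicit, your argument closes up and is a clean, more self-contained alternative to the paper's.
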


This result follows from Lemma 2.3.A proved by Eliashberg \cite{Eli95}:
\begin{lemma}\cite{Eli95}
Let $F$ be a connected surface with boundary in a 4-dimensional symplectic manifold $(X,\omega)$. Suppose that $\omega|_F$ is positive near $\partial F$ and non-negative
elsewhere. Then $F$ can be $C^\infty$-approximated by a surface $F'$ which coincides with $F$ near $\partial F = \partial F'$ and such that $\omega|_{F'}>0$.
\end{lemma}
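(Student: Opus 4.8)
The plan is to reduce the statement to an elementary perturbation problem for a non-negative $2$-form on $F$ and then resolve it by a local tilting of the tangent planes along the locus where $F$ fails to be symplectic. Fix an orientation and an auxiliary area form $dA$ on $F$, and write $\omega|_F = g\, dA$ with $g \geq 0$ and $g > 0$ on a collar of $\partial F$. The \emph{bad set} $Z = \{g = 0\}$ is then a closed subset contained in the interior of $F$ (compact in the situations of interest). The goal becomes to perturb the embedding $\iota\colon F \hookrightarrow X$ to a nearby embedding $\iota'$, agreeing with $\iota$ near $\partial F$, so that $(\iota')^*\omega > 0$ everywhere. I would first note why non-negativity is the essential hypothesis: the class of $\iota^*\omega$ rel $\partial F$ is preserved under perturbations fixed on the boundary, and, more to the point, a $C^\infty$-small perturbation cannot reverse the sign of $\omega|_F$ at a point where it is strictly negative, so one genuinely needs $g \geq 0$.

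Next I would set up the local model at a point $p \in Z$. Since $\omega|_{T_pF} = 0$, the plane $T_pF$ is Lagrangian, so I can choose Darboux coordinates $(x_1, y_1, x_2, y_2)$ near $p$ in which $\omega = dx_1\wedge dy_1 + dx_2 \wedge dy_2$ and $T_pF = \{y_1 = y_2 = 0\}$. Writing $F$ locally as a graph $y_i = h_i(x_1, x_2)$ with $h_i(p) = 0$ and $dh_i(p) = 0$, a direct computation gives
\[
\omega|_F = \left(\frac{\partial h_1}{\partial x_2} - \frac{\partial h_2}{\partial x_1}\right)\, dx_1 \wedge dx_2,
\]
so that $F$ is symplectic near $p$ exactly when this ``curl'' term is positive. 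Crucially, the orientations of $F$ and of $X$ single out a canonical direction in which to tilt a Lagrangian tangent plane so as to make $\omega|_F$ positive rather than negative; this is what will let the local models be patched consistently.

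The perturbation itself I would take to be a graphical twist: replace $(h_1, h_2)$ by $(h_1 + \varepsilon\phi\, x_2,\; h_2 - \varepsilon\phi\, x_1)$ for a cutoff $\phi$ and a small parameter $\varepsilon > 0$. On the region $\{\phi \equiv 1\}$ this raises the curl term by exactly $2\varepsilon$, hence makes $g$ positive there, and for a fixed cutoff the perturbation is $C^\infty$-small and compactly supported in the chart as $\varepsilon \to 0$. One cannot, however, keep the curl increase non-negative everywhere: integrating the change $\varepsilon\,(2\phi + x_1\partial_{x_1}\phi + x_2\partial_{x_2}\phi)$ over the plane gives zero, so the transition annulus where $\phi$ decays necessarily contributes a negative amount. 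The remedy is to arrange that this transition region sits inside $\{g > 0\}$, where a sufficiently small $\varepsilon$ cannot create new zeros of $g$.

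Finally I would globalize. Choose nested neighborhoods $Z \subset U_1 \Subset U_2$ with $\overline{U_2}\setminus U_1$ disjoint from $Z$, so that $g \geq c > 0$ on this compact collar; take $\phi \equiv 1$ on $U_1$ and $\phi \equiv 0$ outside $U_2$, with the decay confined to $\{g \geq c\}$. Using the canonical positive tilting direction coming from the orientations, I would assemble the local twists into a single perturbation of $\iota$ supported in $U_2$, and choose $\varepsilon$ small enough that the negative contribution in the transition collar is dominated by $c$ and that the displacement is as $C^\infty$-small as desired. Since the support is away from $\partial F$, the resulting $F'$ coincides with $F$ near the boundary and satisfies $\omega|_{F'} > 0$. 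I expect the main obstacle to be precisely this globalization: making the tilting direction consistent over a bad set $Z$ that may be two-dimensional (for instance if $F$ is Lagrangian on an open region), and carrying out the quantitative bookkeeping — compactness of $Z$ together with openness of the positivity condition — so that no new zeros of $\omega|_F$ are introduced as the local corrections are combined.
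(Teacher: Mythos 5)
First, a point of comparison: the paper does not prove this lemma at all --- it is quoted from \cite{Eli95} as background for Lemma 2.1, so there is no in-paper argument to measure your proposal against; what follows assesses it on its own terms. Your local analysis is correct: the graph formula $\omega|_F = \bigl(\partial_{x_2}h_1 - \partial_{x_1}h_2\bigr)\,dx_1\wedge dx_2$, the fact that the twist raises this curl by $2\varepsilon$ where $\phi\equiv 1$, the observation that the change $\varepsilon\,(2\phi + x_1\partial_{x_1}\phi + x_2\partial_{x_2}\phi)$ integrates to zero (so some loss is unavoidable), and the strategy of confining that loss to the region where $\omega|_F \geq c\,dA$ are all sound.

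The genuine gap is the globalization, which you flag but do not fill, and it is a real obstacle for two concrete reasons. First, your remedy ``arrange that the transition region sits inside $\{g>0\}$'' cannot be implemented chart by chart: if $Z$ has nonempty interior (e.g.\ $F$ is Lagrangian on an open region, exactly the case you worry about), then any small Darboux chart centered at an interior point of $Z$ has its transition annulus inside $Z$, where there is no positivity reserve. One is therefore forced to use a single cutoff equal to $1$ on a whole neighborhood $U_1\supset Z$, and then one needs a twist defined globally over $U_1$, which is not covered by one graphical Darboux chart. Second, assembling the local twists $v_\alpha$ by a partition of unity $\{\rho_\alpha\}$ fails: writing $\alpha_\alpha = \iota^*(\iota_{v_\alpha}\omega)$, the first-order change of $\omega|_F$ under $v=\sum_\alpha \rho_\alpha v_\alpha$ is $\sum_\alpha\bigl(\rho_\alpha\,d\alpha_\alpha + d\rho_\alpha\wedge\alpha_\alpha\bigr)$, and the cross terms $d\rho_\alpha\wedge\alpha_\alpha$ are of the same order $\varepsilon$ as the positive gain and carry no sign. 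A differential inequality cannot be patched this way, and the ``canonical tilting direction'' does not help, because pointwise convexity of the positivity condition does not commute with cutting off.

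The fix is to linearize globally rather than locally, and it turns your zero-integral observation into the whole proof. By Cartan's formula, the first-order change of $\iota^*\omega$ under a perturbation along a vector field $v$ is $d\bigl(\iota^*(\iota_v\omega)\bigr)$, and since $\omega$ is nondegenerate, \emph{every} $1$-form $\alpha$ on $F$ compactly supported in the interior arises as $\iota^*(\iota_v\omega)$ for such a $v$. So it suffices to find a $2$-form $\mu$ supported in the interior of $F$ with $\mu>0$ on a neighborhood of $Z$ and $\int_F\mu=0$; this exists precisely because $g\geq c>0$ on a nonempty open set away from $Z$, where the negative mass can be placed. Since $H^2_c(\mathrm{int}\,F;\RR)\cong\RR$ via integration for a connected oriented surface, the condition $\int_F\mu=0$ is exactly the condition that $\mu=d\alpha$ for some compactly supported $\alpha$. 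Flowing for small time $\varepsilon$ along the corresponding $v$ gives $\iota_\varepsilon^*\omega = g\,dA + \varepsilon\mu + O(\varepsilon^2)$, which is positive everywhere for $\varepsilon$ small, is $C^\infty$-close to $F$, preserves embeddedness, and fixes a neighborhood of $\partial F$. In short: your local model and your accounting of where positivity can be borrowed are right, but the chart-by-chart assembly must be replaced by this one-step cohomological construction (or an equivalent global argument); as written, the proposal's crucial step is a hope rather than a proof.
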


\begin{figure}
	\begin{center}
		\begin{tikzpicture}
			\node[inner sep=0] at (0,0) {\includegraphics[width=8 cm]{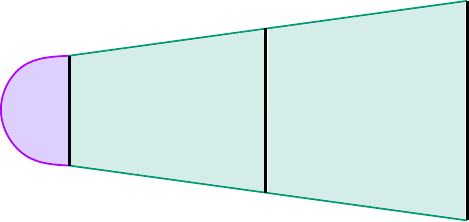}};
			\node at (-3.5,0){$B^4$};
			\node at (-2.8,-1.6){$\Sigma_p(U')$};
			\node at (0.5,-2){$\Sigma_p(\Lambda')$};
			\node at (3.8,-2.4){$\Sigma_p(U')$};
		\end{tikzpicture}
		\caption[$\Sigma_p(C')\cup B^4$, a filling of $\Sigma_p(U')$.]{$\Sigma_p(C')\cup B^4$, a filling of $\Sigma_p(U')$. $\Sigma_p(C')$ is represented by the green area.}
		\label{fig:filling}
	\end{center}
\end{figure}

Additionally, we use Theorem 1.2 of \cite{Cha10}:
\begin{thm} \cite{Cha10} \label{thm:cylindrical}
Consider the standard contact $S^3$ and let $U$ be the standard Legendrian unknot with $tb(U)=-1$. Let $C$ be an oriented Lagrangian cobordism from $U$ to itself, then there is a compactly supported symplectomorphism $\phi$ of $\RR\times S^3$ such that $\phi(C) = \RR\times U$.
\end{thm}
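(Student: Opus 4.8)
The plan is to reduce the statement to a rigidity result for Lagrangian disk fillings of the standard unknot and then to exploit Gromov's foliation of the symplectic ball by holomorphic disks. First I would record the topological constraint: for a Lagrangian cobordism $C$ from $U$ to itself, Chantraine's relation \cite{Cha10} between the Thurston--Bennequin numbers of the ends and the Euler characteristic gives $tb(U)-tb(U)=-\chi(C)$, hence $\chi(C)=0$ and $C$ is a cylinder. This already shows $C$ is a concordance; the real work is to upgrade ``cylinder'' to ``standard cylinder up to symplectomorphism.''

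Next I would cap off the negative end. The unknot $U$ is the Legendrian boundary of the standard Lagrangian disk $D\subset(B^4,\omega_{std})$, and attaching a cylindrical end to $B^4$ along $S^3$ yields a completion symplectomorphic to $(\CC^2,\omega_{std})$. Gluing $D$ to the negative end of $C$ produces an exact Lagrangian $L=D\cup C$ inside $\CC^2$, asymptotic to $\RR\times U$ at its single positive cylindrical end. Since $\chi(L)=\chi(D)+\chi(C)=1$, the surface $L$ is a Lagrangian \emph{disk} filling the unknot. The theorem then follows if I can show that any such filling is carried to the standard filling $D\cup(\RR\times U)$ by a compactly supported symplectomorphism fixing $D$ near the origin: such a map sends $C$ to $\RR\times U$.

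The core is the holomorphic-curve analysis. I would fix a compatible almost complex structure on $\CC^2$ that is cylindrical (translation invariant, adapted to $\alpha$) near the positive end, and study the moduli space of holomorphic disks with boundary on $L$ in the class of the standard Gromov disks. In dimension four these disks enjoy automatic transversality, so the moduli space is a smooth manifold of the expected dimension. The key inputs are: exactness of $L$ rules out disk and sphere bubbling of positive area; the Maslov index of $L$, computed from $tb(U)=-1$, pins the index so the disks are rigid in the correct sense; and SFT/Gromov compactness, using that the Reeb dynamics over the cylindrical end $\RR\times U$ are understood, prevents escape to infinity and breaking. Together these show that the holomorphic disks foliate the region bounded by $L$, exactly as they foliate the region bounded by the standard disk.

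The main obstacle I expect is establishing this compactness and filling-by-foliation cleanly: one must exclude all breaking and bubbling and verify that near the cylindrical end the moduli space matches the model foliation of $\RR\times U$, so that the two foliations coincide outside a compact set. Granting the foliation, the final step is routine. Comparing the foliation of the region bounded by $L$ with that bounded by the standard disk produces a diffeomorphism of $\CC^2$ intertwining them; it can be promoted to a symplectomorphism by a Moser-type argument using exactness (following \cite{Eli95} in spirit), and it is the identity outside a compact set because both Lagrangians agree with $\RR\times U$ there. Restricting this compactly supported symplectomorphism to $C$ gives $\phi(C)=\RR\times U$, as desired.
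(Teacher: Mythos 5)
Your first half is exactly the reduction the paper (following Chantraine) uses: the Thurston--Bennequin relation forces $\chi(C)=0$, and capping the negative end with the standard Lagrangian disk turns $C$ into a Lagrangian plane $L\subset\RR^4$ that is flat at infinity (indeed, in this setting $L$ is also flat near the origin, since the negative end of $C$ is already the cylinder over $U$). At this point the paper simply invokes Theorem 1.1.A of Eliashberg--Polterovich \cite{EliPol96}, which is precisely the statement that such an $L$ is carried to the flat plane by a compactly supported ambient Hamiltonian isotopy; that citation is the entire proof.

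The gap is in your attempt to reprove that rigidity statement by a holomorphic-disk foliation, and it is not merely a matter of omitted detail: the argument as set up cannot start. You invoke exactness of $L$ to exclude bubbling, but the same Stokes-theorem argument (together with $\pi_2(\CC^2,L)=0$, since $L\cong\RR^2$) shows that \emph{every} compact holomorphic disk with boundary on $L$ is constant, so the moduli space of ``standard Gromov disks'' that is supposed to furnish your foliation is empty. Relatedly, ``the region bounded by $L$'' does not exist: $L$ has codimension $2$ and its complement in $\CC^2$ is connected. To make a curve-theoretic proof work one must change the geometric setup --- either compactify to $(\CC\mathbb{P}^2,\RR\mathbb{P}^2)$ and work with closed curves (lines or conics) and positivity of intersection, which is essentially what Eliashberg--Polterovich do, or use punctured curves with boundary on $L$ asymptotic to Reeb chords of $U$ at the cylindrical end --- and in either setup the transversality, compactness, and matching-at-infinity analysis is the actual content of \cite{EliPol96}, not a routine verification. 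Finally, even granting the rigidity statement, your last step glosses over a real issue: a compactly supported symplectomorphism of $\RR^4$ taking $L$ to the flat plane need not restrict to a compactly supported symplectomorphism of the symplectization $\RR\times S^3\cong\RR^4\setminus\{0\}$; for that it must be the \emph{identity} near the origin, not merely fix $D$ setwise, and arranging this (by composing with a symplectomorphism preserving the flat plane and then killing the germ at $0$) is an additional correction that Chantraine's write-up in \cite{Cha10} carries out.
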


This theorem follows from a result of Eliashberg and Polterovich, Theorem 1.1.A of \cite{EliPol96}:

\begin{thm} \cite{EliPol96}
Any flat at infinity Lagrangian embedding of $\RR^2$ into the standard symplectic $\RR^4$ is isotopic to the flat embedding via an ambient compactly supported Hamiltonian isotopy of $\RR^4$.
\end{thm}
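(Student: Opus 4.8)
The plan is to prove this by Gromov's theory of pseudoholomorphic curves, in the form of \emph{filling by holomorphic disks}. After an affine symplectic change of coordinates I would identify $\RR^4$ with $\CC^2$, writing $z_j = q_j + i p_j$, so that the flat embedding becomes the standard Lagrangian plane $P = \{p_1 = p_2 = 0\}$. By hypothesis the given Lagrangian $L$ coincides with $P$ outside some ball $B_R$. The goal is to produce a compactly supported Hamiltonian isotopy $\phi_t$ with $\phi_0 = \mathrm{id}$ and $\phi_1(P) = L$. The single structural fact that drives everything is that $L \cong \RR^2$ has $H^1(L;\RR) = 0$, so $L$ is an \emph{exact} Lagrangian: the restriction to $L$ of the Liouville primitive of $\omega$ is exact.

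First I would record the model foliation and set up the corresponding moduli problem. Choose an almost complex structure $J$ tamed by $\omega$ and equal to the standard integrable structure $J_0$ outside $B_R$. For the plane $P$ the region $\{p_2 = 0,\ p_1 \geq 0\}$ is foliated by the flat $J_0$-holomorphic half-planes $\{z_2 = c,\ \mathrm{Im}\, z_1 \geq 0\}$, $c \in \RR$, whose boundary arcs $\{z_2 = c,\ p_1 = 0\}$ foliate $P$; using the two sides $p_1 \geq 0$ and $p_1 \leq 0$ gives a model of a full neighborhood of $P$ foliated by such disks. I would then define the moduli space $\mathcal{M}(L,J)$ of $J$-holomorphic disks with boundary on $L$ lying in the homotopy class of this model family, cut out by enough boundary/point constraints that its expected dimension equals that of the model.

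The heart of the argument, and the step I expect to be the main obstacle, is to show that $\mathcal{M}(L,J)$ is a smooth manifold, diffeomorphic to the model, whose disks foliate a neighborhood of $L$ with their boundaries sweeping out $L$. Three analytic points must be controlled. \emph{Transversality}: in real dimension $4$ one invokes automatic regularity together with positivity of intersections so the relevant disks are regular. \emph{Compactness}: exactness of $L$ rules out disk bubbling, since any bubble would carry positive $\omega$-area but zero boundary action, and $\pi_2(\CC^2) = 0$ rules out sphere bubbling, while the flat-at-infinity hypothesis together with a monotonicity/maximum-principle estimate confines the disks to a fixed compact region and prevents escape to infinity. \emph{Foliation}: positivity of intersections in dimension $4$ forces distinct members of the family to be disjoint, so they genuinely foliate, and near $\partial B_R$, where $J = J_0$ and $L = P$, the family degenerates exactly into the standard one, allowing the actual and model foliations to be matched.

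Granting the foliation, I would finish as follows. Matching the holomorphic foliation for $L$ with the standard foliation for $P$ disk by disk produces a diffeomorphism $\Psi$ of $\RR^4$, equal to the identity outside a compact set, with $\Psi(P) = L$. A Moser-type argument then corrects $\Psi$ to a symplectomorphism still carrying $P$ to $L$ and still trivial at infinity: the forms $\Psi^* \omega$ and $\omega$ agree outside a compact set and their difference is exact in $H^2_c(\RR^4;\RR) = 0$, so Moser applies with compact support. This yields a compactly supported symplectic isotopy from $P$ to $L$. Finally I would upgrade it to a Hamiltonian isotopy: the compactly supported flux of such an isotopy of $\RR^4$ lives in $H^1_c(\RR^4;\RR) = 0$, so it vanishes, and the area class in $H^1(L;\RR) = 0$ measuring the symplectic area swept between $P$ and $L$ vanishes as well. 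Hence the isotopy may be taken to be generated by a compactly supported Hamiltonian, which is exactly the asserted compactly supported Hamiltonian isotopy of $\RR^4$ carrying the flat embedding to $L$.
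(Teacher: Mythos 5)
This statement is not proved in the paper at all: it is quoted verbatim from Eliashberg--Polterovich \cite{EliPol96} as the external input from which Chantraine's Theorem \ref{thm:cylindrical} follows, so there is no in-paper proof to compare against. Your sketch does follow the same circle of ideas as the actual proof in \cite{EliPol96} (filling by holomorphic curves, positivity of intersections, a foliation-straightening endgame), but as written it has two genuine gaps, each at a step you yourself flag as crucial.

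First, the moduli problem is never actually set up. Your model curves $\{z_2=c,\ \mathrm{Im}\,z_1\geq 0\}$ are half-planes of \emph{infinite} symplectic area, not disks, so the Fredholm and Gromov-compactness package you invoke does not apply to them as stated. Worse, the exactness of $L$ that you use to exclude bubbling excludes much more: by Stokes, \emph{every} compact $J$-holomorphic disk with boundary on an exact Lagrangian has zero area, hence is constant. So the family you want to study cannot consist of honest compact disks with boundary on $L$; the objects are necessarily open curves with an end at infinity. The real content of \cite{EliPol96} is precisely the construction of a compact moduli problem out of this situation --- compactifying the ambient space and the Lagrangian so that the model curves become compact holomorphic curves meeting the locus at infinity in a controlled way --- and that step is absent from your proposal.

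Second, the endgame does not go through as written. The Moser argument requires the interpolation $(1-t)\omega + t\Psi^*\omega$ to be nondegenerate for all $t$, and convexity fails for symplectic forms in general; one must build $\Psi$ adapted to the symplectic geometry (e.g.\ using two transverse holomorphic foliations) before Moser applies, and additionally choose the primitive so the Moser flow preserves $P$. Even granting all that, Moser only corrects the single map $\Psi$ to a compactly supported \emph{symplectomorphism} carrying $P$ to $L$; it does not produce a symplectic \emph{isotopy} from the identity, which is what the theorem asserts. To bridge that gap one needs either Gromov's theorem that $\mathrm{Symp}_c(\RR^4)$ is connected (a second deep theorem, not quoted in your argument), or, as in \cite{EliPol96}, a direct construction: straighten the foliation so that $L$ becomes a Lagrangian section --- the graph of a closed, hence exact, $1$-form --- of a fibration whose zero section is $P$, and interpolate linearly; this is where the compactly supported Hamiltonian isotopy is actually produced and where $H^1(\RR^2)=0$ genuinely enters. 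Your final flux computation is correct, but it is applied to an isotopy that the preceding steps never construct.
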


With these we can prove the following two obstructions:

\newtheorem*{thm:contact_hypersurface}{Theorem \ref{thm:contact_hypersurface}}
\begin{thm:contact_hypersurface}
	If $\Lambda$ is a Legendrian knot which satisfies $U\prec\Lambda\prec U$, then any $p$-fold cyclic branched cover $\Sigma_p(\Lambda')$ of $S^3$ branched over the transverse push-off $\Lambda'$ of $\Lambda$ embeds as a contact type hypersurface in $(B^4,\xi_{std})$. Moreover, $\Sigma_p(\Lambda')$ is Stein fillable and has a Stein filling which embeds in $(B^4,\xi_{std})$.
\end{thm:contact_hypersurface}

Note that since $\Lambda$ must be quasipositive, fillability of $\Sigma_p(\Lambda')$ also follows from a result of \cite{Pla06}.

\begin{proof}[Proof of Theorem \ref{thm:contact_hypersurface}]
Suppose we have some Legendrian knot $\Lambda$ in $S^3$ such that $U\prec\Lambda\prec U$ in $\RR_t\times S^3$ (which is symplectomorphic to $\RR^4\setminus B^4$). Suppose $\Lambda \in \{0\}\times S^3$. Let $C_1$ be the Lagrangian cylinder from $U$ to $\Lambda$ and $C_2$ be the Lagrangian cylinder from $\Lambda$ to $U$, see Figure \ref{fig:ulambdau}. By Theorem \ref{thm:cylindrical}, $C_1\cup C_2$ is Hamiltonian isotopic to the product cylinder $\RR\times U$.

Fill in $C_1$ at the negative end by a Lagrangian disk in $B^4$. Then $C\defeq D^2\cup C_1\cup C_2$ is a Lagrangian filling of the unknot $U$ by a standardly embedded disk. By Lemma \ref{lem:approximation}, there is a symplectic approximation of $C$, call it $\phi(C) = C'$ with transverse boundary $\partial C' = U'$ where $U'$ is the transverse unknot with self linking number $-1$.  The $p$-fold cyclic branched cover of $S^3$ branched over $U'$ is the standard $S^3$ (see Lemma 2.4 of \cite{CasEtn19}).

The $p$-fold cyclic branched cover of $\RR_t\times S^3$ branched over $C'$, which we will call $\Sigma_p(C')$ is the standard four ball, $(B^4,\xi_{std})$. For $\phi$ a small enough perturbation, there is some $t$ in a neighbourhood of 0 such that $C'$ is transverse to $\{t\}\times S^3$, and $\Lambda'=C'\cap (\{t\}\times S^3)$ is a transverse push-off of $\Lambda$ in $\{t\}\times S^3$. Then $\Sigma_p(\Lambda')$, the $p$-fold cyclic branched cover of $S^3$, branched over $\Lambda'$ is a contact type hypersurface in $(B^4,\xi_{std})$. We illustrate this in Figure \ref{fig:filling}.

Taking the negative end of $\Sigma_p(C')$ bounded by $\Sigma_p(\Lambda')$ gives a Stein filling $X$ of $\Sigma_p(\Lambda')$ which embeds in $(B^4,\xi_{std})$. 
\end{proof}

\begin{figure}
	\begin{center}
		\begin{tikzpicture}
			\node[inner sep=0] at (0,0) {\includegraphics[width=6 cm]{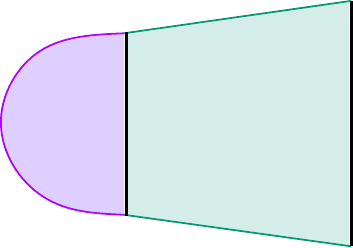}};
			\node at (-2,0){$X$};
			\node at (1,0){$V$};
			\node at (-0.5,-2.1){$\Sigma_p(\Lambda')$};
			\node at (3,-2.6){$\Sigma_p(U')$};
		\end{tikzpicture}
		\caption[$V \cup X$, a filling of $\Sigma_p(U')$]{$V \cup X$, a filling of $\Sigma_p(U')$, where $X$ is a filling of $\Sigma_p(\Lambda')$.}
		\label{fig:halffilling}
	\end{center}
\end{figure}

\newtheorem*{thm:fillings_embed}{Theorem \ref{thm:fillings_embed}}
\begin{thm:fillings_embed}
	If $\Lambda$ is a Legendrian knot which satisfies $U\prec\Lambda\prec U$, then any filling of the $p$-fold cyclic branched cover $\Sigma_p(\Lambda')$ of $S^3$ branched over the transverse push-off $\Lambda'$ must embed in a blow up of $B^4$ and must have negative definite intersection form.
\end{thm:fillings_embed}

\begin{proof}
Suppose we have a Legendrian knot $\Lambda$ in $S^3$ such that $\Lambda\prec U$ in $\RR_t\times S^3$. Recall the construction from Theorem \ref{thm:contact_hypersurface}, of the branched cover $\Sigma_p(C')$. Recall also the 4-manifold $X$ which is a filling of the 3-manifold $\Sigma_p(\Lambda')$, and which embeds in $\Sigma_p(C')$. Let 
$$V\defeq  \Sigma_p(C') \setminus X.$$ 

Let $X_0$ be any filling of $\Sigma_p(\Lambda')$ and let $W = X_0\cup V$ be obtained by gluing $X_0$ to $V$ along $\Sigma_p(\Lambda')$. The boundary $\partial(W) = \Sigma_p(U')=S^3$. By a theorem of McDuff (Theorem 1.7 of \cite{McD90}) or of Gromov (p 311 of \cite{Gro85}), $W$ is necessarily a blowup of $B^4$ with the standard symplectic structure, as $S^3$ has a unique minimal filling, $B^4$.

Finally, since any surface embedded in $X$ is embedded in $B^4\#n\overline{\cptwo}$ for some $n\geq 0$, $X$ must be negative definite.
\end{proof}

\subsection{Using Gompf's 3-dimensional 2-plane field invariant}

Next, we prove an obstruction to the concordance $U \prec\Lambda\prec U$ coming from the $d_3$ invariant of $\Sigma_2(\Lambda')$, proved in Theorem 4.16 of \cite{Gom98}, and apply it to the case where $\Lambda'$ is a transverse knot which is the closure of a 3-braid.

For a given contact 3-manifold $(M,\xi)$, we define the $d_3$ invariant on the homotopy type of the 2-plane field:
\begin{thm}\cite{Gom98}
Suppose we have a contact 3-manifold $(M,\xi)$, and suppose we have an almost complex 4-manifold $(X,J)$ such that $\partial X = M$, with $\xi$ induced by the complex tangencies: $\xi = (TM)\cap J(TM)$. Let $\sigma(X)$ denote the signature of $X$, and let $\chi(X)$ denote the Euler characteristic of $X$. For $c_1(\xi)$ a torsion class, the rational number 
$$d_3(\xi) = \frac{c_1^2(X,J)-3\sigma(X)-2\chi(X)}{4}$$ 
is an invariant of the homotopy type of the 2-plane field $\xi$. 
\end{thm}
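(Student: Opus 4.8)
The plan is to prove \emph{well-definedness}: the stated quantity must be shown independent of the choice of almost complex filling $(X,J)$, after which invariance under homotopy of $\xi$ is automatic, since a filling inducing $\xi$ induces any homotopic plane field as well (homotope $J$ in a collar). The crux is the classical identity $c_1^2(Z,J)=2\chi(Z)+3\sigma(Z)$, valid for any \emph{closed} almost complex $4$-manifold $(Z,J)$; equivalently, $c_1^2-3\sigma-2\chi$ is the Hirzebruch-type defect that vanishes in the closed case. The whole argument is an exercise in reducing the filling situation to this closed identity by a gluing.

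First I would take two almost complex fillings $(X_1,J_1)$ and $(X_2,J_2)$ of $(M,\xi)$ and form the closed oriented $4$-manifold $Z = X_1\cup_M \overline{X_2}$, orientation-reversing the second piece. For the topological terms the behaviour is standard: $\chi(Z)=\chi(X_1)+\chi(X_2)$ since $\chi(M)=0$ for the closed $3$-manifold $M$, and $\sigma(Z)=\sigma(X_1)-\sigma(X_2)$ by Novikov additivity of signature under gluing along the full boundary. The first Chern term is where the torsion hypothesis enters: because $c_1(\xi)$ is torsion, each $c_1(X_i,J_i)$ restricts to a torsion class on $M$ and hence admits a canonical rational lift to $H^2(X_i,M;\QQ)$, so that $c_1^2(X_i,J_i)\in\QQ$ is well-defined and the squares add across the separating hypersurface, giving $c_1^2 = c_1^2(X_1,J_1)-c_1^2(X_2,J_2)$ for the glued data.

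The subtle point, and the step I expect to be the main obstacle, is that $J_1$ and $\overline{J_2}$ need not assemble into a genuine almost complex structure on all of $Z$. Viewing a compatible almost complex structure as a section of the bundle with fibre $SO(4)/U(2)\cong S^2$, the two fillings induce the \emph{same} homotopy class of plane field on $M$, so their sections agree over the $3$-skeleton of $Z$; the only residual obstruction to a global section lives in the top cell and is measured by an integer in $\pi_3(S^2)\cong\ZZ$. I would therefore work with the almost complex structure defined on $Z$ minus a point, for which $c_1^2-2\chi-3\sigma$ equals a fixed multiple of this top obstruction rather than zero. The key claim to establish is that this obstruction is determined entirely by the homotopy class of $\xi$ on $M$, being the relative obstruction between two sections restricting to the same field, and hence contributes identically to the two fillings. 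Substituting the additivity relations into the corrected closed identity and cancelling this common term then forces $\tfrac14\bigl(c_1^2(X_1,J_1)-3\sigma(X_1)-2\chi(X_1)\bigr) = \tfrac14\bigl(c_1^2(X_2,J_2)-3\sigma(X_2)-2\chi(X_2)\bigr)$.

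The delicate bookkeeping is thus concentrated in that last step: identifying the top-cell obstruction with the three-dimensional obstruction-theoretic (Hopf) invariant of the plane field, and tracking the rational corrections coming from the torsion of $c_1(\xi)$, so that the defect term is genuinely an invariant of $[\xi]$ and drops out of the comparison. Once that identification is in place, both the independence of $(X,J)$ and the homotopy invariance of $d_3(\xi)$ follow simultaneously, completing the proof.
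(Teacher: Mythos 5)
You should first note that the paper offers no proof of this statement: it is quoted directly from Gompf's work \cite{Gom98} (Theorem 4.16 there), so your attempt can only be compared with the argument in the literature. Your skeleton is in fact the right family of ideas and matches that argument in outline: glue two almost complex fillings into a closed oriented $Z=X_1\cup_M(-X_2)$, use $\chi(M)=0$ and Novikov additivity, use the torsion hypothesis on $c_1(\xi)$ to make sense of the rational self-intersection numbers, and compare with the closed identity $c_1^2=2\chi+3\sigma$, corrected by a top-cell obstruction since $Z$ need not be almost complex. The gap lies exactly in the two sentences you yourself flag as ``the key claim,'' and it is fatal as stated.

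First, ``$\overline{J_2}$'' is not a defined object: orientation reversal exchanges self-dual and anti-self-dual $2$-forms, so an almost complex structure on $X_2$ compatible with its orientation induces \emph{no} almost complex structure on $-X_2$, hence no section of the twistor bundle of $Z$ over that side; what does pass across orientation reversal is only the induced spin$^c$ structure, and constructing the correct section over $-X_2$ from it is part of the real work. Second, and more seriously, the top-cell obstruction is \emph{not} determined by the homotopy class of $\xi$. Your own additivity relations give
$$c_1^2(Z)-3\sigma(Z)-2\chi(Z)=\bigl[c_1^2(X_1,J_1)-3\sigma(X_1)-2\chi(X_1)\bigr]-\bigl[c_1^2(X_2,J_2)-3\sigma(X_2)-2\chi(X_2)\bigr]-4\chi(X_2),$$
because $\chi(X_1)-\chi(X_2)=\chi(Z)-2\chi(X_2)$; so the desired equality of the two candidate values of $d_3(\xi)$ is equivalent to the top obstruction equalling $-\chi(X_2)$, a quantity that depends on the filling and not only on $[\xi]$. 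A concrete counterexample to your claim: take $(M,\xi)=(S^3,\xi_{std})$; with $X_1=X_2=B^4$ one gets $Z=S^4$ with defect $0-3(0)-2(2)=-4$, while with $X_1=B^4$ and $X_2$ the blow-up of $B^4$ at an interior point (a complex filling inducing the identical plane field on the boundary, with $\chi=2$, $\sigma=-1$, $c_1^2=-1$) one gets $Z=\cptwo$ with defect $1-3(1)-2(3)=-8$. Same homotopy class of $\xi$, different obstruction. Indeed, were your claim true, comparing a filling $X_2$ with itself would force $-\chi(X_2)$ to depend only on $[\xi]$, i.e.\ all almost complex fillings of a fixed $(M,\xi)$ would have the same Euler characteristic, which blowing up contradicts. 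The genuinely hard step, which your proposal assumes away, is precisely the lemma that the top obstruction of the correctly glued spin$^c$ data equals $-\chi(X_2)$ plus the obstruction-theoretic difference class of the two boundary plane fields (zero when they are homotopic); that computation is the actual content of Gompf's proof, and without it your final cancellation leaves the term $4\chi(X_2)$ unaccounted for.
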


Using Theorem \ref{thm:contact_hypersurface}, we prove a restriction on $d_3(\xi)$ for the branched double cover $(\Sigma_2(\Lambda'),\xi)$: 
\newtheorem*{thm:d3_obstruction}{Theorem \ref{thm:d3_obstruction}}
\begin{thm:d3_obstruction}
	If $\Lambda$ is a Legendrian knot which satisfies $U\prec\Lambda\prec U$, then the contact cyclic branched double cover $(\Sigma_2(\Lambda'),\xi)$ of $S^3$ branched over the transverse push-off $\Lambda'$ of $\Lambda$ has 
	$$d_3(\xi) =-\frac{1}{2}.$$
\end{thm:d3_obstruction}

\begin{proof}
Suppose we have a Legendrian knot $\Lambda$ satisfying $U\prec\Lambda\prec U$. Consider a transverse push-off $\Lambda'$ in $S^3$ of $\Lambda$ and $\Sigma_2(\Lambda')$, the  cyclic double cover of $S^3$ branched over $\Lambda'$. By Theorem \ref{thm:contact_hypersurface}, there is a filling $(X, \omega)$ of $\Sigma_2(\Lambda')$ which embeds in $\RR^4$.

Now we will compute $d_3(\xi)$ using $X$. We begin with $\sigma(X)$, which is the signature of the intersection form $Q_X$. Let $S$ be any surface in $X$ and take the class $[S]\in H_2(X)$. Then since $S$ embeds in $X$ and $X$ embeds in $\RR^4$, $S$ embeds in $\RR^4$ and has self-intersection 0. Thus $Q_X\equiv 0$, and $\sigma(X)=0$.

Next we want to find $c_1^2(X,J)$. We know $c_1(X,J)\in H^2(X,\ZZ)$ and by Poincar\'e duality, $H^2(X)\cong H_2(X,\partial X)$. Thus, let us consider a properly embedded surface $F\subset X$ such that 
$$[F] = PD(c_1(X,J))\in H_2(X,\partial X).$$
Consider the exact sequence
$$0\rightarrow H_2(X) \rightarrow H_2(X,\partial X) \rightarrow H_1(\partial X).$$

Note that for $\Sigma_2 (K)$, the branched double cover of a knot $K$, $|H_1(\Sigma_2 (K))|=\det(K)$ where the determinant of a knot is the Alexander polynomial evaluated at $-1$.
Thus $H_1(\partial X)$ is finite and $[\partial F]\in H_1(\partial X)$, thus 
$$\partial(d[F])=0$$
for some $d>0$, for instance $d=\det(K)$, in $H_1(\partial X)$. So $d[F]=[S]$ for some closed surface $S\subset X$, and 
$$[F]^2=[F]\cdot\frac{1}{d}[S] =\frac{Q_X(F,S)}{d}\in \frac{1}{d}\ZZ.$$

Since $Q_X=0$, we have $c_1(X,J)^2=[F]^2=0$.

Finally, we compute the Euler characteristic of $X$. Note that $X$ is the branched double cover of a properly embedded disk $D$ in $B^4$, $X=\Sigma_2(D)$. Thus if we let $\nu(D)$ denote a neighbourhood of $D$ in $X$, we compute:
$$\chi(X)=2\chi(B^4\setminus \nu(D))+\chi(\nu(D))-\chi(B)$$
where $B$ is a circle bundle over $D$. Thus,
$$\chi(X)=2\cdot0+1-0=1.$$
Thus,
\begin{align*}
	d_3(\xi) &=\frac{c_1^2(X,J)-3\sigma(X)-2\chi(X)}{4}
	=\frac{0-3(0)-2(1)}{4}
	=-\frac{1}{2}.\qedhere
\end{align*}
\end{proof}

\begin{remark}
	Note that Theorem \ref{thm:contact_hypersurface} and Theorem \ref{thm:fillings_embed} also hold in the case that $\Lambda$ is a Legendrian link if the Lagrangian obtained by gluing the Lagrangian cobordisms from $U$ to $\Lambda$ and from $\Lambda$ to $U$ is a cylinder. Thus so does Theorem \ref{thm:d3_obstruction}.
\end{remark}

Theorem 1.2 of \cite{Pla06} states that for $\Lambda'$ a transverse knot and $(\Sigma_2(\Lambda'),\xi)$ the double cover of $S^3$ branched over $\Lambda'$, $d_3(\xi)$ is completely determined by the topological link type of $\Lambda'$ and its self-linking number sl$(\Lambda')$. To this end, Ito \cite{Ito17} provides a formula:

\begin{thm}\cite{Ito17}\label{thm:Ito_d3}
If a contact 3-manifold $(M,\xi)$ is a $p$-fold cyclic contact branched covering of $(S_3,\xi_{std})$ branched along a transverse link $K$, then
$$d_3(\xi)= -\frac{3}{4}\sum_{\omega:\;\omega^p=1}\sigma_\omega(K)-\frac{p-1}{2} sl(K) -\frac{1}{2}p$$
where $\sigma_\omega(K)$ is the Tristam-Levine signature of $K$, given by $(1-\omega)A+(1-\overline{\omega})A^T$ where $A$ is the Seifert matrix of $K$, and $sl(K)$ is the self-linking number of $K$.
\end{thm}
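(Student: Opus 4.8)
The plan is to realize $(M,\xi)$ as the oriented boundary of an explicit almost complex $4$-manifold and then feed that manifold into Gompf's formula $d_3(\xi)=\frac{c_1^2(X,J)-3\sigma(X)-2\chi(X)}{4}$. The natural candidate is $X=\Sigma_p(F)$, the $p$-fold cyclic branched cover of $(B^4,J_{std})$ over a pushed-in Seifert (Bennequin) surface $F$ with $\partial F=K$. After isotoping $F$ so that the branch locus is symplectic, hence pseudoholomorphic (in the same spirit as the approximation in Lemma \ref{lem:approximation}), the branched cover inherits a complex structure $J$ whose complex tangencies along $\partial X=M$ induce exactly the contact branched cover $\xi$. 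Thus it suffices to compute the three characteristic numbers $\sigma(X)$, $\chi(X)$, and $c_1^2(X,J)$ and to simplify.

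For the signature I would invoke the branched-cover signature theorem of Viro and Kauffman--Taylor (equivalently the $G$-signature computation of Casson--Gordon), which identifies the signature of a cyclic branched cover of $B^4$ along a pushed-in surface with a sum of Tristram--Levine signatures:
\[
\sigma(X)=\sum_{\omega:\,\omega^p=1}\sigma_\omega(K),
\]
the $\omega=1$ term vanishing since $(1-1)A+(1-1)A^T=0$. This single input already produces the $-\tfrac34\sum_{\omega}\sigma_\omega(K)$ term verbatim. For the Euler characteristic I would use multiplicativity of $\chi$ under branched covers, $\chi(X)=p\,\chi(B^4)-(p-1)\chi(F)$, expressing $\chi(F)$ through the genus of $F$ (equivalently the size of the Seifert matrix $A$).

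The delicate term is $c_1^2(X,J)$, and this is where I expect the real work to lie. Using the Riemann--Hurwitz relation for the first Chern class of a cyclic branched cover, $c_1(X,J)=\pi^*c_1(B^4,J_{std})-(p-1)\,\mathrm{PD}[\widetilde F]$ where $\widetilde F$ is the branch surface upstairs and $c_1(B^4,J_{std})=0$, the computation reduces to the \emph{relative} self-intersection of $\widetilde F$. The key geometric identification is that this relative self-intersection number, measured against the contact framing rather than the Seifert framing, is governed by the self-linking number $sl(K)$; this is the sense in which $sl$ records the defect between the contact and Seifert framings along $\partial F$. The main obstacle is to carry out this relative Chern-number bookkeeping correctly: one must track the boundary/relative nature of $c_1^2$, fix compatible framing conventions, and reconcile the $(p-1)$ Riemann--Hurwitz factor with the normal-bundle branching factor so that no spurious fractions survive. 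My rough accounting indicates the target value is $c_1^2(X,J)=-2(p-1)\bigl(\chi(F)+sl(K)\bigr)$, and verifying this cleanly against the relative intersection theory is the crux of the argument.

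Finally I would assemble the three pieces. With the value above, the genus-dependent parts of $c_1^2(X,J)$ and of $2\chi(X)$ cancel exactly, leaving the combination in terms of $sl(K)$ and $p$ only, and substitution into Gompf's formula yields
\[
d_3(\xi)=-\frac34\sum_{\omega:\,\omega^p=1}\sigma_\omega(K)-\frac{p-1}{2}sl(K)-\frac{p}{2}.
\]
As a consistency check one can specialize to $p=2$ with $K$ the transverse unknot ($sl=-1$, all $\sigma_\omega=0$), recovering $d_3=-\tfrac12$ in agreement with the direct computation behind Theorem \ref{thm:d3_obstruction}.
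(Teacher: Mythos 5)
The paper does not prove this statement: Theorem \ref{thm:Ito_d3} is imported verbatim from \cite{Ito17} and used as a black box (the text even remarks that the alternative would be Plamenevskaya's surgery algorithm plus the Ding--Geiges--Stipsicz $d_3$ formula). So your argument has to stand on its own, and as written it has a fatal scope problem at the very first step.

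Your construction requires isotoping the pushed-in Seifert/Bennequin surface $F\subset B^4$ to be symplectic, hence $J$-holomorphic for an almost complex structure inducing $\xi_{std}$ on $S^3$, so that the branched cover inherits the almost complex structure needed for Gompf's formula. But the theorem is asserted for an \emph{arbitrary} transverse link $K$, and most transverse links bound no such surface: the relative adjunction (slice--Bennequin) equality forces any embedded $J$-holomorphic surface in $B^4$ with transverse boundary $K$ to satisfy $sl(K)=-\chi(F)$. This rules out every stabilized transverse representative --- for instance the $sl=-3$ unknot, for which the theorem still applies and gives the overtwisted value $d_3=\tfrac12$ --- and every link whose maximal self-linking number falls below $-\chi$ of all its Seifert surfaces, e.g.\ the left-handed trefoil and all negative braid closures. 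Indeed, by \cite{BoiOre01} the transverse links arising as boundaries of such curves are precisely the quasipositive ones, so your argument can prove at best the quasipositive case. Moreover, on that restricted class one has $\chi(F)+sl(K)=0$ automatically, so your ``crux'' identity $c_1^2(X,J)=-2(p-1)\bigl(\chi(F)+sl(K)\bigr)$ just says $c_1^2=0$ wherever the construction exists; the delicate relative-framing bookkeeping you flag never has nonzero content, and the claimed cancellation against $2\chi(X)$ is an artifact of this collapse rather than a mechanism that could carry the general case. (Within that quasipositive locus your assembly is correct: Viro/Kauffman--Taylor for the signature, Riemann--Hurwitz for $\chi$, $c_1^2=0$, and Gompf's formula do reproduce Ito's expression, consistent with Theorem \ref{thm:d3_obstruction}.) To get the full statement one must handle braid words containing negative letters, which is why the known proofs work with a presentation of the contact branched cover itself --- a lifted open book or a contact $(\pm1)$-surgery diagram in the style of \cite{Pla06} and \cite{DinGei04}, where negative generators contribute $(+1)$-surgeries/negative Dehn twists --- rather than with a branched cover of $B^4$ along a $J$-holomorphic curve, which simply does not exist in that generality.
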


We apply this formula to 3-braids:

\begin{thm} \label{cor:alg_length}
	If $\Lambda$ is a Legendrian knot which satisfies $U\prec\Lambda\prec U$, and $\Lambda'$, the transverse pushoff of $\Lambda$, is the closure of a 3-braid $\beta$, then the algebraic length of $\beta$ is 2.
\end{thm}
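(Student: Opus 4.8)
The plan is to read off the exponent sum directly from the structural results already in place. Since the algebraic length (exponent sum) of a braid is the image under the abelianization homomorphism $B_3 \to \ZZ$ sending each generator to $1$, it is invariant under conjugation and under the braid relations; hence it suffices to evaluate it on the conjugacy normal form supplied by Proposition \ref{thm:nai_restriction}. That proposition writes $\beta$, up to conjugation, as
$$\beta = (\sigma_1\sigma_2)^{3d}\sigma_1\sigma_2^{-a_1}\cdots\sigma_1\sigma_2^{-a_n}, \qquad \sum_{i=1}^n a_i = n+4,$$
and Corollary \ref{cor:d_3_retricts_d} pins down $d=1$. Reading off exponents, the full twists $(\sigma_1\sigma_2)^{3d}$ contribute $6d$, while each block $\sigma_1\sigma_2^{-a_i}$ contributes $1-a_i$, so that
$$\text{len}(\beta) = 6d + n - \sum_{i=1}^n a_i = 6(1) + n - (n+4) = 2.$$

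I would complement this with the more invariant observation that the conclusion is already forced by the self-linking number alone, which has the advantage of also covering the degenerate case $\Lambda=U$ that falls outside the hypotheses of Proposition \ref{thm:nai_restriction}. Indeed, under a genus-zero Lagrangian concordance both $tb$ and $rot$ are preserved, so $tb(\Lambda)=tb(U)=-1$ and $rot(\Lambda)=rot(U)=0$; equivalently, in the filling produced by Theorem \ref{thm:contact_hypersurface} the transverse knot $\Lambda'$ bounds a genus-zero symplectic surface. Either way the positive transverse push-off satisfies $sl(\Lambda')=-1$. Combining this with the braid formula $sl(\Lambda')=\text{len}(\beta)-3$ already used in the proof of Proposition \ref{thm:nai_restriction}, one obtains $\text{len}(\beta)=sl(\Lambda')+3=2$ at once, in agreement with the normal-form computation above.

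There is no genuine obstacle in this statement: all of the substantive input has been carried out in the $d_3$ and signature computations feeding Proposition \ref{thm:nai_restriction} and Corollary \ref{cor:d_3_retricts_d}. The only two points deserving a line of care are the conjugation-invariance of the exponent sum, which is what licenses computing it on the normal form rather than on the original braid word, and the identification $sl(\Lambda')=-1$, which follows from the concordance-invariance of the classical Legendrian invariants $tb$ and $rot$.
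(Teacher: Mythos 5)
Your proposal is correct, and it is worth separating its two halves. The first computation --- evaluating $\mathrm{len}(\beta)=6d+n-\sum_i a_i$ on the normal form of Proposition \ref{thm:nai_restriction} with $d=1$ from Corollary \ref{cor:d_3_retricts_d} --- is exactly the paper's argument in its main branch, where $\beta$ is of Murasugi type 1. The paper must then dispose of the complementary branch separately: if $\beta$ is not of type 1, Lemmas \ref{lem:not2} and \ref{lem:not3} force $\Lambda=U$ and $\beta$ conjugate to $(\sigma_1\sigma_2)^3\sigma_1^{-3}\sigma_2^{-1}$, whose length is $6-4=2$. Your second argument replaces that case analysis (and in fact the entire theorem) by something genuinely different and more elementary: Chantraine's invariance of $tb$ and $rot$ under concordance gives $sl(\Lambda')=tb(\Lambda)-rot(\Lambda)=-1$, and Bennequin's formula $sl(\Lambda')=\mathrm{len}(\beta)-3$ for transverse braid closures --- the same formula the paper itself invokes inside the proof of Proposition \ref{thm:nai_restriction} --- gives $\mathrm{len}(\beta)=2$ at once. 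This route needs neither Murasugi's classification nor the $d_3$ computation, uses only one of the two concordances, and uniformly covers $\Lambda=U$, which falls outside the hypotheses of Proposition \ref{thm:nai_restriction}; what the paper's longer route buys instead is the finer normal-form information ($d=1$, $\sum_i a_i=n+4$) recorded along the way. One caveat applies equally to both proofs: the Bennequin formula requires reading the hypothesis as saying that $\Lambda'$ is \emph{transversely} the closure of $\beta$ (as Remark \ref{remark:quasipositive_transverse} arranges); with only a smooth identification the statement would fail, since for instance the unknot is smoothly the closure of $\sigma_1\sigma_2^{-1}$, which has algebraic length $0$.
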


\begin{proof}
Let $\Lambda$ be a Legendrian knot which satisfies $U\prec\Lambda\prec U$, and $\Lambda'$ be a transverse pushoff of $\Lambda$. Suppose $\Lambda'$ is the closure of a 3-braid. Let $(\Sigma_2(\Lambda'), \xi)$ be a cyclic double cover of $S^3$ branched over $\Lambda'$. Then from \ref{thm:Ito_d3} and \ref{thm:d3_obstruction}, we get that 
$$-\frac{1}{2}=d_3(\xi)= -\frac{3}{4}\sum_{\omega:\omega^2=1}\sigma_\omega(\Lambda')-\frac{2-1}{2} sl(\Lambda') -\frac{1}{2}2.$$	
Since $\Lambda$ is slice, its signature must vanish.
$$-\frac{1}{2}= 0-\frac{1}{2} sl(\Lambda') -1.$$	
The self-linking number of $\Lambda'$ can be computed from this algebraic length and is given by:
$$sl(\Lambda') = \text{len}(\beta)-(\text{number of strands of }\beta)=\text{len}(\beta)-3.$$
Thus $\text{len}(\beta)= -1 +3 = 2$.
\end{proof}

Using the same strategy, we can obtain a more general result for $n$-stranded braids:

\begin{cor} \label{cor:n_braid}
	If $\Lambda$ is a Legendrian knot which satisfies $U\prec\Lambda\prec U$, and $\Lambda'$ is the closure of n $n$-braid $\beta$, then $\Lambda'$ has self linking number $-1$ and the algebraic length of $\beta$ is $n-1$.
\end{cor}

\begin{proof}
	Consider the cyclic double cover of $S^3$ branched over $\Lambda'$, $(\Sigma_2(\Lambda'),\xi)$. We apply the result of Theorem \ref{thm:d3_obstruction} and the fact that the signature must vanish to Ito's formula.
	\begin{align*}
		d_3(\xi)&= -\frac{3}{4}\sum_{\omega:\;\omega^2=1}\sigma_\omega(K)-\frac{2-1}{2} sl(K) -\frac{1}{2}2\\
		-\frac{1}{2}&= 0-\frac{1}{2} sl(\Lambda') -1\\
		-1&= sl(\Lambda')= len(\beta)-n
		 \qedhere
	\end{align*}
\end{proof}

\section{Open book decompositions and Weinstein Lefschetz fibrations}
\label{OBDtoWD}

In this section, we find a Weinstein domain $X_\Lambda$ whose boundary is $\Sigma_2(\Lambda')$ where $\Lambda'$ is the closure of a 3-braid with algebraic length 2. For background on Weinstein manifolds and their handlebody diagrams, see \cite{Wei91, CieEli12} or the summary in \cite[Section 2]{AcuCap21}.

To find this Weinstein domain, we begin by expressing $\Sigma_2(\Lambda')$ as a particular \emph{open book decomposition}, a  pair $(F,\phi)$ where $F$ is a surface with non-empty boundary and the \emph{monodromy} $\phi$ is a diffeomorphism of $F$ with $\phi|_{\partial F} = \text{id}$. $\phi$ is given by compositions of Dehn twists about some curves in $F$. Let $\alpha_1$ and $\alpha_2$ be some such curves. We denote the composition of Dehn twists about these curves as:
$$\tau_{\alpha_1\alpha_2} = \tau_{\alpha_1}\circ\tau_{\alpha_2}=\tau_{\alpha_1}\tau_{\alpha_2}$$ 
If $\alpha$ is a curve and $f$ is an orientation preserving surface homeomorphism then $\tau_{f(\alpha)} =f\circ \tau_\alpha\circ f^{-1}$.

From this particular open book decomposition, we construct a \emph{Weinstein Lefschetz fibration}:

\begin{dfn} 
	Let $X$ be a compact, oriented symplectic 4-manifold,  let $\Sigma$ be a compact oriented manifold with dimension $2$. \emph{A Lefschetz fibration} $\pi:X\to \Sigma$ is a smooth surjective map which is a locally trivial fibration except at finitely many isolated, nondegenerate critical points with distinct values on the interior of $\Sigma$. In local coordinates near a critical point, the fibration is modelled by $\pi (z_1, z_2)=z_1^2+z_2^2$. The Lefschetz fibration is \emph{symplectic} if the fibers are symplectic submanifolds. 
	
	Let $x_0 \in \Sigma$ be a critical value. Let $x\in \Sigma$ be a generic value. Take a path $x\to x_0\subset \Sigma$. In a fiber $\pi^{-1}(x)$ of a generic value $x$, there is a closed curve $C$ called a \emph{vanishing cycle} which collapses after parallel transport along this path. $\pi^{-1}(x_0)$ can be identified with $\pi^{-1}(x)$ by collapsing $C$ to singular ordinary double point.
	
	A Lefschetz fibration $\pi:X\to \Sigma$ where $X$ is a Weinstein domain is a \emph{Weinstein Lefschetz fibration} if its generic fiber $F$ is also a Weinstein domain and if $X$ is obtained by attaching critical Weinstein handles along attaching Legendrians $\Lambda_i\subset F\times S^1\subset \partial(F\times D^2)$ obtained by lifting the vanishing cycles $C_i\subset F$. 
\end{dfn}

\begin{remark}\label{remark:Seidel}
	Suppose we have an open book decomposition $\pi:M\to S^1$ of a contact 3-manifold $(M,\xi)$ with monodromy given by positive Dehn twists $\tau_{\alpha_1},\dots,\tau_{\alpha_k}$ about some curves $\alpha_1,\dots,\alpha_k$ in $F$. Then we can build a Lefschetz fibration $\pi:X\to D^2$ of the Stein filling $X$ of $M$ where total monodromy, given as the composition of the $\tau_i$ Dehn twists about vanishing cycles $C_i$ collapsing in the critical surfaces $\pi^{-1}(x_i)$, must agree with the monodromy of the open book. As long as the fiber $F$ has a Weinstein structure, $X$ also has a Weinstein structure corresponding to the attachment of critical Weinstein handles along the corresponding vanishing cycles. 
\end{remark} 

To begin our construction, we prove the following lemma:

\begin{lemma}\label{lem:sigma1braid}
Any quasipositive 3-braid of algebraic length 2 is conjugate to $\sigma_1 B \sigma_1 B^{-1}$ for some braid $B$. 
\end{lemma}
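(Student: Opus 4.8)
The plan is to combine two elementary facts about the $3$-strand braid group: that the algebraic length records the number of band factors in any quasipositive expression, and that the two generators $\sigma_1$ and $\sigma_2$ are conjugate in $B_3$. With these in hand the lemma becomes a short bookkeeping argument ending in a single conjugation.

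First I would recall that a quasipositive braid is by definition a product $\beta = \prod_{k=1}^{N} w_k \sigma_{i_k} w_k^{-1}$ of conjugates of \emph{positive} generators. Since conjugation preserves the exponent sum and each factor $w_k\sigma_{i_k}w_k^{-1}$ has exponent sum $+1$, the algebraic length of $\beta$ equals $N$. Because the exponent sum is a well-defined homomorphism $B_3\to\ZZ$, this count is independent of the chosen word, so a quasipositive $3$-braid of algebraic length $2$ is a product of \emph{exactly} two band factors:
$$\beta = (w_1 \sigma_{i_1} w_1^{-1})(w_2 \sigma_{i_2} w_2^{-1}), \qquad i_1,i_2\in\{1,2\}.$$

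Next I would use that in $B_3$ the two generators are conjugate. Setting $\Delta=\sigma_1\sigma_2\sigma_1$, the braid relation $\sigma_1\sigma_2\sigma_1=\sigma_2\sigma_1\sigma_2$ gives $\Delta\sigma_1\Delta^{-1}=\sigma_2$. Hence any conjugate of $\sigma_2$ is also a conjugate of $\sigma_1$, since $w\sigma_2 w^{-1}=(w\Delta)\sigma_1(w\Delta)^{-1}$. Applying this to whichever of the two factors above is a band of $\sigma_2$, I may rewrite both factors as conjugates of $\sigma_1$, obtaining $\beta=(v_1\sigma_1 v_1^{-1})(v_2\sigma_1 v_2^{-1})$ for suitable $v_1,v_2\in B_3$. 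Conjugating by $v_1^{-1}$ then yields
$$v_1^{-1}\,\beta\, v_1 = \sigma_1\,(v_1^{-1}v_2)\,\sigma_1\,(v_1^{-1}v_2)^{-1} = \sigma_1 B\sigma_1 B^{-1},$$
with $B=v_1^{-1}v_2$, which is exactly the claimed normal form.

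I do not expect a genuine obstacle here: the entire content lies in the two observations, namely that algebraic length pins the number of bands to be two and that conjugacy of $\sigma_1$ with $\sigma_2$ lets me normalize both bands to $\sigma_1$. The only point deserving a sentence of care is the justification that the quasipositive expression has exactly two factors rather than more, which is precisely the invariance of the exponent sum noted above; everything after that is a single conjugation.
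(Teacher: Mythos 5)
Your proof is correct and follows essentially the same route as the paper: write the braid as a product of exactly two bands, use the conjugacy of $\sigma_1$ and $\sigma_2$ (the paper uses $\sigma_2=\sigma_1^{-1}\sigma_2^{-1}\sigma_1\sigma_2\sigma_1$, you use $\Delta\sigma_1\Delta^{-1}=\sigma_2$, which is the same identity) to turn any $\sigma_2$-band into a $\sigma_1$-band, and then conjugate by the inverse of the first conjugating word. The only difference is that you explicitly justify, via the exponent-sum homomorphism, that the quasipositive expression has exactly two factors, a point the paper leaves implicit.
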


\begin{proof}
Suppose we have some quasipositive 3-braid of algebraic length 2, call it $A$. Then $A$ is the product of two conjugates of positive generators of the braid group, 
$$A = B_i \sigma_iB_i^{-1}B_j\sigma_jB_j^{-1}$$
for $i,j\in\{1,2\}$. For $k\in \{i,j\}$, if $k=1$, let $B_k = B_k'$, and if $k=2$, since $\sigma_2 = \sigma_1^{-1}\sigma_2^{-1}\sigma_1\sigma_2\sigma_1$, let $B_k' = B_k\sigma_1^{-1}\sigma_2^{-1}$.  Then
$$A = B_1'\sigma_1B_1'^{-1}B_2'\sigma_1B_2'^{-1}$$
Now we can conjugate by $B_1'^{-1}$:
\begin{align*}
	A	&= B_1'^{-1}(B_1'\sigma_1B_1'^{-1}B_2'\sigma_1B_2'^{-1})B_1'\\
			&=\sigma_1B_1'^{-1}B_2'\sigma_1B_2'^{-1}B_1'
\end{align*} 
So we take $B=B_1'^{-1}B_2'$.
\end{proof}

\begin{prop}\label{prop:OBD}
Let $\Lambda'$ be a transverse knot which is smoothly the closure of a quasipositive 3-braid of algebraic length 2. The double cover of $S^3$ branched over $\Lambda'$, $(\Sigma_2(\Lambda'),\xi)$ has an open book decomposition $(F,\phi)$ where $F$ is a torus with one boundary component, and $\phi=\tau_\alpha\tau_\gamma$. Here, $\alpha$ is the curve of slope $(1,0)$ and $\gamma$ is some essential simple closed curve in $F$, see Figure \ref{fig:curvesontorus}.
\end{prop}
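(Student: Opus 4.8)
The plan is to compute the branched double cover $\Sigma_2(\widehat{\beta})$ for $\beta = \sigma_1 B \sigma_1 B^{-1}$ (the normal form provided by Lemma \ref{lem:sigma1braid}) by exploiting the well-known correspondence between braid closures and open books. I would start from the observation that the double cover of $S^3$ branched over a transverse link presented as a closed $n$-braid admits an open book decomposition whose page $F$ is the double cover of the disk $D^2$ branched over the $n$ braid points, and whose monodromy is read off from the braid word. For a $3$-braid, the page is the double cover of the disk branched over $3$ points: by the Riemann–Hurwitz formula this surface has Euler characteristic $\chi(F) = 2\chi(D^2) - 3 = 2 - 3 = -1$ with a single boundary component, which forces $F$ to be a once-punctured torus. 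This identifies the page as claimed.

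The key step is to translate the braid word into Dehn twists on $F$. Under the branched-cover correspondence, each positive Artin generator $\sigma_i$ lifts to a single right-handed Dehn twist $\tau_{c_i}$ about a simple closed curve $c_i$ in $F$, where $c_1$ and $c_2$ are the two curves that project to arcs joining consecutive branch points; these two curves intersect once and generate $H_1(F)$. A conjugated generator $B \sigma_i B^{-1}$ lifts to $\widetilde{B}\,\tau_{c_i}\,\widetilde{B}^{-1} = \tau_{\widetilde{B}(c_i)}$ using the conjugation identity $\tau_{f(\alpha)} = f\circ\tau_\alpha\circ f^{-1}$ recorded in the excerpt. Applying this to $\beta = \sigma_1\,(B\sigma_1 B^{-1})$, the monodromy becomes
$$
\phi = \tau_{c_1}\,\tau_{\widetilde{B}(c_1)}.
$$
Setting $\alpha = c_1$ (the curve of slope $(1,0)$) and $\gamma = \widetilde{B}(c_1)$, we obtain $\phi = \tau_\alpha \tau_\gamma$ with $\gamma$ an essential simple closed curve — essential because it is the image of an essential curve under a diffeomorphism. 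This is precisely the stated form.

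The main obstacle I expect is pinning down the lift carefully: one must verify that the two branch arcs lift to \emph{simple closed} curves (not arcs) with the correct geometric intersection and orientation, and that positive generators lift to \emph{right-handed} (positive) Dehn twists so that the resulting open book supports the correct contact structure. This requires setting up an explicit model of the double branched cover of the disk — for instance realizing $F$ as the plumbing of two bands or as the Milnor fiber of $z^3 + w^2$ — and tracking the half-monodromy associated to a braid generator as the deck-transformation-equivariant lift of the elementary braid move. I would justify the positivity via the classical fact (Plamenevskaya, already cited for the $d_3$ computation) that a quasipositive braid gives a tight, Stein-fillable contact structure on the branched cover, making the open book one with positive monodromy, consistent with Remark \ref{remark:Seidel}. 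Once the lift of a single $\sigma_i$ is identified as a single positive Dehn twist, the conjugation identity does the rest, and the slope bookkeeping identifying $\alpha$ with $(1,0)$ is a routine homological computation on the once-punctured torus.
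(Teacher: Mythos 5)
Your proposal is correct and follows essentially the same route as the paper: both invoke the Birman--Hilden/Plamenevskaya correspondence to identify the page as the once-punctured torus with each braid generator lifting to a (positive) Dehn twist, apply Lemma \ref{lem:sigma1braid} to put the braid in the form $\sigma_1 B \sigma_1 B^{-1}$, and use the identity $\tau_{f(\alpha)} = f\circ\tau_\alpha\circ f^{-1}$ to convert the lift of $B\sigma_1 B^{-1}$ into a single twist $\tau_\gamma$ about $\gamma$, the image of $\alpha$ under the lifted mapping class of $B$. The only cosmetic difference is that you identify the page via Riemann--Hurwitz while the paper reads it off from the explicit picture (Figure \ref{fig:from_planar}); the logical skeleton is identical.
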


\begin{figure}
	\begin{center}
		\begin{tikzpicture}
			\node[inner sep=0] at (0,0) {\includegraphics[width=12cm]{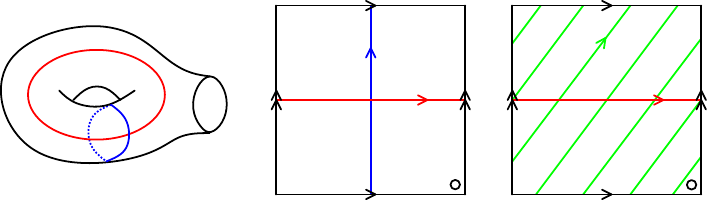}};
			\node at (-0.8,0.3){$\alpha$};
			\node at (0,1){$\beta$};
		\end{tikzpicture}
		\caption[Curves on the torus.]{The curves $\alpha$ (red), $\beta$ (blue), and an example of a curve $\gamma$ (green) on the torus with one boundary component. Here $\gamma$ is the curve of slope $(3,4)$.}
		\label{fig:curvesontorus}
	\end{center}
\end{figure}

\begin{proof}
To prove this, we will use the construction from Plamenevskaya \cite{Pla06}. When a contact manifold $(M,\xi)$ is the branched double cover of a transverse link $L$ which is the closure of a $(2k+1)$-stranded braid given by a braid word on $2k$ generators and their inverses, $\sigma_1, \sigma_1^{-1}, \dots, \sigma_{2k}, \sigma_{2k}^{-1}$, we can think of $L$ as a transverse link in the standard contact structure $\xi_{std}$ on $S^3$. $S^3$ has a planar open book decomposition with trivial monodromy and we arrange it so that the pages are transverse to the braid $L$. Then we may lift the contact structure on the double cover $\Sigma_2(L)$ of $S^3$ branched over $L$. The contact structure is compatible with the open book decomposition $(F_{k,1},\phi)$ where $F_{k,1}$ is the genus $k$, 1 boundary component surface obtained by taking each planar page branched over the three points where they intersect $L$ (see Figure \ref{fig:from_planar}), and $\phi$ is given by Dehn twists corresponding to the braid word which are the lifts of the half twists of $L$ in $S^3$. 

Thus the monodromy of the open book comes from the braid monodromy: each generator $\sigma_i$ in the braid corresponds to a Dehn twist $\tau_i$ along a curve in $F_{k,1}$. 

\begin{figure}
	\begin{center}
		\includegraphics[width=6cm]{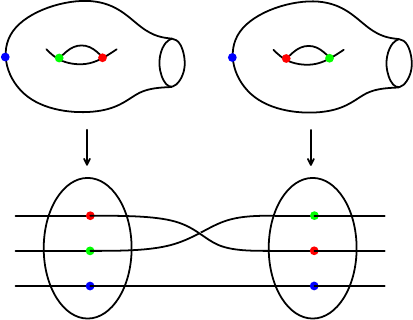}
		\caption[The double cover of a planar page branched over 3 points]{A torus with one boundary component is the double cover of a planar page branched over 3 points. The effect of a half twist in the braid lifts to a Dehn twist in the torus.}
		\label{fig:from_planar}
	\end{center}
\end{figure}

Let $\Lambda'$ be a transverse knot which is the closure of a quasipositive 3-braid of algebraic length 2. The double cover of $S^3$ branched over $\Lambda'$, $(\Sigma_2(\Lambda'),\xi)$ has an open book decomposition $(F,\phi)$.

Since $\Lambda'$ is a 3-braid, $F \defeq F_{1,1}$ is a torus with one boundary component with $\sigma_1$ corresponding to a Dehn twist about $\alpha$ and $\sigma_2$ corresponding to a Dehn twist about $\beta$ as seen in Figure \ref{fig:curvesontorus}.
$\phi$ is given by Dehn twisting along these curves corresponding to the braid monodromy of $\Lambda'$. By Lemma \ref{lem:sigma1braid}, $\Lambda'$ is the closure of of a braid of the form $\sigma_1 B \sigma_1 B^{-1}$ for some braid $B$ up to conjugation. The first $\sigma_1$ corresponds to a Dehn twist about $\alpha$, $\tau_\alpha$. The braid $B \sigma_1 B^{-1}$ corresponds to a series of Dehn twists 
$$\mu_1\circ\dots\circ\mu_m\circ\tau_\alpha\circ\mu_m^{-1}\circ\dots\circ\mu_1^{-1}$$
where the $\mu_i\in\{\tau_{\alpha}^{\pm1},\tau_{\beta}^{\pm1}\}$  correspond to the braid generators in $B$. Then since 
\begin{align*}
 &(\mu_1\circ\dots\circ\mu_m)\circ\tau_\alpha\circ(\mu_m^{-1}\circ\dots\circ\mu_1^{-1})\\
 =&(\mu_1\dots\mu_m)\circ\tau_\alpha\circ(\mu_1\dots\mu_m)^{-1}\\
 =&\tau_{\mu_1\dots\mu_m(\alpha)},
\end{align*}
we choose $\gamma \defeq \mu_1\dots\mu_m(\alpha)$. 

Thus the monodromy $\phi$ of the open book decomposition is given by Dehn twists along $\alpha$, the curve of slope $(1,0)$, and some essential simple closed curve $\gamma$.
\end{proof}

\begin{cor}\label{cor:WeinsteinLF}
Let $\Lambda'$ be a transverse knot which is smoothly the closure of a quasipositive 3-braid of algebraic length 2. The double cover of $S^3$ branched over $\Lambda'$, $(\Sigma_2(\Lambda'),\xi)$ has Weinstein filling $X_\Lambda$ with the following property. $X_\Lambda$ has a Weinstein Lefschetz fibration $\pi:X_\Lambda\rightarrow D^2$ with generic fiber $F$, a torus with one boundary component and monodromy given by vanishing cycles which are Dehn twists along the curves $\alpha$ and $\gamma$ where $\alpha$ is the $(1,0)$ curve and $\gamma$ is some essential simple closed curve in $F$, see Figure \ref{fig:curvesontorus}.
\end{cor}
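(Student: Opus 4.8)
The plan is to read this corollary off directly from Proposition \ref{prop:OBD} together with the Lefschetz-fibration construction recorded in Remark \ref{remark:Seidel}. Proposition \ref{prop:OBD} already supplies an open book decomposition $(F,\phi)$ of $(\Sigma_2(\Lambda'),\xi)$ whose page $F$ is a once-punctured torus and whose monodromy is $\phi=\tau_\alpha\tau_\gamma$, where $\alpha$ is the $(1,0)$ curve and $\gamma$ is some essential simple closed curve. So the only work remaining is to promote this open book into a Weinstein Lefschetz fibration of a filling.

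First I would verify that both Dehn twists appearing in $\phi$ are \emph{positive}, since this is precisely the hypothesis needed to invoke Remark \ref{remark:Seidel}. This is immediate from the construction in Proposition \ref{prop:OBD}: by Lemma \ref{lem:sigma1braid} the braid is conjugate to $\sigma_1 B\sigma_1 B^{-1}$, and each of the two positive half-twists (the two copies of $\sigma_1$) lifts to a positive Dehn twist on the double-branched-cover page. The twist $\tau_\alpha$ comes from the first $\sigma_1$, while the conjugated twist $\tau_\gamma=\tau_{\mu_1\cdots\mu_m(\alpha)}$ comes from $B\sigma_1 B^{-1}$; since a conjugate of a positive Dehn twist is again a positive Dehn twist via $\tau_{f(\alpha)}=f\circ\tau_\alpha\circ f^{-1}$, the twist $\tau_\gamma$ is positive as well.

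Next I would observe that the page $F$, being a compact surface with nonempty boundary, carries a Weinstein structure, as every such surface is a $2$-dimensional Weinstein domain. With a positive open book $(F,\tau_\alpha\tau_\gamma)$ and a Weinstein page in hand, Remark \ref{remark:Seidel} applies verbatim: one builds a Weinstein Lefschetz fibration $\pi:X_\Lambda\to D^2$ with generic fiber $F$ by attaching one critical Weinstein $2$-handle for each positive Dehn twist, along the Legendrian lifts of the vanishing cycles $\alpha$ and $\gamma$. The total monodromy of this fibration, being the product of the two Dehn twists about these vanishing cycles, agrees with $\phi=\tau_\alpha\tau_\gamma$, so the contact boundary of $X_\Lambda$ is exactly $(\Sigma_2(\Lambda'),\xi)$ and $X_\Lambda$ is the desired Weinstein filling.

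I do not expect a genuine obstacle here, as the corollary is essentially a repackaging of Proposition \ref{prop:OBD} through the open-book/Lefschetz-fibration dictionary. The one point demanding care is the positivity of $\tau_\gamma$: if $B$ is written using both $\sigma_1$ and $\sigma_2$ (hence both $\tau_\alpha^{\pm1}$ and $\tau_\beta^{\pm1}$), the intermediate twists $\mu_i$ need not individually be positive, but the conjugation identity guarantees that the net effect $\tau_{\mu_1\cdots\mu_m(\alpha)}$ is a single positive Dehn twist. Once positivity is confirmed, the remark delivers both the Weinstein structure on $X_\Lambda$ and the identification of its vanishing cycles with $\alpha$ and $\gamma$.
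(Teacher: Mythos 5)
Your proof is correct and takes essentially the same route as the paper: invoke Proposition \ref{prop:OBD} for the open book $(F,\tau_\alpha\tau_\gamma)$ and then apply Remark \ref{remark:Seidel} to build the Weinstein Lefschetz fibration of the filling. The extra checks you supply---positivity of $\tau_\gamma$ via the conjugation identity $\tau_{f(\alpha)}=f\circ\tau_\alpha\circ f^{-1}$, and the Weinstein structure on the page---are details the paper leaves implicit (the first is already established inside the proof of Proposition \ref{prop:OBD}), so they strengthen rather than change the argument.
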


\begin{proof}
Let $\Lambda'$ be a transverse knot which is smoothly the closure of a quasipositive 3-braid of algebraic length 2. By Proposition \ref{prop:OBD}, the double cover of $S^3$ branched over $\Lambda'$, $(\Sigma_2(\Lambda'),\xi)$ has an open book decomposition $(F,\phi)$ where $F$ is a torus with one boundary component, and $\phi$ is given by Dehn twisting along the curves $\alpha$ and $\gamma$ where $\alpha$ is the $(1,0)$ curve and $\gamma$ is some essential simple closed curve in $F$, see Figure \ref{fig:curvesontorus}. Then we may construct a Weinstein Lefschetz fibration by attaching handles along the lifts of vanishing cycles corresponding to $\alpha$ and $\gamma$ in a generic fiber $F$, as described in Remark \ref{remark:Seidel}.
\end{proof}

\begin{remark}\label{remark:TorusDtwistcombinatorics}
Repeated Dehn twists $\tau_\alpha$, $\tau_\beta$, $\tau_{\alpha}^{-1}$ and $\tau_{\beta}^{-1}$ about the curve $\alpha$ of slope $(1,0)$ and $\beta$ of slope $(0,1)$ in Figure \ref{fig:curvesontorus} transforms the curve $\alpha$ to a curve with some slope $(p,q)$ in the torus with one boundary component $F_{1,1}$. Indeed, the mapping class group of the torus with one boundary component is generated by the positive Dehn twists $\tau_\alpha$ and $\tau_\beta$ with presentation
$$\langle \tau_\alpha,\tau_\beta \mid \tau_\alpha\tau_\beta\tau_\alpha=\tau_\beta\tau_\alpha\tau_\beta\rangle,$$
see \cite{FarMar11} for details. Any diffeomorphism of $F_{1,1}$ generated by $\tau_\alpha$ and $\tau_\beta$ is equivalent to a Dehn twist along some non-separating curve with rational slope 
$$\frac{p}{q}\in\QQ\mathbb{P}^1 = \QQ\cup\{\infty\}.$$ 

\begin{table}[ht]
	\centering
	\caption{Effect of a Dehn twist about $\alpha$ or $\beta$ on $(p,q)$ in the torus with one boundary component. \label{table:pq}}
	\begin{tabular}[t]{lcc}
		\toprule
		Braid Generator & Dehn twist & Effect on $(p,q)$\\
		\midrule
		$\sigma_1$		&$\tau_\alpha$		& $(p,q)\mapsto (p+|q|,q)$\\
		$\sigma_1^{-1}$	&$\tau_\alpha^{-1}$	& $(p,q)\mapsto (p-|q|,q)$\\
		$\sigma_2$		&$\tau_\beta$		& $(p,q)\mapsto (p,q-|p|)$\\
		$\sigma_2^{-1}$	&$\tau_\beta^{-1}$	& $(p,q)\mapsto (p,q+|p|)$\\
		\bottomrule
	\end{tabular}
\end{table}

Applying an additional Dehn twist about $\alpha$ or $\beta$ has the result on $(p,q)$ as explained in Table \ref{table:pq}. Thus, given any $(p,q)$ with $p$ and $q$ relatively prime, a $(p,q)$ curve can be obtained by applying $\tau_{\alpha}^{\pm 1}$ and $\tau_{\beta}^{\pm 1}$ to $(1,0)$ according to the Euclidean algorithm. For instance if $q>p >0$, the Euclidean algorithm gives:
\begin{align*}
	q&=n_kp+r_k\\
	p&=n_{k-1}r_k+r_{k-1}\\
	r_k&=n_{k-2}r_{k-1}+r_{k-2}\\
	&\dots\\
	r_3&=n_3r_2+r_1\\
	r_2&=n_2r_1+1\\
	r_1&=n_1.
\end{align*}
So if $\gamma$ is the curve of slope $(p,q)$ in the torus with one boundary component, 
$$\gamma =\tau_{\beta}^{-n_k}\circ\tau_\alpha^{n_{k-1}}\circ\tau_{\beta}^{-n_{k-2}}\circ\dots\circ\tau_{\beta}^{-n_1}(\alpha).$$
\end{remark}

\begin{lemma}\label{lemma:qgreaterthanp}
Let $\Lambda'$ be a transverse knot which is not an unknot. Suppose $\Lambda'$ is smoothly the closure of a quasipositive 3-braid of algebraic length 2, $B$. Then we can always find a braid $B'$ equivalent to $B$ up to conjugation such that the procedure in Proposition \ref{prop:OBD} yields an open book decomposition with monodromy given by Dehn twists along the curves $\alpha$ and $\gamma$ of slopes $(1,0)$ and $(p,q)$ respectively, where $0<p<q$.
\end{lemma}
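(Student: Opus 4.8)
The plan is to start from the normal form supplied by Lemma \ref{lem:sigma1braid} and Proposition \ref{prop:OBD}: after conjugation, $\Lambda'$ is the closure of $\beta=\sigma_1 B\sigma_1 B^{-1}$, and the monodromy of the open book of $\Sigma_2(\Lambda')$ is $\tau_\alpha\tau_\gamma$ with $\gamma=B(\alpha)$. Since $\gamma$ is the image of the primitive curve $\alpha$ under a mapping class (an element of $SL(2,\ZZ)$ on homology, which preserves primitive vectors), its slope $(p,q)$ satisfies $\gcd(p,q)=1$; as slopes are unoriented I may normalize to $q\ge 0$. The key mechanism is that I can modify $\gamma$ by Dehn twists about $\alpha$ \emph{while keeping $\alpha$ fixed as the $(1,0)$ curve}, purely by conjugating the braid, and then apply the Euclidean reduction of Remark \ref{remark:TorusDtwistcombinatorics}.

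Concretely, I would first verify that conjugating $\sigma_1 B\sigma_1 B^{-1}$ by $\sigma_1^{\mp 1}$ produces a braid of the same shape $\sigma_1\widetilde B\sigma_1\widetilde B^{-1}$ with $\widetilde B=\sigma_1^{\pm1}B$, so that the new curve is $\widetilde B(\alpha)=\tau_\alpha^{\pm 1}(\gamma)$ while $\alpha$ is still read off the leading $\sigma_1$. By Table \ref{table:pq} this sends the slope $(p,q)\mapsto(p\pm q,\,q)$, leaving $q$ untouched. Iterating, I reduce $p$ modulo $q$ to arrange $0\le p<q$. When $q\ge 2$, primitivity $\gcd(p,q)=1$ forbids $p=0$, giving $0<p<q$, which is exactly the claim. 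It therefore remains only to exclude the degenerate slopes $q=0$ and $q=1$.

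This is where the hypothesis that $\Lambda'$ is not an unknot enters, through the geometric intersection number $i(\alpha,\gamma)=|q|$ together with the isomorphism $B_3\cong \mathrm{MCG}(F)$, $\sigma_1\mapsto\tau_\alpha,\ \sigma_2\mapsto\tau_\beta$ (both groups carry the presentation recorded in Remark \ref{remark:TorusDtwistcombinatorics}), under which the branched-cover monodromy of $\beta$ is literally its image and conjugacy classes correspond. If $q=0$ then $\gamma=\alpha$ and the monodromy is $\tau_\alpha^2$, the image of $\sigma_1^2$, so $\Lambda'$ is the closure of $\sigma_1^2$, a three-component link rather than a knot --- impossible. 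If $|q|=1$ then $\gamma$ meets $\alpha$ once, so $\gamma=\tau_\alpha^k(\beta)$ for some $k$, and conjugating by $\tau_\alpha^{-k}$ (which fixes $\alpha$) shows $\tau_\alpha\tau_\gamma$ is conjugate to $\tau_\alpha\tau_\beta$, the image of $\sigma_1\sigma_2$; hence $\beta$ is conjugate in $B_3$ to $\sigma_1\sigma_2$ and, by Theorem \ref{thm:tranverseMarkov}, $\Lambda'$ is transversely isotopic to $\widehat{\sigma_1\sigma_2}$, the unknot --- contradicting the hypothesis. Thus $|q|\ge 2$, and the reduction of the previous paragraph delivers $0<p<q$.

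I expect the main obstacle to be the bookkeeping in the middle step: checking that braid conjugation really induces $\gamma\mapsto\tau_\alpha^{\pm1}(\gamma)$ with the correct sign and composition order, and confirming that the operation moves only $\gamma$ and not $\alpha$. The passage between conjugacy classes of $3$-braids and of monodromies is standard once the matching presentations are in hand, but I would state it carefully, since it is precisely what converts the topological input ``$\Lambda'$ is not the unknot'' into the arithmetic statement $|q|\ge 2$. As a consistency check, one can compute $|H_1(\Sigma_2(\Lambda'))|=|\det(M-\mathrm{Id})|=q^2$ for the monodromy matrix $M$ of $\tau_\alpha\tau_\gamma$, which agrees with $\det(\Lambda')$ and confirms that small $|q|$ is exactly the trivial or degenerate regime.
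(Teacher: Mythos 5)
Your proof is correct, and its engine is the same as the paper's: conjugating $\sigma_1 B\sigma_1 B^{-1}$ by $\sigma_1^{k}$ replaces $B$ by $\sigma_1^{k}B$, hence $\gamma$ by $\tau_\alpha^{k}(\gamma)$, so the slope moves by $(p,q)\mapsto(p+kq,q)$ and Euclidean reduction gives $0<p<q$ once $q\ge 2$. Where you genuinely diverge is in excluding the degenerate slopes $q=0,1$, which is exactly where the hypothesis that $\Lambda'$ is not the unknot must enter. The paper works at the level of the $3$-manifold: for $q=0$ the open book yields $\Sigma_2(\Lambda')\cong (S^1\times S^2)\#\RR\mathbb{P}^3$, which is not a rational homology sphere and so cannot be the branched double cover of a knot (since $|H_1(\Sigma_2(K))|=\det(K)\neq 0$); for $q=1$ it yields $\Sigma_2(\Lambda')\cong S^3$, whence $\Lambda'$ is the unknot, implicitly invoking the deep fact (via the Smith conjecture) that only the unknot has branched double cover $S^3$. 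You instead work at the level of the braid group, using the isomorphism $B_3\cong \mathrm{MCG}(F_{1,1})$ (Birman--Hilden; both groups carry the presentation of Remark \ref{remark:TorusDtwistcombinatorics}): since the monodromy is literally the image of the braid, faithfulness turns $\gamma=\alpha$ into $\beta=\sigma_1^2$, whose closure is a $3$-component link rather than a knot, and $i(\alpha,\gamma)=1$ into $\beta$ conjugate to $\sigma_1\sigma_2$, whose closure is the unknot. This is more elementary in that it avoids both the $3$-manifold identifications and the Smith conjecture, at the cost of needing injectivity of $B_3\to\mathrm{MCG}(F_{1,1})$, which is standard but, as you yourself note, should be stated carefully (e.g.\ via Birman--Hilden, or because the generator-matching surjection between two groups with the same presentation is an isomorphism when the source, like $B_3$, is Hopfian). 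Two cosmetic points: you use $\beta$ for both the braid and the $(0,1)$ curve, and for the final contradiction smooth conjugation-invariance of braid closures already suffices, so the full transverse Markov theorem is not needed.
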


\begin{proof}
By Lemma \ref{lem:sigma1braid}, we know $\Lambda$ is the closure of a braid of the form $\sigma_1 B_0 \sigma_1 B_0^{-1}$, and that after applying Proposition \ref{prop:OBD}, $(\Sigma_2(\Lambda'),\xi)$ corresponds to an open book decomposition $(F,\phi)$ where $\phi$ is given by a Dehn twist on $\alpha$ and on $\gamma$, where 
$$\gamma=(\mu_1\dots\mu_m)(\alpha)=\mu_1\circ\dots\circ\mu_m(\alpha),$$
where the $\mu_i\in\{\tau_{\alpha}^{\pm1},\tau_{\beta}^{\pm1}\}$ correspond to the braid generators in $B_0$. 

By Remark \ref{remark:TorusDtwistcombinatorics}, $\gamma$ corresponds to a curve of slope $\frac{p}{q}\in \QQ \mathbb{P}^1$. Without loss of generality, we may assume that $q\geq0$. 

We know $q\neq 0$ since $(p,q)=(1,0)$ corresponds to 
$$\Sigma_2(\Lambda')\cong(S^1\times S^2)\#\RR\mathbb{P}^3,$$ 
but $\Sigma_2(\Lambda')$ must be a rational homology sphere so there is no such $\Lambda'$. 

If $p=1$, then if either $q=0$ or $q=1$, then 
$$\Sigma_2(\Lambda')\cong S^3,$$ and so $\Lambda'$ is the unknot. Thus $q\geq2$. Notice the following equivalence of braids:
\begin{align*}
	\sigma_1 B_0 \sigma_1 B_0^{-1}=&\sigma_1^k(\sigma_1 B_0\sigma_1 B_0^{-1})\sigma_1^{-k}\\
	=&\sigma_1(\sigma_1^k B_0)\sigma_1(B_0^{-1}\sigma_1^{-k}).
\end{align*}
for any $k\in \ZZ$. The braid $\sigma_1^kB_0$ corresponds to the Dehn twists 
$$\tau_\alpha^k\circ\mu_1\circ\dots\circ\mu_m$$ 
and the braid $(\sigma_1^k B_0)\sigma_1 (B_0^{-1}\sigma_1^{-k})$ corresponds to the Dehn twists
$$(\tau_\alpha^k\circ\mu_1\circ\dots\circ\mu_m)\circ\tau_\alpha\circ(\mu_m^{-1}\circ\dots\circ\mu_1^{-1}\circ \tau_\alpha^{-k})= \tau_\alpha^k\mu_1\dots\mu_m(\alpha).$$ 
Since $\gamma=\mu_1\dots\mu_m(\alpha)$ corresponds to a $(p,q)$ curve, applying an additional $k$ instances of $\tau_\alpha$ to $\mu_1\dots\mu_m(\alpha)$ corresponds to the curve of slope $(p+kq,q).$

Thus, we can replace $B_0$ with the braid $B=\sigma_1^kB_0$ where $k\in \ZZ$ satisfies $0<p+kq<q$. We have an equivalence of braids up to conjugation:
$$\sigma_1 B_0 \sigma_1 B_0^{-1} = \sigma_1 B\sigma_1 B^{-1},$$
and $B\sigma_1B^{-1}$ corresponds to a curve $\gamma$ with slope $\frac{p'}{q}$ with 
$0 < p'=p+kq < q.$ \qedhere
\end{proof}

\section{Drawing Weinstein diagrams}\label{sec:drawing_diagrams}

We now want to obtain a Weinstein handlebody diagram in Gompf standard form \cite{Gom98} for $X_\Lambda$ of the previous corollary. To obtain the attaching spheres the 2-handles we will use the recipe laid out by Casals and Murphy \cite{CasMur19}, which uses the following proposition to determine the attaching maps as simplified contact surgery curves on the boundary:

\begin{prop}\cite{CasMur19}\label{prop:CM}
Let $(Y,\xi)$ be a contact manifold with open book decomposition $(F,\phi)$ and let $\lambda$ denote the Liouville form on $F$. Let $S,L\subset(F,\lambda)$ be two exact Lagrangian submanifolds such that $S$ is diffeomorphic to a sphere. Suppose that the potential functions $\psi_S$ and $\psi_L$ where $d\psi_S=\lambda|_S$ and $d\psi_L =\lambda|_L$ are $C^0$-bounded by a small enough $\epsilon >0$, and consider the contact manifold $(Y',\xi')$ obtained by performing $(+1)$-surgery along $\Lambda^\epsilon_S$, an $\epsilon$-pushoff of $\Lambda$ by $\psi$, and $(-1)$-surgery along $\Lambda^{5\epsilon}_S$. Then:
\begin{enumerate}
\item there exists a canonical contact identification $(Y,\xi) = (Y',\xi')$ and
\item the Legendrian $\Lambda^{3\epsilon}_L\subset(Y',\xi')$ is Legendrian isotopic to $\Lambda^0_{\tau_S(L)}\subset (Y,\xi)$.
\end{enumerate}
In an analogous manner, performing contact $(-1)$ and $(+1)$-surgeries along $\Lambda^\epsilon_S$ and $\Lambda^{5\epsilon}_S$ in $(Y,\xi)$ results in a contact manifold $(Y,\xi)$ with a contact identification $(Y,\xi)=(Y',\xi')$ under which $\Lambda^{3\epsilon}_L\subset (Y,\xi)$ is Legendrian isotopic to $\Lambda_{\tau^{-1}_S(L)}\subset (Y,\xi)$.
\end{prop}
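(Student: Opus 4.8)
The plan is to separate the two assertions: the contact identification $(Y,\xi)=(Y',\xi')$ is a pure cancellation statement, while the Legendrian isotopy statement for $\Lambda_L$ carries all of the geometric content. For part (1) I would invoke the Ding--Geiges cancellation result quoted above. Both $\Lambda^\epsilon_S$ and $\Lambda^{5\epsilon}_S$ are pushoffs of the single Legendrian lift $\Lambda_S$ of the exact Lagrangian $S$, and the hypothesis that $\psi_S$ is $C^0$-bounded by a small $\epsilon$ is exactly what guarantees that the lifts at Reeb-heights $\epsilon$ and $5\epsilon$ are genuine, disjoint Legendrian pushoffs of one another (the variation of the potential within each copy is smaller than the gap between copies). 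A contact $(+1)$-surgery together with a $(-1)$-surgery along such a parallel pair is a cancelling pair, so the surgered manifold is canonically contactomorphic to $(Y,\xi)$; this gives the identification in (1). As a sanity check, in the abstract open-book picture a $(-1)$-surgery on a page sphere contributes a positive Dehn twist $\tau_S$ to the monodromy and a $(+1)$-surgery contributes $\tau_S^{-1}$, so the net change to the monodromy is trivial, consistent with $(Y',\xi')\cong(Y,\xi)$.

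For part (2) I would realize this cancellation concretely and track $\Lambda^{3\epsilon}_L$ through it. The model to keep in mind is the Reeb/page-rotation picture of the open book: lifting a page-Lagrangian $M$ with potential $\psi_M$ produces a Legendrian $\Lambda_M$ whose height in the Reeb direction is governed by $\psi_M$, and the superscripts $\epsilon,3\epsilon,5\epsilon$ record three copies placed at increasing Reeb-heights. Thus $\Lambda^{3\epsilon}_L$ sits \emph{strictly between} the two copies of $S$, and this is why only one of the two surgeries acts on it. To perform the cancellation I would handle-slide $\Lambda^{3\epsilon}_L$ over the cancelling handle pair, using the Ding--Geiges description of a handle slide as a band sum of the sliding Legendrian with a pushoff of the Legendrian being slid over. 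Because $L$ lies between the two copies of $S$, it gets slid over exactly one of them, producing a band sum of $\Lambda_L$ with a single parallel copy of $\Lambda_S$.

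The crux is to identify this band-sum operation, performed in the contact manifold, with the generalized Dehn twist $\tau_S$ performed in the page. Here I would pass to the standard Weinstein neighborhood of the Lagrangian sphere $S$, namely a disk subbundle of $T^*S$, in which $\tau_S$ is the model generalized Dehn twist built from the normalized cogeodesic flow. I would then match the band sum of $\Lambda_L$ with the pushoff of $\Lambda_S$ against the Lagrangian (Polterovich) surgery of $L$ and $S$ along $L\cap S$, which is the standard surgery description of $\tau_S(L)$ due to Seidel. This identification shows $\Lambda^{3\epsilon}_L$ is Legendrian isotopic to $\Lambda^0_{\tau_S(L)}$. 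The sign of the twist is dictated by the cyclic order of the two surgeries in the Reeb direction: with the $(+1)$-surgery at height $\epsilon$ and the $(-1)$-surgery at height $5\epsilon$ one obtains $\tau_S$, while exchanging the two surgeries reverses the direction in which $\Lambda^{3\epsilon}_L$ is slid and hence yields $\tau_S^{-1}$, which is the "analogous manner" statement.

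The step I expect to be the main obstacle is precisely this final identification: verifying in the $T^*S$ model that the handle-slide band sum reproduces the generalized Dehn twist \emph{exactly}, rather than merely up to some ambient diffeomorphism, including the orientation and sign bookkeeping, and checking that the $C^0$-smallness of $\psi_S$ and $\psi_L$ keeps the three pushoffs in the correct Reeb order so that $\Lambda^{3\epsilon}_L$ is slid over $S$ once and only once. Everything else is either a direct appeal to Ding--Geiges or routine tracking of Legendrian fronts.
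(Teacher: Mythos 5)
A preliminary remark: the paper does not prove this proposition at all --- it is imported verbatim from \cite{CasMur19} and used as a black box in Section 7 --- so your attempt can only be measured against the argument in that reference, not against anything in this text. That said, your architecture is the expected one. Part (1) is indeed the cancellation lemma quoted in Section 5 from \cite{DinGei09}; note, though, that $\Lambda_S^\epsilon$ and $\Lambda_S^{5\epsilon}$ are lifts of the \emph{same} exact Lagrangian with the \emph{same} potential, so they differ exactly by a translation in the Reeb direction of the standard neighbourhood of the page and are automatically contact-framed pushoffs of one another, independently of the $C^0$-bound. The real role of the bound on $\psi_S,\psi_L$ is the interleaving you invoke later: it keeps the heights of $\Lambda_L^{3\epsilon}$ (in $(2\epsilon,4\epsilon)$) strictly between those of $\Lambda_S^{\epsilon}$ (in $(0,2\epsilon)$) and $\Lambda_S^{5\epsilon}$ (in $(4\epsilon,6\epsilon)$), which is the mechanism behind part (2). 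Your monodromy sanity check is also correct.

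Two genuine gaps remain in part (2). First, the claim that $\Lambda_L^{3\epsilon}$ ``gets slid over exactly one of them, producing a band sum of $\Lambda_L$ with a single parallel copy of $\Lambda_S$'' fails whenever $|L\cap S|>1$: each transverse intersection point of $L$ with $S$ contributes a strand of $\Lambda_L^{3\epsilon}$ running through the cancellation region, each such strand must be slid, and the output is one band sum per intersection point. This case is not marginal for the present paper: in Section 7 the proposition is applied with $L$ a $(p,q)$-curve and $S=\alpha$ or $\beta$, which meet in $q$ or $p$ points respectively, so the multi-point bookkeeping is exactly what the diagrams of Figures \ref{fig:pq_3}--\ref{fig:pq_5} encode. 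Accordingly, the object to be compared with $\tau_S(L)$ is the \emph{iterated} Lagrangian surgery of $L$ with one parallel copy of $S$ at each intersection point. Second, the identification of that iterated band sum with $\Lambda^0_{\tau_S(L)}$ --- the step you explicitly defer --- is not a side issue; it is the entire content of the proposition. Quoting Seidel's surgery description of the Dehn twist only reduces the problem to two things you would still have to prove: (a) that the Ding--Geiges band sum performed in the surgered manifold is the Legendrian lift of the Polterovich surgery performed in the page, with the correct handle and sign choice at every intersection point (this is where $\tau_S$ versus $\tau_S^{-1}$ is actually decided, not merely by the cyclic order of the two surgery heights); and (b) that the exact Lagrangian isotopy in $(F,\lambda)$ provided by Seidel's theorem can be taken with $C^0$-small potentials, so that it lifts to a genuine Legendrian isotopy between the slid Legendrian and $\Lambda^0_{\tau_S(L)}$ inside $(Y,\xi)$, staying disjoint from the surgery region throughout. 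As it stands, your proposal is a correct and well-organized plan whose decisive step is named but not carried out.
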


The recipe of \cite{CasMur19} is designed to find the attaching spheres in the Weinstein diagram by finding the lifts of corresponding vanishing cycles consisting of Dehn twists about some known spheres. In their paper, these vanishing cycles are obtained from a bifibration. For our Lefschetz fibrations, we already have the vanishing cycles in terms of Dehn twists, so we skip the steps of the recipe which construct the bifibration. We list the relevant steps here.

Let $\pi:(X,\omega)\to D^2$ be a Weinstein Lefschetz fibration with generic fiber $F$ and vanishing cycles $C_1,\dots,C_k$.

\begin{enumerate}
	\item Choose a set of exact Lagrangian spheres $\mathbb{L} = \{L_1 ,\dots,L_r \}$ in the generic fiber $F$ for which we understand the Legendrian lifts in the front projection of the contact boundary $\partial(F \times D^2)$. 
	\item Express each $\tau_{C_i}$ as a word in Dehn twists about the Lagrangian spheres in the set $\mathbb{L}$.
	\item For each vanishing cycle $C_i$, we apply Proposition \ref{prop:CM} to draw the front projection of their Legendrian lifts $\Lambda_i\subset \partial(F \times D^2)$.
	\item Then we consider the Legendrian link $\bigcup_i \Lambda_i$ determined by the cyclic ordering of the indices $i$: we push the Legendrian component $\Lambda_i$ in the Reeb direction by height equal to its index $i$, and this gives a well-defined link.
	\item Simplify the Weinstein handlebody diagram by applying the Legendrian Reidemeister moves, Gompf moves \cite{Gom98} (see Figure \ref{Gompf456}), handleslides \cite{DinGei04} (See Figure \ref{conecusp}), and handle cancellations. 
\end{enumerate}

\begin{figure}
	\begin{center}
		\includegraphics[width=12cm]{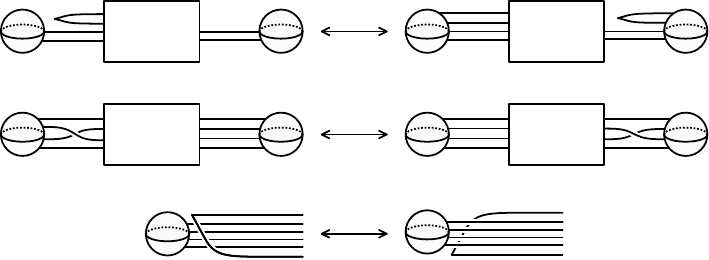}
		\caption{Gompf's three isotopic moves, up to 180 degree rotation about each axis. We call these moves Gompf 4, 5, and 6 respectively.}
		\label{Gompf456}
	\end{center}
\end{figure}

\begin{figure}
	\begin{center}
		\includegraphics[width=10 cm]{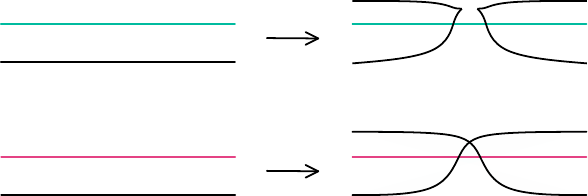}
		\caption[Handle slides in surgery diagrams.]{Top: handle sliding over a $(-1)$-surgery Legendrian (turquoise). Bottom: handle sliding over a $(+1)$-surgery Legendrian (pink) resulting in a cusp and a crossing respectively. The crossing may appear on either the top or bottom of the pink strand by a single application of Reidemeister 3.}
		\label{conecusp}
	\end{center}
\end{figure}

We apply this recipe to obtain the general form of the Weinstein diagram of a filling $X_\Lambda$ of $\Sigma_2(\Lambda')$:

\newtheorem*{thm:diagram}{Theorem \ref{thm:diagram}}
\begin{thm:diagram}
	Let $\Lambda\neq U$ be a Legendrian knot which is smoothly the closure of a quasipositive 3-braid of algebraic length 2. Let $\Lambda'$ be a positive transverse push off of $\Lambda$. Then there is a filling of $\Sigma_2(\Lambda')$, the double cover of $S^3$ branched over $\Lambda'$, given by the handle decomposition consisting of a single 1-handle and a single 2-handle pictured in Figure \ref{fig:d_Lambdaagain}.
\end{thm:diagram}

\begin{figure}
	\begin{center}
		\begin{tikzpicture}
			\node[inner sep=0] at (0,0) {\includegraphics[width=5 cm]{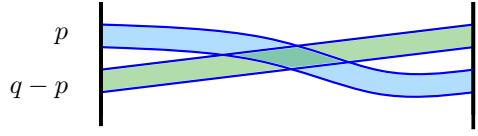}};
			\node at (-3, 0.4){$p$};
			\node at (-3.3, -0.3){$q-p$};
		\end{tikzpicture}
	\end{center}
	\caption[A Weinstein filling of $\Sigma_2(\Lambda')$.]{A Weinstein filling of $\Sigma_2(\Lambda')$. The shaded blue and shaded green represent $p$ and $q-p$ parallel strands respectively of a single Legendrian link in $S^1\times S^2$ where $0<p<q$ and $(p,q)=1$.}
	\label{fig:d_Lambdaagain}
\end{figure}

\begin{proof}
Let $\Lambda'$ be a transverse knot which is the closure of a quasipositive 3-braid of algebraic length 2. Then by Corollary \ref{cor:WeinsteinLF}, the double cover of $S^3$ branched over $\Lambda'$, $(\Sigma_2(\Lambda'),\xi)$ has Weinstein filling $X_\Lambda$. $X_\Lambda$ has a Weinstein Lefschetz fibration $\pi:X\rightarrow D^2$ with generic fiber $F$ a torus with one boundary component, and monodromy given by vanishing cycles which are Dehn twists along the curves $\alpha$ and $\gamma$. $\alpha$ is the $(1,0)$ curve and $\gamma$ is some $(p,q)$ curve in $F$. We apply Lemma \ref{lemma:qgreaterthanp} to ensure that the $(p,q)$ curve satisfies $0<p<q$.

\begin{figure}
	\begin{center}
		\includegraphics[width=12cm]{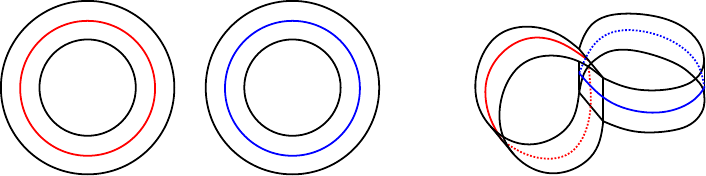}
		\caption[The Murasugi sum of two annuli paged open books.]{The annuli $A_1$ and $A_2$, and the fiber of the open book decomposition $(F_0,\phi_0)$ given by the Murasugi sum $(A_1,\tau_\alpha)*(A_2,\tau_\beta)$. The curve $\alpha$ is in red and the curve $\beta$ is in blue.}
		\label{fig:annuli}
	\end{center}
\end{figure}

\begin{figure}
	\begin{center}
		\begin{tikzpicture}
			\node[inner sep=0] at (0,0) {\includegraphics[width=12cm]{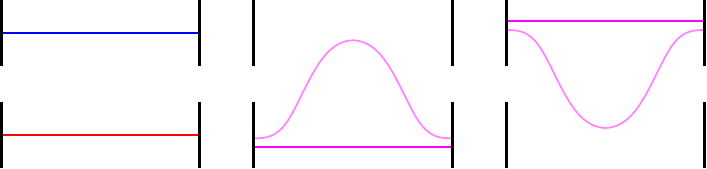}};
			\node at (-6.3,-1){$\alpha$};
			\node at (-6.3,1){$\beta$};
			\node at (1.2,-1.5){$+1$};
			\node at (1.2, 0.5){$-1$};
			\node at (5.5, 1.5){$+1$};
			\node at (5.5, -0.5){$-1$};
			\node at (-4.2, -1.9){\textbf{(1.)}};
			\node at (0, -1.9){\textbf{(2.)}};
			\node at (4.2, -1.9){\textbf{(3.)}};
		\end{tikzpicture}
		\caption[Surgeries corresponding to Dehn twists.]{From left to right:\\\hspace{\textwidth}
			\textbf{(1.)} the Legendrian lifts $\Lambda_\alpha$ and $\Lambda_\beta$ of $\alpha$ and $\beta$,\\\hspace{\textwidth}
			\textbf{(2.)} The $(+1)$ and $(-1)$ surgery curves for a Dehn twist $\tau_\alpha$, 	\\\hspace{\textwidth}
			\textbf{(3.)} The $(+1)$ and $(-1)$ surgery curves for a Dehn twist  $\tau_\beta^{-1}$.}
		\label{fig:skeleton}
	\end{center}
\end{figure}

We now apply the recipe of Casals and Murphy. \textbf{Step 1:} We need a set of Lagrangian spheres for which we understand the Legendrian lifts. We choose the curves $\alpha$ and $\beta$ of slopes $(1,0)$ and $(0,1)$ respectively in $F$, see Figure \ref{fig:curvesontorus}.  To find the Legendrian lifts $\Lambda_\alpha$ and $\Lambda_\beta$ of $\alpha$ and $\beta$, we consider the following. Let $(F_0,\phi_0)$ be the open book decomposition given by $F_0\defeq F$ and $\phi_0$ given by Dehn twists $\tau_\alpha$ and $\tau_\beta$. This open book decomposition is the Murasugi sum $(A_1,\tau_\alpha)*(A_2,\tau_\beta)$ where $A_1, A_2$ are annuli with monodromy given by Dehn twists about their cores, $\alpha$ and $\beta$, see Figure \ref{fig:annuli}. Thus,
$$(F_0,\phi_0)=(S^3\#S^3,\xi_{std}\#\xi_{std})= (S^3,\xi_{std})$$
and the Legendrian lifts $\Lambda_\alpha$ and $\Lambda_\beta$ of $\alpha$ and $\beta$ are the lifts of the cores of the annuli $A_1$ and $A_2$. $\Lambda_\alpha$ and $\Lambda_\beta$ each correspond to the attaching sphere of a critical 2-handle cancelling a subcritical 1-handle attachment, as in Figure \ref{fig:skeleton}. 

\textbf{Step 2} is to express $\alpha$ and $\gamma$ as the images of $\alpha$ under words in Dehn twists along $\alpha$ and $\beta$. $\alpha$ is as given. For $\gamma$ it suffices to find a series of Dehn twists about $\alpha$ and $\beta$ on the curve $(1,0)$ to obtain the curve with slope $(p,q)$ in the torus with one boundary component. Following Table \ref{table:pq}, $(p,q)$ is obtained by some sequence of the Dehn twists $\tau_\alpha$ and $\tau_{\beta}^{-1}$. As in Remark \ref{remark:TorusDtwistcombinatorics}, we perform the Euclidean algorithm to obtain successive coefficients $n_i\in\ZZ^+$:
\begin{align*}
	q&=n_kp+r_k\\
	p&=n_{k-1}r_k+r_{k-1}\\
	r_k&=n_{k-2}r_{k-1}+r_{k-2}\\
	&\dots\\
	r_3&=n_3r_2+r_1\\
	r_2&=n_2r_1+1\\
	r_1&=n_1.
\end{align*}
Then,
$$\tau_{\beta}^{-n_k}\circ\tau_\alpha^{n_{k-1}}\circ\tau_{\beta}^{-n_{k-2}}\circ\dots\circ\tau_{\beta}^{-n_1}(\alpha) = \gamma.$$

\textbf{Step 3} is to use Proposition \ref{prop:CM} repeatedly to obtain the Legendrian lifts $\Lambda_\alpha$ and $\Lambda_\gamma$. $\Lambda_\alpha$ is given and consists of an unknotted Legendrian knot which cancels the 1-handle labelled $\alpha$ in Figure \ref{fig:skeleton}. 

To find $\Lambda_\gamma$, we draw the curve
$$\Lambda_{\tau_{\beta}^{-n_k}\circ\tau_\alpha^{n_{k-1}}\circ\tau_{\beta}^{-n_{k-2}}\circ\dots\circ\tau_{\beta}^{-n_1}(\alpha)}.$$

\begin{figure}
	\begin{center}
		\begin{tikzpicture}
			\node[inner sep=0] at (0,0) {\includegraphics[width=12cm]{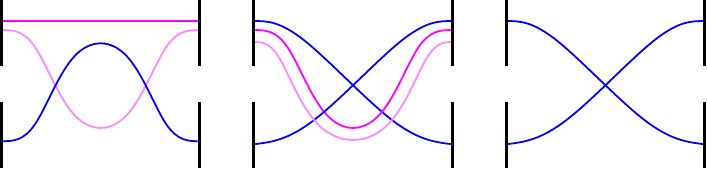}};
			\node at (-4.5, 1.4){$+1$};
			\node at (-4.5,-1.1){$-1$};
			\node at (-4.2, -1.9){\textbf{(1.)}};
			\node at (0, -1.9){\textbf{(2.)}};
			\node at (4.2, -1.9){\textbf{(3.)}};
		\end{tikzpicture}
		\caption[How to draw the Legendrian lift of $\tau_{\beta}^{-1}(\alpha)$.]{The Legendrian lift of $\tau_{\beta}^{-1}(\alpha)$. From left to right:\\\hspace{\textwidth}
			\textbf{(1.)} First the diagram given by Proposition \ref{prop:CM}, \\\hspace{\textwidth}
			\textbf{(2.)} we perform a handle slide of the blue curve over the $(+1)$ curve, 
			\\\hspace{\textwidth}
			\textbf{(3.)} and then we cancel out the parallel $(+1)$ and $(-1)$ surgery curves.}
		\label{fig:pq_1}
	\end{center}
\end{figure}

\begin{figure}
	\begin{center}
		\begin{tikzpicture}
			\node[inner sep=0] at (0,0) {\includegraphics[width=14cm]{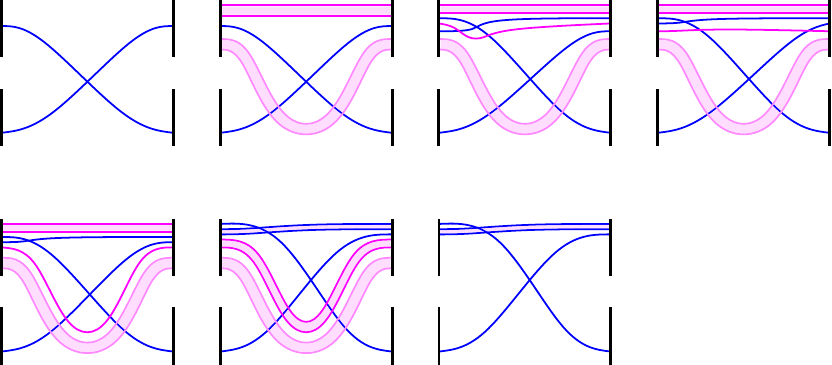}};
			\node at (-2.5, 3.2){$+1$};
			\node at (-2.5, 0.7){$-1$};
			\node at (-5.5, 0.2){\textbf{(1.)}};
			\node at (-1.85, 0.2){\textbf{(2.)}};
			\node at (1.85, 0.2){\textbf{(3.)}};
			\node at (5.5, 0.2){\textbf{(4.)}};
			\node at (-5.5, -3.4){\textbf{(5.)}};
			\node at (-1.85, -3.4){\textbf{(6.)}};
			\node at (1.85, -3.4){\textbf{(7.)}};
		\end{tikzpicture}
		\caption{How to draw the Legendrian lift of $\tau_{\beta}^{-n_1}(\alpha)$. Shaded pink ribbons represent $n_1-1$ parallel $(+1)$ and $(-1)$ curves given by Proposition \ref{prop:CM}. Describing these 7 diagrams in order: \\\hspace{\textwidth}
			\textbf{(1.)} $\tau_{\beta}^{-1}(\alpha)$, \\\hspace{\textwidth}
			\textbf{(2.)} $n_1-1$ parallel $(+1)$ and $(-1)$ curves added in, \\\hspace{\textwidth}
			\textbf{(3.)} we slide the blue curve over the lowest of the parallel $(+1)$ curves (the topmost pink ribbon now represents $n_1-2$ parallel strands), \\\hspace{\textwidth}
			\textbf{(4.)} Gompf 5, \\\hspace{\textwidth}
			\textbf{(5.)} Reidemeister 3, \\\hspace{\textwidth}
			\textbf{(6.)} Repeat steps 3-6 with remaining $(+1)$ curves, and \\\hspace{\textwidth}
			\textbf{(7.)} we cancel the $(+1)$ and $(-1)$ curves to obtain $\tau_{\beta}^{-n_1}(\alpha)$.}
		\label{fig:pq_2}
	\end{center}
\end{figure}

We apply Proposition \ref{prop:CM} repeatedly to obtain the surgery diagram. This means repeatedly adding in a $(+1)$ and a $(-1)$ surgery curve along either $\alpha$ or $\beta$ with heights determined by Proposition \ref{prop:CM}, see Figure \ref{fig:skeleton}, and sliding over the current handle to cancel them out. We see that applying $\tau_\beta^{-1}$ increases the number of strands going through the topmost 1-handle labelled $\beta$ in Figure \ref{fig:skeleton}, and applying $\tau_\alpha$ increases the number of strands going through the bottom 1-handle labelled $\alpha$ in Figure \ref{fig:skeleton}. 

To begin, the lift of $\tau_{\beta}^{-1}(\alpha)$ is given by Figure \ref{fig:pq_1}. Next, assuming $n_1 > 1$, applying more twists $\tau_{\beta}^{-n_1-1}$ results in the series of diagrams in Figure \ref{fig:pq_2}. In $\tau_{\beta}^{-n_1}(\alpha)$, there are now $n_1$ strands going through the 1-handle labelled $\beta$ and 1 strand going through the 1-handle labelled $\alpha$.  

\begin{figure}
	\begin{center}
		\begin{tikzpicture}
			\node[inner sep=0] at (0,0) {\includegraphics[width=14cm]{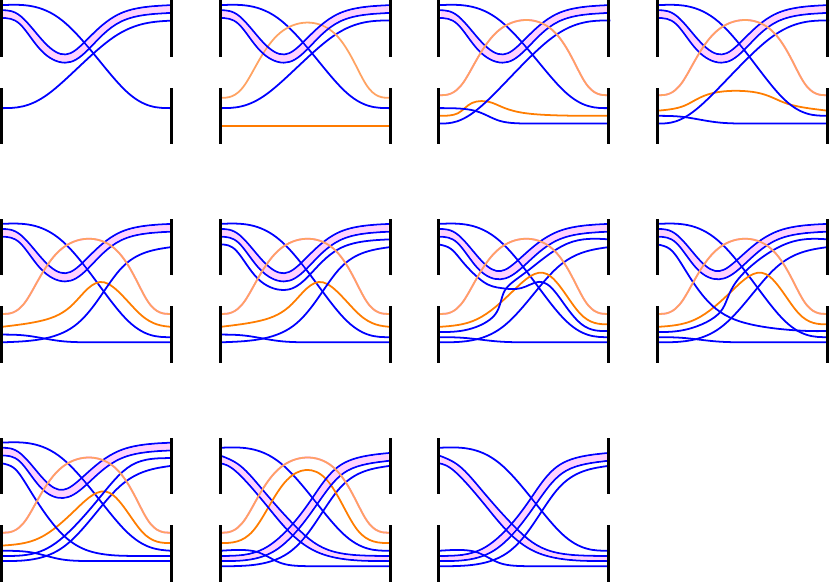}};
			\node at (-1.7, 4.75){$-1$};
			\node at (-1, 2.55){$+1$};
			\node at (-5.5, 2.1){\textbf{(1.)}};
			\node at (-1.85, 2.1){\textbf{(2.)}};
			\node at (1.85, 2.1){\textbf{(3.)}};
			\node at (5.5, 2.1){\textbf{(4.)}};
			\node at (-5.5, -1.6){\textbf{(5.)}};
			\node at (-1.85, -1.6){\textbf{(6.)}};
			\node at (1.85, -1.6){\textbf{(7.)}};
			\node at (5.5, -1.6){\textbf{(8.)}};
			\node at (-5.5, -5.4){\textbf{(9.)}};
			\node at (-1.85, -5.4){\textbf{(10.)}};
			\node at (1.85, -5.4){\textbf{(11.)}};
		\end{tikzpicture}
		\caption{How to draw the Legendrian lift of $\tau_\alpha\tau_{\beta}^{-n_1}(\alpha)$. Shaded pink ribbons represent $n_1-1$ parallel curves. Describing these 11 diagrams in order: \\\hspace{\textwidth}
			\textbf{(1.)} $\tau_{\beta}^{-n_1}(\alpha)$, \\\hspace{\textwidth}
			\textbf{(2.)} a $(+1)$ and a $(-1)$ curve added in, \\\hspace{\textwidth}
			\textbf{(3.)} we slide the blue curve over the orange $(+1)$ curve, \\\hspace{\textwidth}
			\textbf{(4.)} Gompf 5, \\\hspace{\textwidth}
			\textbf{(5.)} Reidemeister 3, \\\hspace{\textwidth}
			\textbf{(6.)} we separate the bottom most blue curve from the pink ribbon (which now represents $n_1-2$ parallel strands), \\\hspace{\textwidth}
			\textbf{(7.)} we slide the blue curve over the orange, \\\hspace{\textwidth}
			\textbf{(8.)} Reidemeister 3, \\\hspace{\textwidth}
			\textbf{(9.)} Reidemeister 3, \\\hspace{\textwidth}
			\textbf{(10.)} repeat steps 7-9 with the remaining strands in the pink ribbon, \\\hspace{\textwidth}
			\textbf{(11.)} we cancel the $(+1)$ and $(-1)$ orange curves to obtain $\tau_\alpha\tau_{\beta}^{-n_1}(\alpha)$.}
		\label{fig:pq_3}
	\end{center}
\end{figure}

We now apply $\tau_\alpha$ to $\tau_{\beta}^{-n_1}(\alpha)$, resulting in the series of diagrams in Figure \ref{fig:pq_3}. In $\tau_\alpha\tau_{\beta}^{-n_1}(\alpha)$, there are now $n_1$ strands going through the 1-handle labelled $\beta$ and $n_1+1$ strands going through the 1-handle labelled $\alpha$. Now assuming $n_2>1$, we apply $\tau_\alpha^{n_2-1}$ to $\tau_\alpha\tau_{\beta}^{-n_1}(\alpha)$. We obtain the diagrams in Figure \ref{fig:pq_4}. In the last diagram showing $\tau_\alpha^{n_2}\tau_{\beta}^{-n_1}(\alpha)$, we can count $n_1$ strands going through the handle labelled $\beta$ and $n_1n_2+1$ strands going through the handle labelled $\alpha$.

\begin{figure}
	\begin{center}
		\begin{tikzpicture}
			\node[inner sep=0] at (0,0) {\includegraphics[width=14cm]{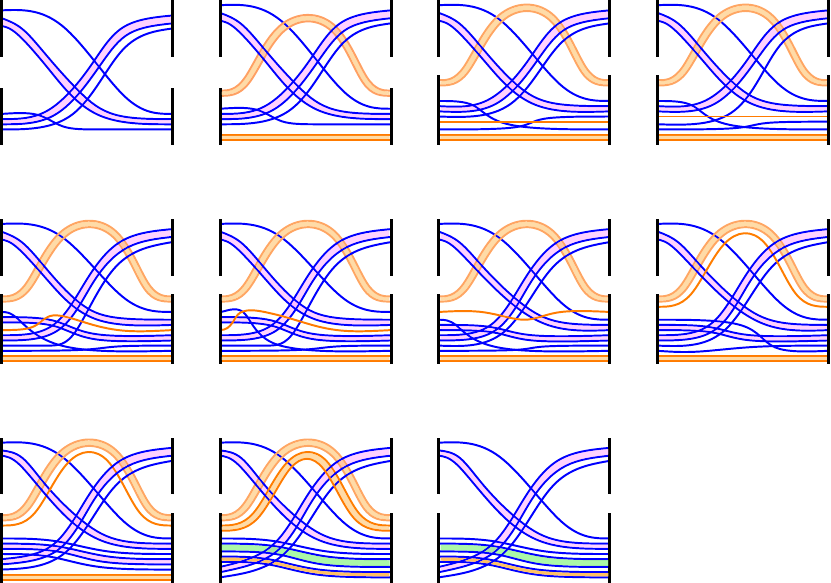}};
			\node at (-1.5, 4.8){$-1$};
			\node at (-1, 2.3){$+1$};
			\node at (-5.5, 2.1){\textbf{(1.)}};
			\node at (-1.85, 2.1){\textbf{(2.)}};
			\node at (1.85, 2.1){\textbf{(3.)}};
			\node at (5.5, 2.1){\textbf{(4.)}};
			\node at (-5.5, -1.6){\textbf{(5.)}};
			\node at (-1.85, -1.6){\textbf{(6.)}};
			\node at (1.85, -1.6){\textbf{(7.)}};
			\node at (5.5, -1.6){\textbf{(8.)}};
			\node at (-5.5, -5.4){\textbf{(9.)}};
			\node at (-1.85, -5.4){\textbf{(10.)}};
			\node at (1.85, -5.4){\textbf{(11.)}};
		\end{tikzpicture}
		\caption[Drawing $\tau_\alpha^{n_2}\tau_{\beta}^{-n_1}(\alpha)$.]{Shaded pink ribbons represent $n_1-1$ parallel curves, shaded orange regions represent $n_2-1$ parallel curves, shaded green region represents $(n_1-1)(n_2-1)$ curves. Describing these 11 diagrams in order:\\\hspace{\textwidth}
			\textbf{(1.)} $\tau_\alpha\tau_{\beta}^{-n_1}(\alpha)$, \\\hspace{\textwidth}
			\textbf{(2.)} $n_2-1$ $(+1)$ and $(-1)$ curves added in, \\\hspace{\textwidth}
			\textbf{(3.)} slide the blue curve over the topmost of the parallel $(+1)$ curves, \\\hspace{\textwidth}
			\textbf{(4.)} Gompf 5, \\\hspace{\textwidth}
			\textbf{(5.)} $n_1-1$ handleslides of the blue curves in the pink ribbon over the orange $(+1)$ curve, \\\hspace{\textwidth}
			\textbf{(6.)} repeated applications of Reidemeister 3, \\\hspace{\textwidth} 
			\textbf{(7.)} repeated applications of Gompf 5, \\\hspace{\textwidth}
			\textbf{(8.)} repeated applications of Reidemeister 3, \\\hspace{\textwidth}
			\textbf{(9.)} repeated applications of Gompf 5, \\\hspace{\textwidth}
			\textbf{(10.)} repeat steps 3-9 with the remaining strands in the orange ribbon, \\\hspace{\textwidth}
			\textbf{(11.)} we cancel the $(+1)$ and $(-1)$ curves to obtain $\tau_\alpha^{n_2}\tau_{\beta}^{-n_1}(\alpha)$.}
		\label{fig:pq_4}
	\end{center}
\end{figure}

We continue to apply successive $\tau_{\beta}^{-n_i}$ and $\tau_\alpha^{n_j}$ twists to the curve. In doing so, we obtain the diagrams of Figure \ref{fig:pq_5}. Applying $\tau_{\beta}^{-n_i}$ results in the top row of diagrams, and applying $\tau_\alpha^{n_j}$  results in the bottom row of diagrams. The count of strands passing through the 1-handles after each successive step correspond to the $r_i$ in the Euclidean algorithm.

\begin{figure}
	\begin{center}
		\begin{tikzpicture}
			\node[inner sep=0] at (0,0) {\includegraphics[width=12cm]{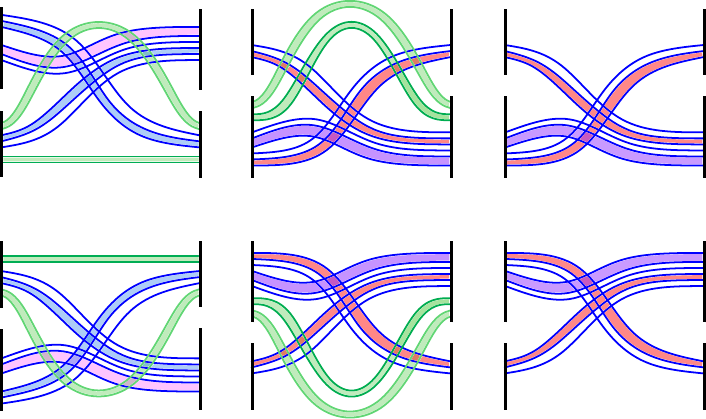}};
			\node at (-4, 3.5){$-1$};
			\node at (-4, 0.5){$+1$};
			\node at (-4, -3.5){$-1$};
			\node at (-4, -0.5){$+1$};
		\end{tikzpicture}
		\caption[Effect of applying $\tau_\alpha^{n_j}$ and $\tau_{\beta}^{-n_i}$.]{Coloured ribbons represent some number of parallel strands. In the top row, we apply Dehn twists $\tau_\alpha^{n_j}$ and see the resulting diagram (the step by step procedure follows from Figure \ref{fig:pq_4}). In the bottom row, we apply Dehn twists $\tau_{\beta}^{-n_i}$, and see the resulting diagram (the step by step procedure consists of the same diagrams as the top row, reflected about a horizontal axis).}
		\label{fig:pq_5}
	\end{center}
\end{figure}

In the case that $n_1 = 1$, the diagrams in Figure \ref{fig:pq_2} reflected horizontally give the $\tau_\alpha^{n_{2}}\tau_{\beta}^{-1}(\alpha)$, reflected versions of Figures \ref{fig:pq_3} and \ref{fig:pq_4} give $\tau_{\beta}^{-1}\tau_\alpha^{n_{2}}\tau_{\beta}^{-1}(\alpha)$ and $\tau_{\beta}^{-n_3}\tau_\alpha^{n_{2}}\tau_{\beta}^{-1}(\alpha)$ respectively. In general, we still end up with the diagrams of Figure $\ref{fig:pq_5}$, with every additional application of  $\tau_\alpha^{n_j}$ resulting in the top row of diagrams, and every additional application of  $\tau_{\beta}^{-n_i}$ resulting in the bottom row of diagrams.

Since $p<q$, the final Dehn twist applied will be a $\tau_{\beta}^{-1}$, so the final diagram will correspond to the bottom right of Figure \ref{fig:pq_5}, with $q$ strands going through the top 1-handle and $p$ strands going through the bottom 1-handle. We have found a Legendrian representative for $\Lambda_\gamma$.

We proceed with \textbf{Step 4}. Since we are dealing with only 2 curves, the lifts of $\alpha$ and $\gamma$, we choose the cyclic ordering that places $\Lambda_\gamma$ above $\Lambda_\alpha$, giving us the top left diagram of Figure \ref{fig:pq_final}.

\begin{figure}
	\begin{center}
		\includegraphics[width=8cm]{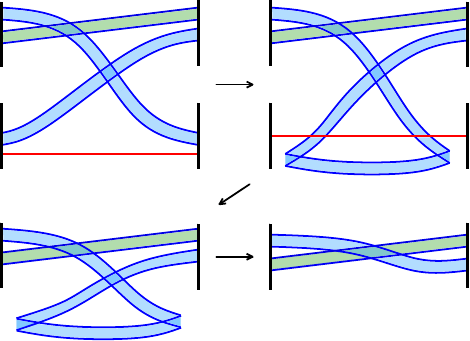}
		\caption[Simplifying the Weinstein diagram.]{The blue shaded region represents $p$ parallel curves and the green shaded region represents $q-p$ parallel curves. 1. The Weinstein diagram obtained after Step 4 of the recipe, 2. handleslide the blue curves over the red curve, 3. cancel the bottom 1-handle with the red curve, 4. perform a series of Reidemeister moves on the parallel blue curves.}
		\label{fig:pq_final}
	\end{center}
\end{figure}

\textbf{Step 5} is to simplify the handle diagram. We do so following Figure \ref{fig:pq_final} by some knot isotopies, a handle slide, and cancelling the bottom 1-handle with $\Lambda_\alpha$. The resulting diagram has one 2-handle winding about a single 1-handle $q$ times. We can see this in the attaching curve in the bottom right of Figure $\ref{fig:pq_final}$ which is colour coded: the blue shaded region represents $p$ parallel curves and the green shaded region represents $q-p$ parallel curves.
\end{proof}

We will call the Weinstein diagram in Gompf standard form obtained in Theorem \ref{thm:diagram} method $\mathcal{D}(\Lambda)$ for the knot $\Lambda$, where $\mathcal{D}(\Lambda)$ depicts a filling of $\Sigma_2(\Lambda')$. 

\begin{figure}
	\begin{center}
		\begin{tikzpicture}
			\node[inner sep=0] at (0,0) {\includegraphics[width=12cm]{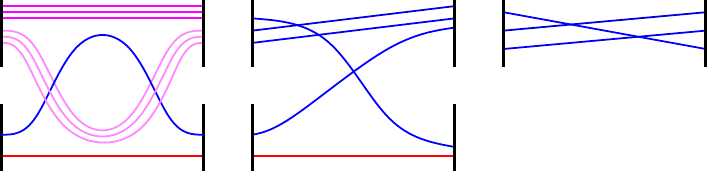}};
			\node at (-4, 1.6){$+1$};
			\node at (-4, -0.25){$-1$};
		\end{tikzpicture}
		\caption[A filling of the double cover of the transverse $m(8_{20})$ knot.]{$\mathcal{D}(m(8_{20}))$, the Weinstein diagram for the filling of the double cover of the transverse $m(8_{20})$ knot, following Theorem \ref{thm:diagram}.}
		\label{fig:8_20WD}
	\end{center}
\end{figure}

\begin{eg}\label{eg:8_20diagram}
To illustrate the proof of the previous theorem, we will apply the Weinstein handlebody diagram drawing procedure to a particular knot.

The transverse $m(8_{20})$ knot \cite{knotatlas} is the closure of the braid $\sigma_1\sigma_2^3\sigma_1\sigma_2^{-3}$ \cite{SnapPy}. We conjugate this braid word by $\sigma_1\sigma_2^{-3}$ and obtain an equivalent braid:
$$\sigma_1\sigma_2^3\sigma_1\sigma_2^{-3}=\sigma_1\sigma_2^{-3}(\sigma_1\sigma_2^3\sigma_1\sigma_2^{-3})\sigma_2^3\sigma_1^{-1} = \sigma_1\sigma_2^{-3}\sigma_1\sigma_2^3.$$

By Corollary \ref{cor:WeinsteinLF}, the double cover of $S^3$ branched over this knot has a Weinstein Lefschetz fibration $\pi:X\rightarrow D^2$ with generic fibre $F$, a torus with one boundary component containing vanishing cycles given by $\alpha$ and $\tau_{\beta}^{-3}(\alpha)$ which are curves of slope $(1,0)$ and $(1,3)$ respectively.

Following the steps in the proof of Theorem \ref{thm:diagram}, we obtain the lift of the $(1,3)$ curve and the surgery diagram $\mathcal{D}(m(8_{20}))$, as in Figure \ref{fig:8_20WD}.
\end{eg}

\begin{remark}\label{remark:more_strands_issues}
	This strategy can be used to obtain the Weinstein diagram $X_\Lambda$ for a filling of $\Sigma_2(\Lambda')$ where $\Lambda'$ is a quasipositive $n$-braid of algebraic length $n-1$ (note that other $n$-braids are obstructed from the Lagrangian double concordance by Corollary \ref{cor:n_braid}). However, the generic fiber of the Lefschetz fibration of $X_\Lambda$ for an $n$-stranded braid will have an additional 1-handle for each additional strand. Because of this added complexity, the class of possible vanishing cycles cannot be classified as easily as in the 3-braid case. Furthermore, the Weinstein diagram will involve the attachment of $n-1$ 2-handles. Thus, while it follows naturally to generate the Weinstein diagram of an $n$-braid in specific examples, it is more difficult to generate the Weinstein diagram for a general $n$-braid.
\end{remark}

\begin{eg}\label{eg:m10_140}
	We can for instance let $\Lambda$ have the same smooth type as the mirror of the $10_{140}$ knot which is given by the quasipositive $4$-stranded braid $\sigma_1^{-3}\sigma_2\sigma_1^3\sigma_2\sigma_3\sigma_2^{-1}\sigma_3$ according to \cite{knotatlas}. The braid has algebraic length 3. The Legendrian contact homology DGA of $m(10_{140})$ is stable tame isomorphic to the DGA of the standard unknot, and thus Lagrangian concordance from $\Lambda$ to the unknot cannot be obstructed by contact homology techniques. We use the construction from this chapter to construct a familiar Weinstein handlebody diagram of a filling of $\Sigma_2(\Lambda')$, so that it follows from the work of the subsequent sections of this paper that $m(10_{140})$ cannot be concordant to the unknot.
	
	\begin{figure}
		\begin{center}
			\begin{tikzpicture}
				\node[inner sep=0] at (0,0) {\includegraphics[width=10cm]{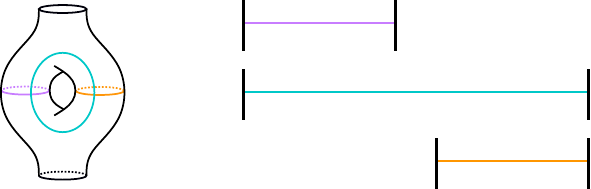}};
				\node at (-5.3, 0){$\alpha$};
				\node at (-4, -0.9){$\beta$};
				\node at (-2.55, 0){$\gamma$};
				\node at (2, 1.2){$\alpha$};
				\node at (5.3, 0){$\beta$};
				\node at (5.3, -1.2){$\gamma$};
			\end{tikzpicture}
			\caption{On the left, the fiber of the Weinstein Lefschetz fibration of a filling of the double cover of the $m(10_{140})$ knot and a basis of Lagrangian spheres $\alpha$, $\beta$, and $\gamma$. On the right, the Legendrian lifts of these spheres to a Weinstein diagram.}
			\label{fig:m10140_fiber}
		\end{center}
	\end{figure}
	
	\begin{figure}
		\begin{center}
			\includegraphics[width=15cm]{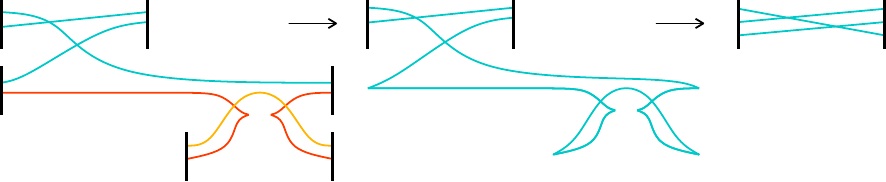}
			\caption{A Weinstein handlebody diagram corresponding to a filling of the double cover of $m(10_{140})$. The Legendrian lifts of $\tau_\beta(\gamma)$, $\gamma$, and $\tau_\alpha^{-3}(\beta)$ are arranged in increasing Reeb height in orange, red, and turquoise respectively. We then perform a series of handle slides, handle cancellations, and Reidemeister moves.}
			\label{fig:m10140_2handles}
			\label{fig:m10140_calc}
		\end{center}
	\end{figure}
	
	First we perform a cyclic conjugation to work with the equivalent braid word: $\sigma_2\sigma_3\sigma_2^{-1}\sigma_3\sigma_1^{-3}\sigma_2\sigma_1^3$. Then an open book decomposition of $\Sigma_2(\Lambda')$ has pages consisting of the torus with two boundary components, and monodromy given by Dehn twists about the curves $\tau_\beta(\gamma)$, $\gamma$, and then $\tau_\alpha^{-3}(\beta)$, where $\alpha$, $\beta$, and $\gamma$ are the curves in on the fiber shown in Figure \ref{fig:m10140_fiber}. We attach handles along these curves to obtain a Weinstein Lefschetz fibration of a filling of $\Sigma_2(\Lambda')$. The Legendrian lifts $\alpha$, $\beta$, and $\gamma$ are also pictured in Figure \ref{fig:m10140_fiber}. The Legendrian lifts of $\tau_\beta(\gamma)$, $\gamma$, and then $\tau_\alpha^{-3}(\beta)$ given by Proposition \ref{prop:CM} are arranged with their relative Reeb heights given by their order in the monodromy in Figure \ref{fig:m10140_calc}. This is a Weinstein handlebody diagram corresponding to the filling. We simplify the diagram to obtain a familiar Weinstein diagram, seen previously in Example \ref{eg:8_20diagram}. 	
\end{eg}

\section{The Legendrian contact homology DGA}\label{sec:DGA}

In this section, we compute the Legendrian contact homology DGA of the knots in the diagrams $\mathcal{D}(\Lambda)$ of the previous section, drawn in $S^1\times S^2$. Let $L$ be the knot depicted in the diagram $\mathcal{D}(\Lambda)$. We'll denote its Legendrian contact homology DGA as $(\mathscr{A}_L,\partial_L)$.

For details on how to compute the Legendrian contact homology DGA for Legendrian knots, see \cite{EtnNg18}. In our situation, since $L$ winds about a 1-handle, we follow the process in \cite{EkhNg15} and the simplification in \cite{EtgLek19}. We now briefly summarize this procedure. We begin by taking the Lagrangian resolution of $L$ following \cite{Ng03} where the Lagrangian projection of $L$ is depicted with additional 1-handle half twist from Definition 2.3 of \cite{EkhNg15} (See for instance, Figure \ref{fig:8_20LagRes}). The DGA is the associative, noncommutative, unital algebra over the base ring $\ZZ[t, t^{-1}]e_1$ where $e_1$ is an idempotent element, generated by Reeb chords of the Legendrian knot $\{a_k,c^0_{i,j}, c^1_{i,j}\}$. The $a_k$ generators correspond to crossings of the knot (called the external generators) and the $c^0_{i,j}, c^1_{i,j}$ generators correspond to a subset of the Reeb chords in the 1-handle (called the internal generators). The differentials of the external generators come from a count of certain immersed disks in $L$ with corners at Reeb chords of the crossings or the 1-handle. The differentials of the internal generators are 
\begin{align*}
	\partial(c^0_{i\;j}) &= \sum_{m=1}^n\sigma_i\sigma_m c^0_{i\;m} c^0_{m\;j}\\
	\partial(c^1_{i\;j}) &= \delta_{i\;j}+\sum_{m=1}^n\sigma_i\sigma_m c^0_{i\;m} c^1_{m\;j}+\sum_{m=1}^n\sigma_i\sigma_m c^1_{i\;m} c^0_{m\;j}\\
\end{align*}
where $\delta_{i\;j} = e_1$ if $i=j$ and 0 otherwise. We extend $\partial_L$ by the signed Leibniz rule. The grading comes from a chosen Maslov potential: a kind of locally constant map on the strands of $L$ that changes by 1 when traversing a cusp in the front diagram.

\subsection{Computing the DGA}\label{subsec:computeDGA}

We now explicitly compute $(\mathscr{A}_L,\partial_L)$. We label the crossings in the Lagrangian resolution of $L$ that were present in the front projection (ie. the crossings on the left side of the diagram) with $a_i$, with the left most crossing labelled $a$. We label crossings which come from the 1-handle twist of the Lagrangian resolution with $b_i$, so that the indices increase from left to right starting with the bottom row and moving upwards. We place a marked point at the minimum point of the top strand exiting the 1-handle on the left, and we label the marked point $t$. See the partially labelled Figure \ref{fig:GenDGA} and the fully labelled examples of the Lagrangian resolution of $\mathcal{D}(\Lambda)$, where $\Lambda$ is the $m(8_{20})$ knot, in Figure \ref{fig:8_20DGA}, and where $\Lambda$ is the $10_{155}$ knot, in Figure \ref{fig:10155lag}.

\begin{figure}
	\begin{center}
		\begin{tikzpicture}
			\node[inner sep=0] at (0,0) {\includegraphics[width=14 cm]{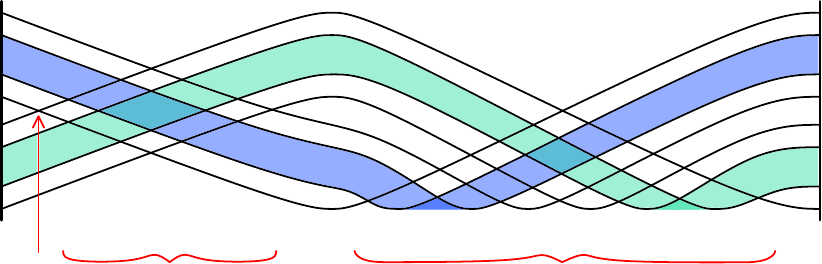}};
			\node at (-7.3,2){1};
			\node at (-7.3,1.5){\vdots};
			\node at (-7.3,0.6){$p$};
			\node at (-7.3,-0.5){\vdots};
			\node at (-7.3,-1.4){$q$};
			\node at (-6.3,-2.3){$a$};
			\node at (-4,-2.5){$a_i$};
			\node at (2.6,-2.5){$b_j$};
			\node at (-0.6,-1.5){$b_1$};
			\node at (0.4,-1.5){$\dots$};
			\node at (4,-1.5){$\dots$};
			\node at (2.5,-1.5){$b_p$};
			\node at (6,-1.5){$b_{q-1}$};
			\node at (2,-1.7){$t$};
			\node at (2,-1.35){$\bullet$};
		\end{tikzpicture}
		\caption[The Lagrangian projection of $\mathcal{D}(\Lambda)$ labelled for a DGA computation.]{$\mathcal{D}(\Lambda)$. There are $p-2$ parallel curves in the blue band, and $q-p-2$ curves in the green band. The $a$ and $b_i$ generators of the DGA are labelled. The other crossing are labelled $a_i$ if they are in the region on the left, and $b_j$ if they are in the region on the right.}
		\label{fig:GenDGA}
	\end{center}
\end{figure}

We can now compute the Legendrian contact homology DGA $(\mathscr{A}_L,\partial_L)$. Since there are no cusps, we assign each strand the Maslov potential 0. The generators are $a, a_1,\dots, a_k, b_1,\dots, b_l$, $c^0_{i\;j}$ for $1\leq i<j\leq q$, and $c^1_{i\;j}$ for $1\leq i,j\leq q$. The gradings of the generators are as follows:
\begin{align*}
	&|t|=|a|=|a_i|=|b_i|=0 && |c^0_{i\;j}|= -1 && |c^1_{i\;j}|= 1
\end{align*}

The differentials of generators of the internal DGA are:
\begin{align*}
	\partial(c^0_{i\;j}) &= \sum_{m=1}^qc^0_{i\;m} c^0_{m\;j}\\
	\partial(c^1_{i\;j}) &= \delta_{i\;j}+\sum_{m=1}^qc^0_{i\;m} c^1_{m\;j}+\sum_{m=1}^q c^1_{i\;m} c^0_{m\;j}
\end{align*}
where $\delta_{i\;j}=e_1$ if $i=j$ and is $0$ otherwise.

The differentials of the external DGA are given by the count of immersed disks. Notably, $\partial a$, from the leftmost crossing of Figure \ref{fig:GenDGA}, has one contributing disk to its left labelled $c^0_{p\;p+1}$ in the differential. Each of the differentials $\partial b_i$ for $i=\{1, \dots,q-1\}$ count two disks, one to the right of $b_i$ and one to the left, except $\partial b_p = c^0_{q-p\;q-p+1}$ which only counts one disk to the right. For such $i$, the marked point $t$ appears in the differential of a generator only when $p>1$ and then only once, in $\partial b_{p-1}$. Explicitly, the differential for $a$ and the $b_i$'s are as follows:
\begin{align*}
	\partial a &= c^0_{p\;p+1} \tag{\textasteriskcentered}\\
	\partial b_1 &= c^0_{q-1\;q}+c^0_{p-1\;p}\\
	\partial b_2 &= c^0_{q-2\;q-1}+ c^0_{p-2\;p-1}\\
	\dots& \\
	\partial b_{p-1}&= tc^0_{q-p+1\;q-p+2}+c^0_{1\;2}\\
	\partial b_p &= c^0_{q-p\;q-p+1}\\
	\partial b_{p+1} &= c^0_{q-p-1\;q-p} +c^0_{q-1\;q}\\
	\dots& \\
	\partial b_{q-1} &= c^0_{1\;2}+c^0_{p+1\;p+2}
\end{align*}
For $a_i$ and $b_j$ where $j > q-1$, the differential counts at least $2$ disks, including at least one which contributes a term of the form $\mu c^0_{i'\;j'}$ or $c^0_{i'\;j'}\mu$ where $\mu$ is $a$, or some $a_i$ or $b_j$.
We extend $\partial_L$ by the signed Leibniz rule. See Example \ref{eg:8_20DGA} and Example \ref{eg:10_155} for a full computation of the differential on the external generators for some particular $\mathcal{D}(\Lambda)$.

\begin{lemma}\label{lemma:nosolos}
Let $K$ be the Legendrian knot of a diagram $\mathcal{D}(\Lambda)$. Let $q$ be the number of strands passing through the 1-handle. Then the terms $a, b_1,\dots,b_{q-1}$ do not appear in any differentials of the Legendrian contact homology DGA $\mathscr{A}_K/\ZZ \langle e_1\rangle$ as degree 1 monomials.
\end{lemma}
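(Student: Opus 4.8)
The plan is to reduce the statement to a grading computation together with the explicit internal differential already recorded above. First I would note the relevant degrees: $t$, $t^{-1}$, $a$, the $a_i$ and the $b_j$ all have degree $0$, the internal generators $c^0_{ij}$ have degree $-1$, and the internal generators $c^1_{ij}$ have degree $+1$. Since $\partial$ is homogeneous of degree $-1$, a length-one monomial $w$ can occur in $\partial x$ only if $|x|=|w|+1$. Each of $a,b_1,\dots,b_{q-1}$ has degree $0$, so as a length-one monomial it can appear only in the differential of a generator of degree $1$.

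The second step is to identify the degree-$1$ generators. Reading off the gradings, the only generators of degree $1$ are the internal generators $c^1_{ij}$: every external generator together with $t^{\pm 1}$ has degree $0$, and the $c^0_{ij}$ have degree $-1$. Hence it suffices to show that none of $a,b_1,\dots,b_{q-1}$ occurs as a length-one monomial in $\partial(c^1_{ij})$ for any $i,j$.

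For this I would appeal to the fact that the internal DGA is a genuine sub-DGA in the Ekholm--Ng construction \cite{EkhNg15}, so that $\partial(c^1_{ij})$ is computed by the internal formula
$$\partial(c^1_{ij}) = \delta_{ij} + \sum_{m=1}^q c^0_{im}c^1_{mj} + \sum_{m=1}^q c^1_{im}c^0_{mj}.$$
Passing to the quotient $\mathscr{A}_K/\ZZ\langle e_1\rangle$ annihilates the idempotent term $\delta_{ij}=e_1$, and every remaining term is a length-two product of internal generators. In particular no external generator appears at all, let alone one of $a,b_1,\dots,b_{q-1}$ as a length-one monomial. Combined with the grading reduction above, this gives the claim.

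The hard part is the assertion that $\partial(c^1_{ij})$ has no external contributions, and it is the one step I would treat carefully rather than by grading alone. A naive index count does not forbid a rigid disk with positive puncture at $c^1_{ij}$ and a single negative puncture at $a$ or some $b_i$, since $|c^1_{ij}|-|a|-1=0$ has the correct dimension. The point is instead structural: in the Ekholm--Ng normal form the holomorphic disks with a positive puncture at an internal Reeb chord stay localized near the $1$-handle and contribute exactly the internal terms displayed above, so no disk with a negative corner at an external crossing can occur. Once this sub-DGA property is in hand, the rest is the elementary grading bookkeeping of the first two paragraphs.
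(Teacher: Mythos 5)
Your proof is essentially correct but takes a genuinely different route from the paper's. The paper argues geometrically: any holomorphic disk with a negative corner at $a$ or at some $b_i$ has its boundary run directly into the $1$-handle along either strand of that crossing, so it must carry an additional negative corner at some $c^0_{i\,j}$; consequently every word in a differential of a generator that contains one of these letters also contains an adjacent internal $c^0$ factor. Your argument is instead algebraic: since the diagram has no cusps, all external generators have degree $0$, the only degree-$1$ generators are the $c^1_{i\,j}$, and their differentials are purely internal by the very definition of the Ekholm--Ng/Etg\"u--Lekili DGA. Note that the ``hard part'' you flag is not actually an issue in the framework the paper uses: the internal differential is given by an explicit algebraic formula as part of the combinatorial definition (stated in the paper's Section 8), so no localization-of-disks argument is required. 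Your route is shorter and yields a stronger conclusion (no external generator whatsoever appears as a linear term), but it is tied to the specific grading situation here (Maslov potentials all zero, rotation number zero); the paper's disk-boundary argument never invokes the grading and additionally pins down the precise form $\mu_1\, a\, c^0_{i\,p}\,\mu_2$, etc., of the words that do occur.

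There is one step you need to patch. Grading alone does not justify your reduction from ``appears in any differential'' to ``appears in the differential of a generator'': a length-one monomial of degree $0$ could a priori also arise, via the Leibniz rule, from the differential of a length-two word of degree $1$ --- for instance $\partial(c^1_{i\,i}\,a)$ contains $e_1\cdot a = a$ in the unquotiented algebra, with all gradings consistent. What rules this out is precisely the quotient by $\ZZ\langle e_1\rangle$: in $\mathscr{A}_K/\ZZ\langle e_1\rangle$ no differential of a generator has a constant term, so by the Leibniz rule every term of $\partial(x_1\cdots x_k)$ has word length at least $k$, and length-one monomials in the image can only come from differentials of single generators. This is exactly the closing observation in the paper's proof, and it is genuinely needed for the lemma's application, since the image of $d_{LHO^+}$ contains differentials of products and not just of generators. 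With that sentence added, your proof is complete.
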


\begin{proof}
Suppose $a$ appears in the differential of some generator $\mu$. Then there is an immersed disk with a negative corner at $a$. The boundary of this immersed disk must follow one of the strands from $a$ to the left. Suppose it's the overcrossing strand. Then the strand immediately enters the 1-handle, so $a$ appears in the differential in a word of the form $\mu_1 a c^0_{i\; p} \mu_2$ or  $\mu_1 c^0_{i\; p} a \mu_2$, where $i<p$ and $\mu_i$ is some other string of generators, possibly an empty one. Likewise if it's the undercrossing strand, we immediately reach the 1-handle to the left of the crossing, so $a$ appears in the differential in a word of the form $\mu_1 a c^0_{i\; p+1} \mu_2$ or  $\mu_1 c^0_{i\; p+1} a \mu_2$, $i<p+1$. 

We can make an equivalent argument for any of the $b_i$ following their overcrossing and undercrossing strands to the left or right, as we see that we do not meet any negative corners until we reach the 1-handle.

Since we have quotiented out the constant terms (any monomials of the form $e_1$) and the differential on products is generated by the Leibniz rule: the only way to obtain a monomial of smaller degree is if there is a constant term in one of the differentials, but no such term exists. Thus $a,b_1,\dots,b_{q-1}$ cannot appear as a degree 1 monomial in the differential of some product of generators. 
\end{proof}

\begin{eg}\label{eg:8_20DGA}
In this example, we fully compute the internal and external DGA of $\mathcal{D}(\Lambda)$ where $\mathcal{D}(\Lambda)$ is the Weinstein diagram from Example \ref{eg:8_20diagram} and Example \ref{eg:m10_140}. The Lagrangian resolution of $\mathcal{D}(\Lambda)$ is the diagram on the right of Figure \ref{fig:8_20LagRes}. We can then label the crossings which generate the DGA as in Figure \ref{fig:8_20DGA}. The generators are $t, a, a_1, b_1, b_2, b_3$, $c^0_{i\;j}$ for $1\leq i<j\leq 3$, and $c^1_{i\;j}$ for $1\leq i,j\leq 3$. 

\begin{figure}
	\begin{center}
		\includegraphics[width=12 cm]{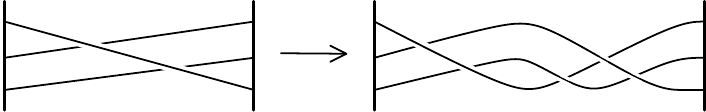};
		\caption[The Lagrangian resolution for $\mathcal{D}(m(8_{20}))$.]{The Lagrangian resolution for $\mathcal{D}(\Lambda)$, where $\Lambda$ is the $m(8_{20})$ knot.}
		\label{fig:8_20LagRes}
	\end{center}
\end{figure}

\begin{figure}
	\begin{center}
		\begin{tikzpicture}
			\node[inner sep=0] at (0,0) {\includegraphics[width=6 cm]{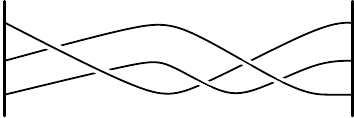}};
			\node at (-2,0.45){$a$};
			\node at (-1.2,0.05){$a_1$};
			\node at (0.5,-0.75){$b_1$};
			\node at (1.8,-0.75){$b_2$};
			\node at (1.1,0.3){$b_3$};
			\node at (3.2,-0.8){$1$};
			\node at (3.2,0){$2$};
			\node at (3.2,0.8){$3$};
			\node at (-3.2,0.8){$1$};
			\node at (-3.2,0){$2$};
			\node at (-3.2,-0.8){$3$};
			\node at (-0.2,-0.61){$\bullet$};
			\node at (-0.2,-1){$t$};
		\end{tikzpicture}
		\caption[$\mathcal{D}(m(8_20))$ labelled for a DGA computation.]{The external Reeb chords generating the DGA of $L$ in $\mathcal{D}(m(8_20))$ labelled.}
		\label{fig:8_20DGA}
	\end{center}
\end{figure}

Then the gradings of the generators are:
\begin{align*}
	&|t|=|a|=|a_1|=|b_1|=|b_2|=|b_3|=0, &&|c^0_{i\;j}|= -1, && |c^1_{i\;j}|= 1
\end{align*}
We get following differentials for the external generators:
\begin{align*}
	\partial a &= c^0_{1\;2}\\
	\partial a_1  &= c^0_{1\;3}+ ac^0_{2\;3}\\
	\partial b_1  &= c^0_{2\;3}\\
	\partial b_2  &= c^0_{1\;2}+c^0_{2\;3}\\
	\partial b_3  &= c^0_{1\;3}+b_2c^0_{2\;3}+ c^0_{2\;3}b_1,
\end{align*}
along with differentials for the internal generators as follows:
\begin{align*}
	\partial c^0_{1\;2} &= 0\\
	\partial c^0_{1\;3} &= c^0_{1\;2}c^0_{2\;3}\\
	\partial c^0_{2\;3} &= 0\\
	\partial c^1_{1\;1} &=e_1+c^0_{1\;2}c^1_{2\;1}+c^0_{1\;3}c^1_{3\;1}\\
	\partial c^1_{1\;2} &=c^0_{1\;2}c^1_{2\;2}+c^0_{1\;3}c^1_{3\;2}+c^1_{1\;1}c^0_{1\;2}\\
	\partial c^1_{1\;3} &=c^0_{1\;2}c^1_{2\;3}+c^0_{1\;3}c^1_{3\;3}+c^1_{1\;1}c^0_{1\;3}+c^1_{1\;2}c^0_{2\;3}\\
	\partial c^1_{2\;1} &= c^0_{2\;3}\\
	\partial c^1_{2\;2} &=e_1+c^0_{2\;3}c^1_{3\;2}+c^1_{2\;1}c^0_{1\;2}\\
	\partial c^1_{2\;3} &=c^0_{2\;3}\\
	\partial c^1_{3\;1} &=0\\
	\partial c^1_{3\;2} &=c^1_{3\;1}c^0_{1\;2}\\
	\partial c^1_{3\;3} &=e_1+c^1_{3\;1}c^0_{1\;3}+c^1_{3\;2}c^0_{2\;3}.
\end{align*}
\end{eg}

\section{Nonvanishing symplectic homology}\label{sec:nonvanishing}

In this section, we use the Legendrian contact homology DGA to understand symplectic homology of the Weinstein manifold given by handle attachments according to $\mathcal{D}(\Lambda)$.

Symplectic homology and cohomology are very useful invariants of exact symplectic manifolds with contact type boundaries, introduced by Viterbo in \cite{Vit99}. Symplectic homology can be used to prove the existence of closed Hamiltonian orbits and Reeb chords, and the wrapped Fukaya category \cite{FukSei08} is built using the wrapped Floer cohomology, which are modules over the symplectic cohomology ring for non-compact symplectic manifolds. See \cite{Sei06} for a survey on symplectic homology. Work of Bourgeois and Oancea first related symplectic homology to linearized contact homology \cite{BouOan09} and a way to compute symplectic homology via the Legendrian contact homology DGA was established by Bourgeois, Ekholm, and Eliashberg in \cite{BouEkh12,Ekh19}. In this section, we summarize and apply the results of \cite{BouEkh12}. We begin with Corollary 5.7 of \cite{BouEkh12} which states:
\begin{thm}\cite{BouEkh12}\label{thm:BEE}
$$S\mathbb{H}(X) = L\mathbb{H}^{Ho}(L)$$
where $L\mathbb{H}^{Ho}(L)$ is the homology of the Hochschild complex associated to the Legendrian contact homology differential graded algebra of $L$ over $\QQ$.
\end{thm}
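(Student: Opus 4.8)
The plan is to realize this statement as an instance of the Legendrian surgery formula of Bourgeois, Ekholm, and Eliashberg \cite{BouEkh12,Ekh19}, which computes the symplectic homology of a Weinstein domain assembled by critical handle attachment in terms of the Hochschild homology of the Legendrian contact homology DGA of the attaching link. First I would fix the geometric model: write $X$ as the result of attaching critical Weinstein $2$-handles along the Legendrian link $L \subset \partial X_0$, where $X_0$ is the subcritical piece (here a boundary connected sum of copies of $S^1\times D^3$, with contact boundary $\#^m(S^1\times S^2)$) whose symplectic homology vanishes by Cieliebak's subcritical theorem. In this way all of $S\mathbb{H}(X)$ is carried by the handle attachment along $L$, and the $L$ appearing in the statement is exactly the Legendrian of the diagram $\mathcal{D}(\Lambda)$ whose DGA $(\mathscr{A}_L,\partial_L)$ was computed in Section \ref{sec:DGA}.

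Next I would pass to a Floer/SFT model for $S\mathbb{H}(X)$ whose generators are closed Reeb orbits of an adapted contact form on $\partial X$ (equivalently, $1$-periodic orbits of a Hamiltonian linear at infinity). The key dynamical observation is that, after surgery, the closed Reeb orbits passing through the handles are in bijection with \emph{cyclic} words of Reeb chords of $L$: each traversal of a handle records one Reeb chord, and closing the orbit up amounts to cyclically composing these chords. This matches precisely the generators of the Hochschild complex of $(\mathscr{A}_L,\partial_L)$, with the marked point $t$ and the base ring $\ZZ e_1$ (in the Etg\"u--Lekili normalization) accounting for the cyclic, rather than linear, composability of the chords.

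The analytic heart is to identify the two differentials. I would run a neck-stretching argument along the separating hypersurface $\partial X_0$ between the handle region and the subcritical piece. By SFT compactness the holomorphic cylinders counted by the $S\mathbb{H}$ differential degenerate into broken configurations whose essential pieces are rigid holomorphic disks in the symplectization of $\partial X_0$ with boundary on $L$ --- exactly the disks counted by $\partial_L$ --- glued along the handles. Reassembling these disks around a cyclic word reproduces the Hochschild differential term by term: disks internal to a single chord give the DGA differential on that factor, disks straddling two consecutive chords give the bar/connecting terms, and the disk that bridges the last chord back to the first gives the cyclic Hochschild term. A gluing theorem in the reverse direction, together with transversality, shows every such broken configuration is realized, yielding a grading- and sign-preserving bijection of rigid moduli spaces (the $\ZZ$-grading matching via the Conley--Zehnder/Maslov correspondence). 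This produces a chain isomorphism between the complex computing $S\mathbb{H}(X)$ and the Hochschild complex, and hence the asserted isomorphism on homology after tensoring with $\QQ$.

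I expect the main obstacle to be exactly this analytic identification: establishing transversality for the relevant moduli of punctured disks and cylinders and the two-way neck-stretching/gluing correspondence with correct orientations, so that the signed count of $S\mathbb{H}$-cylinders equals the Hochschild count of LCH disks. A secondary subtlety is the bookkeeping --- matching the cyclic-composability conventions enforced by the idempotent $e_1$ and the internal generators $c^k_{i\,j}$ with the Hochschild differential, and verifying that the subcritical orbits contribute only the expected unit class so that no extraneous generators survive.
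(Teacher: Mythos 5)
The paper does not prove this statement at all: it is imported verbatim as Corollary 5.7 of \cite{BouEkh12}, and the only ``proof'' in the paper is the citation itself (supplemented later by \cite{Ekh19} for the surrounding machinery). So there is no in-paper argument to compare yours against; the relevant comparison is with the proof in the cited source. On that score, your outline faithfully reproduces the architecture of the Bourgeois--Ekholm--Eliashberg argument: present $X$ as a subcritical domain $X_0$ with critical $2$-handles attached along $L$, use Cieliebak's vanishing theorem to see that the subcritical part contributes nothing, observe that closed Reeb orbits of the surgered contact boundary correspond to cyclic words of composable Reeb chords of $L$ (with the idempotents $e_1$ and the internal generators $c^k_{i\,j}$ encoding composability through the handle), and identify the Floer differential with the Hochschild differential by stretching the neck along $\partial X_0$ and gluing the resulting LCH disks.

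The gap is the one you name yourself, but it should be stated more starkly: everything after ``the analytic heart'' is not a step you can take, it is the entire content of a very long and technical paper. The orbit--to--cyclic-word correspondence is not an ``observation''; it requires choosing a contact form adapted to the handle so that chords are short and every orbit excursion through the handle region is localized and nondegenerate, which is a genuine dynamical lemma in \cite{BouEkh12}. Likewise the two-way SFT compactness/gluing correspondence with coherent orientations --- the step that makes the signed cylinder count equal the Hochschild count --- is precisely where the foundational difficulty of that paper lies, and parts of it were only settled in later work such as \cite{Ekh19}. As a blind proof of the statement, your text is therefore a correct roadmap of the known proof rather than a proof; within the present paper, the author's choice to quote the result as a black box is the appropriate one, and any self-contained argument at the level of detail you give would not be accepted as establishing the theorem.
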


Thus, in order to compute the symplectic homology $S\mathbb{H}(X)$, we first compute 
$$LH^{Ho+}(L)\defeq \widecheck{LHO}^+(L)\oplus \widehat{LHO}^+(L),$$ 
defined as follows. We consider the DGA $\mathscr{A}_L$ defined in the previous section and generated by cyclically composable monomials of Reeb chords. Let $LHO(L)=\mathscr{A}_L$. Let 
$$LHO^+(L)\defeq LHO(L)/\langle e_1\rangle$$
be the subalgebra of $LHO(L)$ generated by non-trivial cyclically composable monomials of Reeb chords. Let 
$$\widecheck{LHO}^+(L)\defeq LHO^+(L)$$
and let 
$$\widehat{LHO}^+(L)\defeq LHO^+(L)[1],$$
that is $LHO^+(L)$ with grading shifted up by 1. Now, given a monomial $w=c_1\dots c_l\in LHO^+(L)$, we denote the corresponding elements in $\widecheck{LHO}^+(L)$ and $\widehat{LHO}^+(L)$ as $\check{w}=\check{c_1}\dots c_l$ and $\hat{w}=\hat{c_1}\dots c_l$, respectively. The hat or check may mark a variable in the monomial which is not the first one, in which case the monomial is the word obtained by the graded cyclic permutation which puts the marked letter in the
first position. Let $S:LHO^+\rightarrow \widehat{LHO}^+$ denote the linear operator defined by the formula:
$$S (c_1\dots c_l)\defeq \hat{c_1}c_2\dots c_l + (-1)^{|c_1|}c_1\hat{c_2}\dots c_l + \dots + (-1)^{|c_1 \dots c_l|}c_1 c_2\dots \hat{c_l}.$$
Then the differential $d_{Ho+}:LH^{Ho+}\rightarrow LH^{Ho+}$ is given by 
$$d_{Ho+} = \begin{pmatrix}
\check {d}_{LHO^+} & d_{M\;Ho+}\\
		0		 & \hat{d}_{LHO^+}
\end{pmatrix}.$$
The maps in the matrix on generators are as follows:
\begin{enumerate}
	\item If $w\in LHO^+(L)$ is a monomial, then 
	$$\check{d}_{LHO^+}(\check{w})\defeq \sum_{j=1}^r \check{v}_j,$$
	where $d_{LHO^+}(w)\defeq \partial_{\mathscr{A}_L} (w) =\sum_{j=1}^r v_j$ for monomials $v_j$.
	\item If $c$ is a chord and $w$ is a monomial such that $cw\in LHO^+(L)$, then
	$$\hat{d}_{LHO^+}(\hat{c} w) = S(d_{LHO^+}(c))w +(-1)^{|c|+1}\hat{c}(d_{LHO^+}(w)).$$
	\item If $w=c_1\dots c_l\in LHO^+(\Lambda)$, then 
	$$d_{M\;Ho+}(\hat{w}) \defeq \check{c_1}\dots c_l - c_1 \dots \check{c_l}.$$ 
\end{enumerate}

\begin{remark}\label{remark:dho}
Note that $d_{M\;Ho+}$ is zero on linear monomials.
\end{remark}

Now we can define $LH^{Ho}(L)\defeq LH^{Ho+}(L)\oplus C(L)$ where $C(L)$ is the vector space generated by a single element $\tau_1$ of grading 0. Then the differential $d_{Ho}:LH^{Ho}(L)\rightarrow LH^{Ho}(L)$ is defined as:
$$d_{Ho} = \begin{pmatrix}
d_{Ho+} & 0\\
\delta_{Ho} & 0
\end{pmatrix}.$$
For any chord $c$, we define $\delta_{Ho}(\check{c}) \defeq n_c\tau_1$ where $n_c$ is the count of the zero-dimensional moduli space of holomorphic disks asymptotic to $\infty$ at $c$. Then we have the following:
\begin{prop}\cite{BouEkh12}
$d_{Ho}^2 = 0$ and the homology 
$$L\mathbb{H}^{Ho}(L) = H_*(LH^{Ho}(L), d_{Ho})$$ is independent of choices and is a Legendrian isotopy invariant of $L$.
\end{prop}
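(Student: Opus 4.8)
The plan is to split the statement into its two assertions. That $d_{Ho}$ is a genuine differential ($d_{Ho}^2=0$) is essentially an algebraic identity resting on $\partial_{\mathscr{A}_L}^2=0$ together with a single geometric input, while the invariance of the homology is inherited from invariance of the underlying Chekanov--Eliashberg DGA. First I would unwind the block structure. Writing
$$d_{Ho}=\begin{pmatrix} d_{Ho+} & 0 \\ \delta_{Ho} & 0\end{pmatrix},\qquad d_{Ho+}=\begin{pmatrix}\check d_{LHO^+} & d_{M\;Ho+} \\ 0 & \hat d_{LHO^+}\end{pmatrix},$$
the equation $d_{Ho}^2=0$ becomes the four identities $\check d_{LHO^+}^2=0$, $\hat d_{LHO^+}^2=0$, the mixed relation $\check d_{LHO^+}\circ d_{M\;Ho+}+d_{M\;Ho+}\circ\hat d_{LHO^+}=0$, and $\delta_{Ho}\circ d_{Ho+}=0$.

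The first identity is immediate: since $\partial_{\mathscr{A}_L} e_1=0$, the span $\ZZ\langle e_1\rangle$ is a subcomplex, so $\check d_{LHO^+}$ is the induced differential on the quotient $LHO^+$ and squares to zero because $\partial_{\mathscr{A}_L}^2=0$. The second and third identities form the algebraic heart, and I would verify them by a careful sign computation organized around the operator $S$. The key is that $S$ is a cyclic symmetrization that intertwines the differential $d_{LHO^+}$ on $\widecheck{LHO}^+$ with $\hat d_{LHO^+}$ on $\widehat{LHO}^+$ up to the prescribed sign twist; granting this, $\hat d_{LHO^+}^2$ collapses to $S$ applied to $d_{LHO^+}^2=0$ together with graded Leibniz terms on the tail that cancel in pairs. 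The mixed relation then follows because $d_{M\;Ho+}(\hat w)=\check c_1\cdots c_l-c_1\cdots\check c_l$ is a telescoping difference whose terms are exactly those produced by $\check d_{LHO^+}$ and $\hat d_{LHO^+}$ under cyclic permutation.

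The fourth identity is where geometry enters. Since $\delta_{Ho}(\check c)=n_c\tau_1$ counts rigid holomorphic disks with a single positive puncture at $c$ and no negative punctures, $\delta_{Ho}\circ d_{Ho+}=0$ is the algebraic shadow of the vanishing of the signed boundary count of the compactified one-dimensional moduli space of such disks: its boundary strata are precisely a rigid disk contributing to $\partial c$ glued to a second disk counted by $\delta_{Ho}$, and compactness and gluing from \cite{BouEkh12} force the total to vanish. Here the $d_{M\;Ho+}$ contribution drops out, since that map produces no linear check-monomial (it vanishes on linear inputs by Remark~\ref{remark:dho} and preserves word length otherwise), so only $\delta_{Ho}\circ\check d_{LHO^+}$ survives. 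For invariance, I would use that the construction $L\mapsto LH^{Ho}(L)$ is functorial in the DGA: a Legendrian isotopy of $L$ induces a stable tame isomorphism of the Chekanov--Eliashberg DGA, valid in the $\#^m(S^1\times S^2)$ setting by \cite{EkhNg15}, and the Hochschild-type functor carries such equivalences to isomorphisms on $L\mathbb{H}^{Ho}$; the same argument absorbs changes of the base point $*$ and of the auxiliary perturbation data, yielding independence of all choices.

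I expect the main obstacle to be the simultaneous verification of $\hat d_{LHO^+}^2=0$ and the mixed chain-map relation, where the cyclic operator $S$ inserts signs at every letter and one must check that the terms from $S\circ d_{LHO^+}$ on the marked variable cancel exactly against the graded Leibniz terms on the tail, with the $d_{M\;Ho+}$ contributions telescoping correctly. A clean way to tame this is to present $\widecheck{LHO}^+\oplus\widehat{LHO}^+$ as the mapping cone of $d_{M\;Ho+}$ and to recognize $d_{Ho+}$ as a Hochschild-type differential of $(\mathscr{A}_L,\partial)$; then the three algebraic identities reduce to the single statement that $S$ is a derivation-compatible cyclic symmetrization, and only $\delta_{Ho}\circ d_{Ho+}=0$ requires genuinely geometric input.
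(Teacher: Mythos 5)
A preliminary remark on the comparison itself: the paper contains no proof of this proposition --- it is stated as a citation of \cite{BouEkh12} --- so the only meaningful benchmark is the Bourgeois--Ekholm--Eliashberg argument. Your overall architecture does match theirs: the block decomposition of $d_{Ho}^2=0$ into the four identities $\check d_{LHO^+}^2=0$, $\hat d_{LHO^+}^2=0$, the mixed relation $\check d_{LHO^+}\circ d_{M\;Ho+}+d_{M\;Ho+}\circ\hat d_{LHO^+}=0$, and $\delta_{Ho}\circ d_{Ho+}=0$ is the right bookkeeping; the treatment of the first identity via $\partial_{\mathscr{A}_L}e_1=0$ is correct; and isolating the last identity as the only one requiring genuinely geometric input is the correct division of labor.

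There is, however, a genuine gap in that geometric step. The boundary of the compactified one-dimensional moduli space of filling disks with positive puncture at $c$ does not consist of ``a rigid disk contributing to $\partial c$ glued to a second disk counted by $\delta_{Ho}$'': by SFT compactness it consists of two-level buildings in which a symplectization disk contributing a monomial $v_j=b_{j,1}\cdots b_{j,k_j}$ to $\partial c$ is glued to a rigid filling disk at \emph{every} one of its negative punctures simultaneously. The resulting signed count of boundary points is $\sum_j \prod_i n_{b_{j,i}}=0$, summed over all monomials of $\partial c$. For this to read as $\delta_{Ho}\circ\check d_{LHO^+}=0$, the map $\delta_{Ho}$ must be extended to words multiplicatively, $\delta_{Ho}(\check b_1\cdots b_k)=n_{b_1}\cdots n_{b_k}\,\tau_1$. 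Under your convention --- $\delta_{Ho}$ supported on linear monomials only, which is also what your claim that ``only $\delta_{Ho}\circ\check d_{LHO^+}$ survives'' relies on --- the identity is simply false in general: if $\partial c=b_1b_2+b_3$ with all counts nonzero, compactness gives $n_{b_1}n_{b_2}+n_{b_3}=0$, whereas your computation yields $\delta_{Ho}(\check d_{LHO^+}(\check c))=n_{b_3}\tau_1\neq 0$. (With the multiplicative extension, your claim that $\delta_{Ho}\circ d_{M\;Ho+}=0$ still holds, but for a different reason: the two terms of $d_{M\;Ho+}$ cap off to the same product of counts and cancel.) A second, smaller gap concerns invariance: this is not pure functoriality. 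A Legendrian isotopy induces a stable tame isomorphism of the DGA by \cite{EkhNg15}, but the complex $LH^{Ho}(L)$ also depends on the counts $n_c$, which are not part of the DGA data; their behavior under isotopy, under change of perturbation data, and under stabilization (which changes the generating set) is precisely the geometric content of \cite{BouEkh12} and must be invoked as such rather than deduced from stable tame invariance alone.
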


We apply Theorem \ref{thm:BEE} to the diagrams $\mathcal{D}(\Lambda)$ obtained in Section \ref{sec:drawing_diagrams} to obtain the following theorem:

\newtheorem*{thm:nonvanishingSH}{Theorem \ref{thm:nonvanishingSH}}
\begin{thm:nonvanishingSH}
	Let $\Lambda\neq U$ be a Legendrian knot which is smoothly the closure of a quasipositive 3-braid of algebraic length 2. Let $\Lambda'$ be a positive transverse push off of $\Lambda$. Then there is a filling of $\Sigma_2(\Lambda')$, the double cover of $S^3$ branched over $\Lambda'$, which has nonvanishing symplectic homology.
\end{thm:nonvanishingSH}

\begin{proof}
Let $X_\Lambda$ be the filling of $\Sigma_2(\Lambda)$ given by the Weinstein handle decomposition depicted in $\mathcal{D}(\Lambda)$. We will show that 
$$S\mathbb{H}(X_\Lambda) = L\mathbb{H}^{Ho}(L)$$ 
is nonzero by finding a $(\check{w}, \hat{v}, a_i\tau_i)\in \widecheck{LHO}^+(L)\oplus \widehat{LHO}^+(L)\oplus C(L) = LH^{Ho}(\Lambda)$ such that $d_{Ho}((\check{w}, \hat{v}, a_i\tau_i)) = 0$ but $(\check{w}, \hat{v}, a_i\tau_i)\notin \text{Im}(d_{Ho})$.

First, we consider the Legendrian contact homology DGA $\mathscr{A}_L=LHO(L)$ of the link $L$ in the Weinstein diagram $\mathcal{D}(\Lambda)$. Recall that $\mathcal{D}(\Lambda)$ is constructed via surgery on a $(p,q)$ and a $(1,0)$ curve in the torus with one boundary component, and consists of the attaching curves of a single 2-handle and a single 1-handle. Since $\Lambda\neq U$, by Lemma \ref{lemma:qgreaterthanp}, we choose $p$ and $q$ satisfing $0<p<q$.

The attaching sphere of the 2-handle winds around the 1-handle $q$ times.  

As in subsection \ref{subsec:computeDGA}, we will write $a_i$ (or $a$) to denote generators coming from crossings present in the front diagram, $b_i$ to denote generators coming from crossings formed by the Lagrangian resolution, and $c^0_{i\;j}, c^1_{i\;j}$ to denote generators coming from the internal Reeb chords within the 1-handle. In particular, consider the generators $a$ and $b_i$, $1\leq i\leq q-1$ as labelled in Figure \ref{fig:GenDGA}. The differentials for these generators are computed explicitly in the previous subsection, see (\textasteriskcentered).

Note that every $c^0_{i\;i+1}$ appears in this collection of differentials twice, with the marked point giving an extra $t$ coefficient only to the term $c^0_{q-p+1\;q-p+2}$ in $\partial b_{p-1}$ when $p>1$.  
Note also that since $(p,q)=1$, for any $i\neq j$, $\partial b_i \neq \partial b_j$.  Let
$$c\defeq ta +\sum_{i=1}^{q-1} \epsilon_i t^{s_i} b_i$$ 
where we choose $s_i \in\{0,1\}$ and $\epsilon_i\in\{-1,1\}$ using the following procedure:
\begin{enumerate}
	\item Let $c_0\defeq a +\sum_{i=1}^{q-1} b_i$. Rearrange the terms in sum $c_0=a +\sum_{i=1}^{q-1} b_i$ in terms of the differentials of the generators so that matching terms are adjacent and relabel the $b_i$ in this order with a new index $b_j'$: 
	\begin{align*}
		\partial(c_0) =& (c^0_{p\;p+1}) + (c^0_{p\;p+1} + c^0_{i_0\;j_0}) + (c^0_{i_0\;j_0}+c^0_{i_1\;j_1}) + \dots\\
		&+ (tc^0_{q-p+1\;q-p+2}+c^0_{1\;2}) + (c^0_{1\;2}+c^0_{p+1\;p+2})+\dots+(c^0_{q-p\;q-p+1})\\
		=&\partial a + \partial b'_{1} + \dots +\partial b'_{q-1}\\
		=&\partial a + \sum_{j=1}^{q-1}\partial b_{j}'
	\end{align*}
	\item for $i \in\{1,\dots q-1\}$, if $b_i=b'_{j}$ and $j<p-1$, let $s_i=1$. Otherwise, let $s_i=0$. If $p=1$, $s_i=0$ for all $i$.
	\item Let $\epsilon_i= (-1)^j$.
\end{enumerate}
Then $\partial_{\mathscr{A}} (c)=0$.

Consider the element $(\check{c},\hat{c},\tau_1)\in LH^{Ho}(L)$. Then $\partial_{\mathscr{A}} (c)=d_{LHO}(c)=0$, so $d_{LHO^+}(c)=0$. Thus we obtain the following:
$$\check{d}_{LHO^+}(\check{c})= 0,$$
$$\hat{d}_{LHO^+}(\check{c}) = 0,$$
$$d_{M\;Ho+}(\hat{c}) = 0$$
by Remark \ref{remark:dho}. Finally, $\delta_{Ho}$ counts the zero-dimensional moduli space of holomorphic disks asymptotic to Reeb chords at $\infty$. These are disks with boundary consisting of a smooth curve along the front diagram that does not pass through a negative corner. Since the curves in $\mathcal{D}(\Lambda)$ pass monotonically left to right, any such boundary would necessarily have a negative corner at the 1-handle. Thus,
$$\delta_{Ho}(c) = 0.$$
Thus we conclude that $d_{Ho} (\check{c},\hat{c},\tau_1) = 0$. 

To see that $(\check{c},\hat{c},\tau_1)\notin \text{Im}(d_{Ho})$, note that in the image of $d_{LHO^+}$ is generated by the differentials of the generators of $\mathscr{A}_L/\langle e_1\rangle$. Thus we can apply Lemma \ref{lemma:nosolos}, and we know that the terms $a, b_1,\dots,b_{q-1}$ do not appear in any differentials of $\mathscr{A}_L/\langle e_1\rangle$ as degree 1 monomials. Thus $c \notin \text{Im}(d_{LHO^+})$.
\end{proof}

\begin{eg}\label{eg:8_20cycle}
Following the above proof, for the $m(8_{20})$ knot with differential algebra given by Example \ref{eg:8_20DGA}, the cycle $c$ is given by $a - b_2+b_1$. 
\end{eg}

\begin{eg}
\label{eg:10_155}

The $10_{155}$ knot \cite{knotatlas} is doubly slice, meaning that it appears as a cross section of an unknotted $S^2$ in $S^4$ \cite{LivMei15}, however from the main theorem, we can conclude that it cannot be Lagrangian doubly slice. In this example, we compute the DGA and find the cycle $c$ explicitly for this example.

The transverse $10_{155}$ is the closure of the braid $\sigma_1\sigma_2^{-2}\sigma_1\sigma_2^{-1}\sigma_1\sigma_2\sigma_1^{-1}\sigma_2^2$ \cite{SnapPy}.
By Corollary \ref{cor:WeinsteinLF}, its double cover has a Weinstein Lefschetz fibration $\pi:X\rightarrow D^2$ with generic fibre $F$ a punctured torus, and vanishing cycles given by $\alpha$ and $\tau_{\beta^{-2}\alpha\beta^{-1}}(\alpha)$ which are curves of slope $(1,0)$ and $(2,5)$ respectively. Following Theorem \ref{thm:diagram}, we obtain the surgery diagram $\mathcal{D}(10_{155})$, as in Figure \ref{fig:10155front}. 

\begin{figure}
	\begin{center}
		\includegraphics[width=5cm]{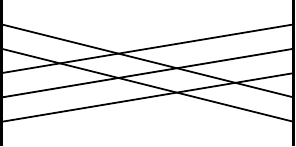}
		\caption{A front diagram of $\mathcal{D}(\Lambda)$ where $\Lambda$ is the knot $10_{155}$.}
		\label{fig:10155front}
	\end{center}
\end{figure}

\begin{figure}
	\begin{center}
		\begin{tikzpicture}
			\node[inner sep=0] at (0,0) {\includegraphics[width=13cm]{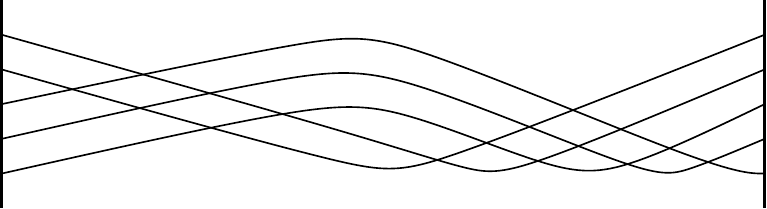}};
			\node at (-5.25,0.45){$a$};
			\node at (-4.1,0.1){$a_1$};
			\node at (-2.9,-0.22){$a_2$};
			\node at (-4.1,0.7){$a_3$};
			\node at (-2.95,0.38){$a_4$};
			\node at (-1.65,0){$a_5$};
			\node at (0.95,-1.2){$b_1$};
			\node at (2.7,-1.2){$b_2$};
			\node at (4.2,-1.25){$b_3$};
			\node at (5.6,-1.22){$b_4$};
			\node at (1.72,-0.45){$b_5$};
			\node at (3.3,-0.5){$b_6$};
			\node at (4.85,-0.55){$b_7$};
			\node at (2.5,-0.15){$b_8$};
			\node at (4,-0.2){$b_9$};
			\node at (3.2,0.15){$b_{10}$};
			\node at (6.8,-1.4){$1$};
			\node at (6.8,-0.7){$2$};
			\node at (6.8,0){$3$};
			\node at (6.8,0.7){$4$};
			\node at (6.8,1.4){$5$};
			\node at (-6.8,1.4){$1$};
			\node at (-6.8,0.7){$2$};
			\node at (-6.8,0){$3$};
			\node at (-6.8,-0.7){$4$};
			\node at (-6.8,-1.4){$5$};
			\node at (1.8,-1.17){$\bullet$};
			\node at (1.8,-1.45){$t$};
		\end{tikzpicture}
		\caption[The labelled Lagrangian resolution of $\mathcal{D}(\Lambda)$ where $\Lambda$ is the knot $10_{155}$.]{The Lagrangian resolution of $\mathcal{D}(\Lambda)$ where $\Lambda$ is the knot $10_{155}$. The Reeb chords generating the external DGA and the strands entering the 1-handles are labelled.}
		\label{fig:10155lag}
	\end{center}
\end{figure}

The Lagrangian resolution of $\mathcal{D}(\Lambda)$ is given by Figure \ref{fig:10155lag}. We label the crossings of this diagram with the conventions described in Section \ref{sec:DGA}. The generators are $t, a, a_1, \dots, a_5, b_1, \dots, b_{10}$, $c^0_{i\;j}$ for $1\leq i<j\leq 5$, and $c^1_{i\;j}$ for $1\leq i,j\leq 5$. The gradings are:
\begin{align*}
	&|t|=|a|=|a_i|=|b_i|=0 && |c^0_{i\;j}|= -1 &&|c^1_{i\;j}|= 1
\end{align*}

The differentials of generators of the internal DGA are:
\begin{align*}
	\partial(c^0_{i\;j}) &= \sum_{m=1}^5c^0_{i\;m} c^0_{m\;j}\\
	\partial(c^1_{i\;j}) &= \delta_{i\;j}+\sum_{m=1}^5 c^0_{i\;m} c^1_{m\;j}+\sum_{m=1}^n c^1_{i\;m} c^0_{m\;j}\\
\end{align*}
where $\delta_{i\;j}=e_i=e_j$ if $i=j$ and $0$ otherwise. The differentials of the generators of the external DGA are:
\begin{align*}
	\partial a   &=c^0_{2\;3}\\
	\partial a_1 &=c^0_{2\;4}+ac^0_{3\;4}\\
	\partial a_2 &=c^0_{2\;5}+ac^0_{3\;5}+a_1c^0_{4\;5}\\
	\partial a_3 &=c^0_{1\;2}a+c^0_{1\;3}\\
	\partial a_4 &=c^0_{1\;2}a_1+c^0_{1\;4}+a_3c^0_{3\;4}\\
	\partial a_5 &=c^0_{1\;2}a_2+c^0_{1\;5}+a_3c^0_{3\;5}+a_4c^0_{4\;5}\\
	\\
	\partial b_1 &=c^0_{1\;2}+tc^0_{4\;5}\\
	\partial b_2 &=c^0_{3\;4}\\
	\partial b_3 &=c^0_{2\;3}+c^0_{4\;5}\\
	\partial b_4 &=c^0_{1\;2}+c^0_{3\;4}\\
	\partial b_5 &=b_2c^0_{4\;5}+c^0_{3\;5}\\
	\partial b_6 &=b_3c^0_{3\;4}+c^0_{2\;4}+c^0_{4\;5}b_2\\
	\partial b_7 &=b_4c^0_{2\;3}+c^0_{1\;3}+c^0_{3\;5}+c^0_{3\;4}b_3\\
	\partial b_8 &=b_6c^0_{4\;5}+b_3c^0_{3\;5}+c^0_{2\;5}+c^0_{4\;5}b_5\\
	\partial b_9 &=b_7c^0_{3\;4}+b_4c^0_{2\;4}+c^0_{1\;4}+c^0_{3\;4}b_6+c^0_{3\;5}b_2\\
	\partial b_{10} &= b_9c^0_{4\;5}+b_7c^0_{3\;5}+b_4c^0_{2\;5}+c^0_{1\;5}+c^0_{3\;4}b_8+c^0_{3\;5}b_5\\
\end{align*}

Then the cycle in the symplectic homology of the filling depicted in $\mathcal{D}(\Lambda)$ as described in the proof of Theorem \ref{thm:nonvanishingSH} is given by $(\check{c},\hat{c},\tau_1)$, where 
$$c = ta_1 - tb_3 +b_1 - b_4 + b_2.$$
We see that $\partial c = 0$.

\end{eg}

\section{The Main Theorem}

We will use the filling with nonzero symplectic homology of the previous section along with a result of McLean \cite{McL09} to prove Theorem \ref{thm:main}. McLean's result is based on the work of Viterbo in \cite{Vit99}, specifically \emph{Viterbo functoriality} which says that a codimension 0 exact embedding of a symplectic manifold with boundary into another induces a \emph{transfer map} on the symplectic homologies between them. More precisely,

\begin{thm}\cite{Vit99}\label{thm:Viterbo}
Suppose $(W,d\lambda)$ is an exact symplectic manifold with a Liouville vector field which is transverse at the boundary. Suppose $W_0\hookrightarrow W$ is an embedding of a compact codimension 0 submanifold. Then there exists a natural homomorphism
$$S\mathbb{H}_*(W,d\lambda)\to S\mathbb{H}_*(W_0,d\lambda).$$
Moreover, this map, along with the natural map on relative singular homology $H_*(W,\partial W)\to H_*(W_0,\partial W_0)$ forms the following commutative diagram:
$$\begin{tikzcd}
{H_{*+n}(W,\partial W)} \arrow[rr] \arrow[d] &  & {H_{*+n}(W_0,\partial W_0)} \arrow[d] \\
{S\mathbb{H}_*(W,d\lambda)} \arrow[rr]                &  & {S\mathbb{H}_*(W_0,d\lambda)}                
\end{tikzcd}$$
\end{thm}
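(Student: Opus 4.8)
The plan is to construct the transfer map at the chain level from Floer homology for Hamiltonians adapted to the nested pair, following Viterbo's original argument. Recall that $S\mathbb{H}_*(W,d\lambda)=\varinjlim_H HF_*(H)$, the direct limit over an increasing cofinal family of admissible Hamiltonians $H$ on the completion $\hat W = W\cup_{\partial W}(\partial W\times[1,\infty))$, where each $H$ is $C^2$-small and Morse on $W$ and linear of slope $\to\infty$ in the symplectization coordinate on the cylindrical end, with slopes chosen to avoid the Reeb periods of $\partial W$ so that no $1$-periodic orbits occur on the conical part. I would first use that $W_0$ is a Liouville subdomain, i.e.\ that $\partial W_0$ is of contact type, as holds for the exact codimension-$0$ embeddings under consideration; a collar of $\partial W_0$ is then identified with a piece of its negative symplectization by the Liouville flow.

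Next I would introduce a cofinal family of \emph{sandwich} (step-shaped) Hamiltonians $H_{\mu_0,\mu}$ on $\hat W$: small and Morse on the interior of $W_0$, radially increasing with slope $\mu_0$ across the collar of $\partial W_0$, equal to a large constant (perturbed to a small Morse function) on the shell $W\setminus W_0$, and radially increasing with slope $\mu$ on the end of $W$, with both slopes $\to\infty$ and all corner values chosen to miss the relevant Reeb periods. The $1$-periodic orbits then split into an inner family (constants in $W_0$ together with orbits near $\partial W_0$ detecting the Reeb orbits of $\partial W_0$) and an outer family (constants in the shell and orbits near $\partial W$). Taking the shell value large separates the two families in action, and the confinement of Floer cylinders established in the next step shows that the inner orbits span a \emph{quotient} complex of the Floer complex of $H_{\mu_0,\mu}$.

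The analytic heart is the integrated maximum principle (Viterbo; Abouzaid--Seidel; Cieliebak--Oancea): computing Floer cylinders with almost complex structures of contact type near $\partial W_0$ and $\partial W$, one shows such cylinders cannot cross these hypersurfaces in the outward direction, so a cylinder asymptotic at its positive end to an inner orbit stays inner and the differential cannot carry an inner orbit to an outer one. Hence the inner orbits form a quotient complex and projection onto it is a chain map. Near $W_0$ the sandwich Hamiltonian agrees with an admissible Hamiltonian for $W_0$, so this quotient computes $HF_*$ of $W_0$, and passing to the limit $\mu_0\to\infty$ identifies its homology with $S\mathbb{H}_*(W_0)$, while the full complex computes $S\mathbb{H}_*(W)$ as $\mu\to\infty$. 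The chain-level projection is compatible with continuation maps, so it descends to the asserted natural homomorphism $S\mathbb{H}_*(W,d\lambda)\to S\mathbb{H}_*(W_0,d\lambda)$; independence of the sandwich shape and of the slopes is checked by the usual continuation argument.

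Finally, for the square I would use that the map $H_{*+n}(W,\partial W)\to S\mathbb{H}_*(W)$ is the canonical map induced by continuation from $HF_*$ of a $C^2$-small Morse Hamiltonian, whose Floer homology is $H_{*+n}(W,\partial W)$ with the standard grading conventions, into the limit, and likewise for $W_0$; the top horizontal arrow is the topological restriction $H_{*+n}(W,\partial W)\to H_{*+n}(W_0,\partial W_0)$. On the smallest Hamiltonians the transfer projection restricts, on the constant-orbit (Morse) generators, to exactly this topological restriction, and the transfer commutes with the continuation maps into the two limits, so comparing the two composites through the small-Hamiltonian level shows the square commutes. The main obstacle is precisely the analytic input of the third step: shaping $H_{\mu_0,\mu}$ so that no orbits sit at the corners and the action windows separate, and proving the maximum principle that confines Floer trajectories, since without this confinement the inner orbits need not form a quotient complex and the transfer map would not even be defined; the grading shift by $n$ and naturality under iterated embeddings are secondary bookkeeping.
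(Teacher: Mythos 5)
This theorem is not proved in the paper at all: it is imported verbatim from Viterbo \cite{Vit99} (and used only through McLean's refinement, Theorem \ref{thm:McLean} of \cite{McL09}), so there is no in-paper argument to compare against. Judged against the known proof, your outline is essentially the canonical one: step-shaped (``sandwich'') Hamiltonians cofinal for both $W$ and $W_0$, separation of the $1$-periodic orbits into inner and outer families, the integrated maximum principle (Viterbo's original confinement argument, later streamlined by Abouzaid--Seidel and Cieliebak--Oancea) to show Floer cylinders cannot escape across the contact-type hypersurfaces so that the inner generators carry a sub- or quotient complex, passage to the double limit in the slopes, and comparison with the $C^2$-small Morse level --- where $HF_{*}$ is the Morse complex computing $H_{*+n}(W,\partial W)$ and the transfer visibly restricts to the topological restriction map, giving the commuting square. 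Two small points deserve flagging. First, the statement as reproduced in the paper omits the hypothesis that the embedding $W_0\hookrightarrow W$ be exact with $\partial W_0$ of contact type (equivalently, that $W_0$ is a Liouville subdomain); without this the step Hamiltonians and the maximum principle are unavailable, and you correctly supply this hypothesis, which is indeed satisfied in every application the paper makes. Second, whether the inner orbits span a quotient complex or a subcomplex depends on homological versus cohomological sign conventions for the action filtration; your phrasing commits to the homological convention, which is consistent with the $S\mathbb{H}_*$ notation here, but you should state the action inequality you use so the reader can verify that the projection, rather than the inclusion, is the chain map. With those caveats, the proposal is a correct reconstruction of Viterbo's proof, and you rightly identify the confinement lemma --- not the action bookkeeping --- as the analytic heart.
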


McLean checked that this transfer map was in fact a unital ring map and proved the following theorem (Cor 10.5 in \cite{McL09}):
\begin{thm}\cite{McL09}\label{thm:McLean} 
Let $X$ and $W$ be compact convex symplectic manifolds. Suppose $W$ is subcritical. Suppose $S\mathbb{H}(X)\neq0$. Then $X$ cannot be embedded in $W$ as an exact codimension 0 submanifold. In particular, if $H_1(X)=0$, then $X$ cannot be symplectically embedded into $W$.
\end{thm}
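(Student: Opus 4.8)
The plan is to argue by contradiction, combining the Viterbo transfer map of Theorem \ref{thm:Viterbo} with two further facts: the vanishing of symplectic homology for subcritical domains, and the multiplicativity of the transfer map. First I would recall that $S\mathbb{H}_*(X,d\lambda)$ carries the pair-of-pants product, making it a graded ring with a unit $1_X$, and that this unit is the image of the fundamental class $[X,\partial X]\in H_{2n}(X,\partial X)$ under the vertical map $H_{*+n}(X,\partial X)\to S\mathbb{H}_*(X,d\lambda)$ from the commutative diagram of Theorem \ref{thm:Viterbo}; the same applies to $W$. Next I would invoke Cieliebak's theorem that a subcritical convex (Weinstein) domain has vanishing symplectic homology: as $W$ is assembled from a ball by attaching handles of index strictly below $n=\tfrac12\dim W$, and $S\mathbb{H}(B^{2n})=0$, subcritical handle attachment yields $S\mathbb{H}(W,d\lambda)=0$.

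With these in hand, suppose for contradiction that $X$ embeds in $W$ as an exact codimension-0 submanifold. Theorem \ref{thm:Viterbo} then supplies a transfer homomorphism $S\mathbb{H}_*(W,d\lambda)\to S\mathbb{H}_*(X,d\lambda)$, and the part of the argument that does the real work is to show this map is a \emph{unital ring homomorphism}, carrying $1_W$ to $1_X$. Granting this, the contradiction is immediate: since $S\mathbb{H}(W,d\lambda)=0$ we have $1_W=0$, so $1_X=0$ in $S\mathbb{H}_*(X,d\lambda)$; but in a unital ring $1=0$ forces every element to vanish, giving $S\mathbb{H}(X,d\lambda)=0$ against the hypothesis $S\mathbb{H}(X)\neq 0$. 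For the final assertion, if $X$ embeds only symplectically but $H_1(X)=0$, then $\lambda_W|_X-\lambda_X$ is a closed $1$-form (both restrict to primitives of $\omega$) representing a class in $H^1(X;\RR)$; since $H_1(X)=0$ forces $H^1(X;\RR)=0$, this class vanishes, the embedding is automatically exact, and the preceding argument applies unchanged.

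I expect the main obstacle to be establishing that the transfer map is genuinely \emph{unital} rather than merely additive. This requires a careful study of the moduli spaces of Floer trajectories in the sandwiched Liouville region between $\partial X$ and $\partial W$: one must check that the transfer map intertwines the pair-of-pants products on the two sides and, crucially, that it commutes with the vertical maps $H_{*+n}(\,\cdot\,,\partial)\to S\mathbb{H}_*(\,\cdot\,)$ that produce the units. It is exactly this upgrade from bare functoriality to a unit-preserving ring map that converts Viterbo's transfer into an embedding obstruction; everything else in the proof is formal.
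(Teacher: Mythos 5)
Your proposal is correct and takes essentially the same route as the cited source: the paper states this result as Corollary 10.5 of \cite{McL09} without reproving it, and its one-line gloss identifies exactly the point you isolate, namely that McLean upgrades the Viterbo transfer of Theorem \ref{thm:Viterbo} to a \emph{unital} ring homomorphism, which together with Cieliebak's vanishing of symplectic homology for subcritical domains forces $1_X=0$ and hence $S\mathbb{H}(X)=0$. Your treatment of the final assertion (an embedding with $H_1(X)=0$ is automatically exact because $\lambda_W|_X-\lambda_X$ is closed and $H^1(X;\RR)=0$) also matches the standard argument behind the cited statement.
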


We prove the main theorem:

\newtheorem*{thm:main}{Theorem \ref{thm:main}}
\begin{thm:main}
	Let $U$ be the standard $tb=-1$ unknot. Let $\Lambda$ be a Legendrian knot satisfying $U\prec\Lambda\prec U$, and $\Lambda\neq U$. Then $\Lambda$ cannot be smoothly the closure of a 3-braid.
\end{thm:main}

\begin{proof}
We proceed by contradiction. Suppose $\Lambda\neq U$ is smoothly the closure of a 3-braid which satisfies $U\prec\Lambda\prec U$. $\Lambda$ must be quasipositive. $\Lambda$ has a positive transverse push off $\Lambda'$ of the same topological knot type. By Remark \ref{remark:quasipositive_transverse}, $\Lambda'$ is transversely isotopic to a quasipositive 3-braid. By Corollary \ref{cor:alg_length}, $\Lambda$ must have algebraic length 2. By Theorem \ref{thm:diagram}, $\mathcal{D}(\Lambda)$ is the Weinstein diagram of a filling of $\Sigma_2(\Lambda')$, call it $X_\Lambda$. By Theorem \ref{thm:nonvanishingSH}, $X_\Lambda$ has nonzero symplectic homology. 

Next, we'll show that $X_\Lambda$ embeds in $B^4$ as a codimension 0 exact submanfold. Recall the construction from the proof of Theorem \ref{thm:fillings_embed}. In particular, we will need the submanifold $V$ of $\Sigma_p(C')$, where $C'$ is the symplectic approximation of the Lagrangian concordance cylinder of $U\prec\Lambda\prec U$. Here we fix $p=2$. Then we have $\partial V = \Sigma_2(\Lambda')\cup S^3$. In the proof of Theorem \ref{thm:fillings_embed}, we showed that any 4-manifold constructed by gluing a filling of $\Sigma_2(\Lambda')$ to $V$ must embed in a blow-up of $B^4$.

Let $X$ be the branched double cover of $B^4$ branched over the symplectic disk bounding $\Lambda'$. Then the manifold we get by gluing $X$ to $V$ along $\Sigma_2(\Lambda')$ is $B^4$. Then we obtain the following Mayer-Vietoris sequence:
$$
\begin{tikzcd}
H_3(B^4) \arrow[r] & H_2(\Sigma_2(\Lambda')) \arrow[r] & H_2(X)\oplus H_2(V) \arrow[r] & H_2(B^4)
\end{tikzcd}
$$ 

$\Sigma_2(\Lambda')$ is a rational homology sphere, thus $H_2(\Sigma_2(\Lambda'),\QQ)=0$. Then since $H_3(B^4,\QQ)=H_2(X,\QQ)= H_2(B^4)=0$, we have that 
$$H_2(V,\QQ)=0.$$

Now instead take $W = X_\Lambda\cup V$ to be the manifold obtained by gluing $X_\Lambda$ to $V$.  We obtain the following Mayer-Vietoris sequence:
$$
\begin{tikzcd}
H_2(\Sigma_2(\Lambda')) \arrow[r] & H_2(X_\Lambda)\oplus H_2(V) \arrow[r] & H_2(W) \arrow[r] & H_1(\Sigma_2(\Lambda'))
\end{tikzcd}
$$

From $\mathcal{D}(\Lambda)$, we see that $X_\Lambda$ is constructed via attaching a single 1-handle and a single 2-handle to a 0-handle. The 2-handle is attached along a curve which runs along the 1-handle nontrivially. Thus $H_2(X_\Lambda,\QQ)=0$. And since $H_2(\Sigma_2(\Lambda'),\QQ)=H_2(V,\QQ)= H_1(\Sigma_2(\Lambda'),\QQ)=0$, 
$$H_2(W,\QQ)=0.$$ 
Thus $W$ must be minimal, so $W$ is $B^4$, which is subcritical. Since $X_\Lambda$ has a Weinstein structure, it is exact and convex.

This contradicts Theorem \ref{thm:McLean}. Thus if $\Lambda$ satisfies $U\prec\Lambda\prec U$, $\Lambda\neq U$, then $\Lambda$ is not smoothly the closure of a 3-braid. 
\end{proof}

Additionally the following result about contact embeddings follows from the proof of Theorem \ref{thm:main}. The double covers of $S^3$ branched over a quasipositive transverse knot $\Lambda'$ which is the closure of a 3-braid of algebraic length 2  form an infinite family of contact manifolds which are rational homology spheres but do not embed in $\RR^4$ as contact type hypersurfaces, motivated by the work of Mark and Tosun \cite{MarTos20}.

\newtheorem*{cor:not_hyper}{Corollary \ref{cor:not_hyper}}
\begin{cor:not_hyper}
	Let $\Sigma_2(\Lambda')$ be the double cover of $S^3$ branched over a quasipositive transverse knot which is the closure of a 3-braid of algebraic length 2. Suppose $\Lambda'$ is not the unknot. Then $\Sigma_2(\Lambda')$ does not embed as a contact type hypersurface in $\RR^4$.
\end{cor:not_hyper}

\begin{proof}
Suppose $\Sigma_2(\Lambda')$ be the double cover of $S^3$ branched over a quasipositive transverse knot $\Lambda'$ which is the closure of a 3-braid of algebraic length 2. Suppose $\Lambda'$ is not the unknot. Suppose $\Sigma_2(\Lambda')$ embeds as a contact type hypersurface in $\RR^4$. 
Then it bounds a codimension-0 symplectic submanifold of $\RR^4$, call it $X$. 

We now follow the same arguments as in the proof of Theorem \ref{thm:main}, to reach a contradiction. We have a Mayer-Vietoris sequence:
$$
\begin{tikzcd}
H_3(\RR^4) \arrow[r] & H_2(\Sigma_2(\Lambda')) \arrow[r] & H_2(X)\oplus H_2(\RR^4\setminus X) \arrow[r] & H_2(\RR^4)
\end{tikzcd}
$$ 
So $H_2(X)=0$. Consider $(\RR^4\setminus X)$  which has boundary $\Sigma_2(\Lambda')$. We can glue in the filling $X_\Lambda$ of $\Sigma_2(\Lambda')$ corresponding to the diagram $\mathcal{D}(\Lambda)$ of Theorem \ref{thm:diagram} along the boundary $\Sigma_2(\Lambda')$. Let $W\defeq (\RR^4\setminus X)\cup X_\Lambda$. By p. 311 of \cite{Gro85}, $W = \RR^4\#m\overline{\cptwo}$ for some $m\geq 0$. We have another Mayer-Vietoris sequence:
$$
\begin{tikzcd}
H_2(\Sigma_p(\Lambda')) \arrow[r] & H_2(X_\Lambda)\oplus H_2(B^4\setminus X) \arrow[r] & H_2(W) \arrow[r] & H_1(\Sigma_p(\Lambda')).
\end{tikzcd}
$$
Thus, $H_2(W)=0$ and we know that $W$ is $\RR^4$. Thus $X_\Lambda$ embeds as an exact codimension 0 submanifold of $W=\RR^4$. 

By Theorem \ref{thm:nonvanishingSH}, $X_\Lambda$ has nonzero symplectic homology. Thus by Theorem \ref{thm:McLean}, $X_\Lambda$ cannot embed in $\RR^4$, a contradiction.
\end{proof}

\begin{remark}\label{remark:only_some_ct_str}
The family of contact manifolds described by this corollary corresponds precisely to the Stein rational homology balls in the boundary of the Weinstein diagrams of Figure \ref{fig:d_Lambdaagain}. As contact surgery diagrams, we depict these manifolds in Figure \ref{fig:QHS_family}. We note that the methods in this paper exclude specifically the contact structures determined by the double branched covers from embedding as contact type hypersurfaces, and not necessarily all tight contact structures on these manifolds. 
\end{remark}

\begin{figure}
	\begin{center}
		\includegraphics[width=11 cm]{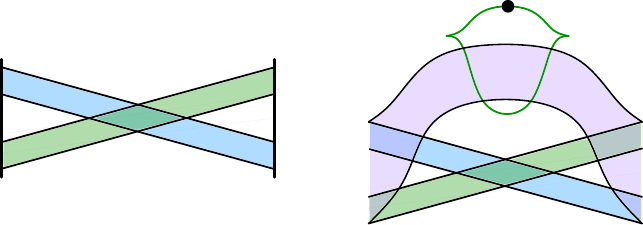}
		\caption{The Weinstein diagram of $X_{\Lambda}$ (left) and the corresponding surgery diagram of $\Sigma_2(\Lambda')$ (right). Here the shaded blue region represents $p$ parallel strands, green represents $q-p$ parallel strands, and purple represents $q$ parallel strands, where $0<p<q$ and $(p,q)=1$.}
		\label{fig:QHS_family}
	\end{center}
\end{figure}

To see that these $\Sigma_2(\Lambda')$ of Corollary \ref{cor:not_hyper} are indeed an infinite family, we distinguish infinitely many of them using their first homology, which we can compute from their contact surgery diagrams.

\begin{lemma}
There are infinitely many non homeomorphic manifolds which are double covers of $S^3$ branched over quasipositive transverse knots which are the closures of 3-braids of algebraic length 2.
\end{lemma}

\begin{figure}
	\begin{center}
		\includegraphics[width=11 cm]{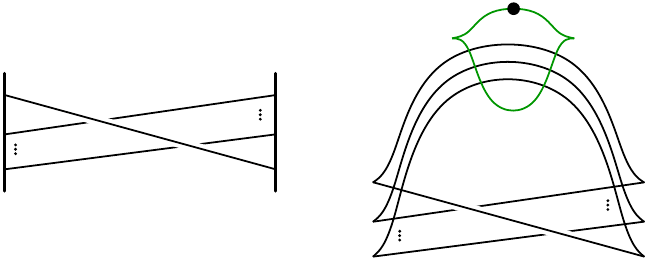}
		\caption[Diagrams of $X_{\Lambda_k}$ and $\Sigma_2(\Lambda'_k)$]{The Weinstein diagram of $X_{\Lambda_k}$ with $k$ strands entering the 1-handle (left) and the corresponding surgery diagram of $\Sigma_2(\Lambda'_k)$ (right).}
		\label{fig:q_squared}
	\end{center}
\end{figure}

\begin{proof}
Consider the 3-braids $\beta_k \defeq \sigma_1 \sigma_2^{-k}\sigma_1\sigma_2^{k}$ for $k\in\ZZ, k\geq 2$. Let $\Lambda'_k$ be a transverse knot which is the closure of $\beta_k$. By Proposition \ref{prop:OBD}, there is an open book decomposition of the double cover of $S^3$ branched over $\Lambda'_k$, $\Sigma_2(\Lambda'_k)$, with pages consisting of tori with one boundary component and monodromy $\phi=\tau_\alpha\tau_\gamma$ where $\alpha$ is a $(1,0)$ curve and $\gamma$ is a curve with slope $(1,k)$.

Then by Theorem \ref{thm:diagram}, $\Sigma_2(\Lambda'_k)$ has a filling $X_{\Lambda_k}$ with surgery diagram as in Figure \ref{fig:q_squared}. We can replace the 1-handle in this diagram with a 0-framed surgery on an unknot to obtain a surgery diagram of $\Sigma_2(\Lambda'_k)$, as in Figure \ref{fig:q_squared}. We call the unknot $U$ and the other knot $L$. We see that $U$ has linking number $k$ with $L$, so their linking matrix is 
$$\begin{bmatrix}
0 & k \\
k & tb(L)-1 
\end{bmatrix}$$
which has determinant $-k^2$, thus 
$$|H_1 (\Sigma_2(\Lambda'_k))|=k^2.$$
Thus, $\Sigma_2(\Lambda'_k)$ is not homeomorphic to $\Sigma_2(\Lambda'_j)$ for $j\neq k$.
\end{proof}

\begin{remark}
We note that the orientation reversed version of the family of manifolds depicted in Figure \ref{fig:q_squared} has been previously studied using different methods by Tosun \cite{Tos22} for $k\neq 2$ where it is shown that these manifolds do not embed smoothly in $\RR^4$, and by Nemirovski and Siegel \cite{NemSie16} for $k=2$ where it is shown that this manifold cannot be a contact type hypersurface. 
\end{remark}

\bibliography{references}
\bibliographystyle{amsalpha}
\end{document}